\theoremstyle{plain}
\newtheorem{theo}{Theorem}[section]
\newtheorem{lemm}[theo]{Lemma}
\newtheorem{prop}[theo]{Proposition}
\newtheorem{coro}[theo]{Corollary}
\newtheorem{thm}{Theorem}
\theoremstyle{remark}
\theoremstyle{definition}
\newtheorem{defi}[theo]{Definition}
\newtheorem{defis}[theo]{Definitions}
\newtheorem{rema}[theo]{Remark}
\newtheorem{ques}[theo]{Question}
\newtheorem{egs}[theo]{Examples}
\newcommand{\PGL}{\mathrm{PGL}}
\newcommand{\im}{{\bf i}}
\newcommand{\Bir}{\mathrm{Bir}}
\newcommand{\Aut}{\mathrm{Aut}}
\newcommand{\C}{\mathbb{C}}
\newcommand{\p}{\mathbb{P}}
\newcommand{\z}{\mathbb{Z}}
\newcommand{\SL}{\mathrm{SL}}  
\newcommand{\PSL}{\mathrm{PSL}}
\newcommand{\Pic}[1]{\mathrm{Pic}(#1)}
\title{Embeddings of $\mathrm{SL}(2,\z)$ into the Cremona group}
\author{J\'er\'emy Blanc}
\address{J\'er\'emy Blanc, Universit\"{a}t Basel, Mathematisches Institut, Rheinsprung $21$, CH-$4051$ Basel, Switzerland.}
\email{jeremy.blanc@unibas.ch}
\author{
Julie D\'eserti}
\address{Julie D\'eserti, Universit\"{a}t Basel, Mathematisches Institut, Rheinsprung $21$, CH-$4051$ Basel, Switzerland. {\it On leave from} Institut de Math\'ematiques de Jussieu, Universit\'e Paris $7$. France.}
\email{julie.deserti@unibas.ch, deserti@math.jussieu.fr}
\thanks{Both authors supported by the Swiss National Science Foundation grant no PP00P2\_128422 /1.}
\begin{document}
\maketitle\begin{center}{\today}\end{center}

\begin{abstract}
Geometric and dynamic properties of embeddings of $\mathrm{SL}(2,\mathbb{Z})$ into the Cremona group are studied. Infinitely many non-conjugate embeddings that preserve the type (i.e. that send elliptic, parabolic and hyperbolic elements onto elements of the same type) are provided. The existence of infinitely many non-conjugate elliptic, parabolic and hyperbolic embeddings is also shown. 

In particular, a group $G$ of automorphisms of a smooth surface $S$ obtained by blowing-up $10$ points of the complex projective plane is given. The group $G$ is isomorphic to $\mathrm{SL}(2,\mathbb{Z})$, preserves an elliptic curve and all its elements of infinite order are hyperbolic.

\noindent{\it 2010 Mathematics Subject Classification. ---  14E07 (primary),  14L30, 15B36 (se\-condary).}
\end{abstract}

\section{Introduction}

Our article is motivated by the following result on the embeddings of the groups~$\SL(n,\z)$ into the group $\mathrm{Bir}(\mathbb{P}^2)$ of birational maps of $\mathbb{P}^2(\mathbb{C})$: the group $\SL(n,\z)$ does not embed into $\mathrm{Bir}(\mathbb{P}^2)$ for $n\ge 4$ and  $\SL(3,\z)$ only embeds linearly (\emph{i.e.} in $\Aut(\mathbb{P}^2)=\mathrm{PGL}(3,\mathbb{C})$) into $\mathrm{Bir}(\mathbb{P}^2)$ up to conjugacy \cite[Theorem 1.4]{De}. 

It is thus natural to look at the embeddings of $\SL(2,\z)$ into $\mathrm{Bir}(\mathbb{P}^2)$. As $\SL(2,\z)$ has almost a structure of free group, it admits many embeddings of different type into $\mathrm{Bir}(\mathbb{P}^2)$, and it is not reasonable to look for a classification of \emph{all} embeddings. We thus focus on embeddings having some geometric properties; among them the most natural ones are the embeddings which \emph{ preserve the type} evoked by Favre in \cite[Question~4]{Fa}. 

The elements of $\SL(2,\z)$ are classified into elliptic, parabolic and hyperbolic elements, with respect to their action on the hyperbolic upper-plane (or similarly to their trace, \emph{see}~$\S\ref{SubSec:SL2Z}$). The Cremona group $\mathrm{Bir}(\mathbb{P}^2)$ naturally acts on a hyperbolic space of infinite dimension (\emph{see} \cite{Ma,Ca}), so there is a notion of elliptic, parabolic and hyperbolic elements in this group; this classification can also be deduced from the growth rate of degrees of iterates (\emph{see} \cite{DiFa} and $\S \ref{SubSec:Dynamic}$). Note that some authors prefer the term of loxodromic elements instead of hyperbolic elements (\emph{see for example} \cite[Proposition 2.16]{Andersen}).
 A morphism from $\SL(2,\z)$ to  $\mathrm{Bir}(\mathbb{P}^2)$ \emph{preserves the type} if it sends elliptic, parabolic and hyperbolic elements of $\SL(2,\z)$ on elements of~$\mathrm{Bir}(\mathbb{P}^2)$ of the same type. Up to now, the only known example is the classical embedding $\theta_s\colon\mathrm{SL}(2,\mathbb{Z})\to\mathrm{Bir}(\mathbb{P}^2)$, which associates to a matrix $M=~\left[
\begin{array}{cc}
a & b\\
c & d
\end{array}
\right]$ the birational map $\theta_s(M)$, given in affine coordinates by $(x,y)\dasharrow(x^ay^b,x^cy^d)$ (or written simply $(x^ay^b,x^cy^d)$). 
 In this article, we provide infinitely many non-conjugate embeddings that preserve the type (Theorem~$\ref{thm:preserve}$ below):

 Recall that the group $\mathrm{SL}(2,\mathbb{Z})$ is generated by the elements $R$
and $S$ given by
\begin{align*}
& R=\left[\begin{array}{cc}
1 & 1\\
0 & 1
\end{array}\right] && \text{and} && S=\left[\begin{array}{cc}
0 & 1\\
-1 & 0
\end{array}
\right].
\end{align*}

\begin{thm}$($see $\S\ref{Sec:Embeddtype})$ \label{thm:preserve}
Let $\varepsilon$ be a real positive number, and set
\begin{align*}
&\theta_{\varepsilon}(S)=\left(y,-x\right), && \theta_{\varepsilon}(R)=\left(\frac{x+\varepsilon y}{\varepsilon+xy}, \varepsilon y\right).
\end{align*}
Then $\theta_\varepsilon$ is an embedding of $\mathrm{SL}(2,\mathbb{Z})$ into the Cremona group that preserves the type.

Furthermore, if $\varepsilon$ and $\varepsilon'$ are two real positive numbers such that $\varepsilon\varepsilon'\not=1$, then $\theta_\varepsilon(\mathrm{SL}(2,\mathbb{Z}))$ and $\theta_{\varepsilon'}(\mathrm{SL}(2,\mathbb{Z}))$ are not conjugate in $\mathrm{Bir}(\mathbb{P}^2)$. 

The standard embedding $\theta_s$ is conjugate to $\theta_1$.
\end{thm}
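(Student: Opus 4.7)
\emph{Homomorphism.} First I would use the presentation $\SL(2,\z)=\langle R,S\mid S^{4}=\id,\,(RS)^{3}=S^{2}\rangle$ and verify these two relations on the images. The relation $\theta_\varepsilon(S)^{\circ 4}=\id$ follows from $(y,-x)^{\circ 2}=(-x,-y)$, while $\theta_\varepsilon(RS)^{\circ 3}=(-x,-y)=\theta_\varepsilon(S)^{\circ 2}$ is a direct, if tedious, rational computation.

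\emph{Type preservation.} Elliptic (torsion) elements of $\SL(2,\z)$ have torsion images under any homomorphism, which are automatically elliptic in $\Bir(\p^2)$. Every parabolic element of $\SL(2,\z)$ is $\SL(2,\z)$-conjugate to some $\pm R^{n}$ with $n\ne 0$, so it suffices to check that $\theta_\varepsilon(R)^{n}$ is parabolic. The map $\theta_\varepsilon(R)$ preserves the rational fibration $(x,y)\mapsto y$ with action $y\mapsto\varepsilon y$ on the base; checking polynomial (and not exponential) growth of the degrees of its iterates, together with infinite order, yields parabolicity via the Diller--Favre classification. The hyperbolic case is the main obstacle: for $M\in\SL(2,\z)$ hyperbolic, the elliptic and parabolic types for $\theta_\varepsilon(M)$ must both be ruled out. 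A hyperbolic element of $\SL(2,\z)$ preserves no rational or elliptic pencil, whereas every elliptic or parabolic element of $\Bir(\p^2)$ of infinite order does preserve such a pencil; exploiting this dichotomy via the action of $\theta_\varepsilon(\SL(2,\z))$ on the Picard--Manin hyperbolic space forces hyperbolicity.

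\emph{Injectivity and non-conjugacy.} Granted type preservation, the kernel of $\theta_\varepsilon$ is a normal torsion subgroup of $\SL(2,\z)$, hence is trivial or $\{\pm I\}$; and $\theta_\varepsilon(-I)=(-x,-y)\ne\id$ gives injectivity. For the non-conjugacy, suppose $\phi\in\Bir(\p^2)$ realizes $\phi\,\theta_\varepsilon(\SL(2,\z))\,\phi^{-1}=\theta_{\varepsilon'}(\SL(2,\z))$, inducing an automorphism $\alpha$ of $\SL(2,\z)$. Since $R$ generates a maximal infinite cyclic subgroup of its centralizer modulo $\pm I$, any such $\alpha$ must send $R$ to $\pm hR^{\pm 1}h^{-1}$ for some $h\in\SL(2,\z)$; hence $\phi\,\theta_\varepsilon(R)\,\phi^{-1}$ is $\Bir(\p^2)$-conjugate to $\pm\theta_{\varepsilon'}(R)^{\pm 1}$. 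A direct computation gives that the Jacobian of $\theta_\varepsilon(R)$ at its fixed point $(0,0)$ has eigenvalues $\{\varepsilon,\varepsilon^{-1}\}$. Since the multiset of eigenvalues at a fixed point is a birational conjugation invariant, one obtains $\{\varepsilon,\varepsilon^{-1}\}=\{\pm(\varepsilon')^{\pm 1}\}$. Positivity of $\varepsilon,\varepsilon'$ excludes the minus sign, whence $\varepsilon=\varepsilon'$ or $\varepsilon\varepsilon'=1$, contradicting the hypothesis $\varepsilon\varepsilon'\ne 1$ (for distinct $\varepsilon,\varepsilon'$).

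\emph{Standard embedding.} The identity $\tanh(\tfrac{1}{2}\log u)=(u-1)/(u+1)$ and the $\tanh$ addition formula suggest the birational change of variables $\psi\colon(u,v)\mapsto\bigl(\tfrac{u-1}{u+1},\tfrac{v-1}{v+1}\bigr)$; direct verification yields $\psi\,\theta_s(R)\,\psi^{-1}=\theta_1(R)$ and $\psi\,\theta_s(S)\,\psi^{-1}=\theta_1(S)$, establishing the conjugacy of $\theta_s$ and $\theta_1$.
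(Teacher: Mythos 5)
The central gap is in your type-preservation step, and it is fatal as written. For a hyperbolic matrix $M$ you argue that since $M$ preserves no pencil for its action on $\mathbb{H}$, and since elliptic/parabolic elements of infinite order in $\Bir(\p^2)$ preserve pencils, $\theta_\varepsilon(M)$ must be hyperbolic. But the type of $M$ in $\SL(2,\z)$ is a property of one particular action and imposes no constraint by itself on the invariant pencils of $\theta_\varepsilon(M)$; if this dichotomy argument were valid it would apply verbatim to \emph{every} embedding of $\SL(2,\z)$, whereas the paper itself constructs elliptic embeddings (\S\ref{Sec:ReprTypEllipt}) and parabolic embeddings (\S\ref{Sec:ReprTypParab}) in which hyperbolic matrices go to elliptic or parabolic maps. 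A telling symptom is that your argument never uses the hypothesis $\varepsilon>0$, which is exactly where the real work lies. The paper's proof computes the degree growth directly: viewing the maps on $\p^1\times\p^1$, it shows (Lemma~\ref{Lem:BasePtsThetaEpsilon}) that the base-points of $\theta_\varepsilon(R_1^{\pm1}),\theta_\varepsilon(R_2^{\pm1})$ lie in the region $xy<0$ while their images of words in $R_1,R_2$ stay in $xy>0$ --- this is where positivity of $\varepsilon$ enters --- so that no cancellation of components occurs when composing, and then proves by induction that $\theta_\varepsilon(M)$ has the \emph{same quadridegree} $(\vert a\vert,\vert b\vert,\vert c\vert,\vert d\vert)$ as $\theta_s(M)$ for every $M$. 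Type preservation (and injectivity) then follows from the known fact that $\theta_s$ preserves the type. Your parabolic step has the same defect in miniature: infinite order plus preservation of a rational fibration does not imply parabolicity (the map $(x,2y)$ is elliptic of infinite order and preserves a fibration); you must exhibit unbounded, linear growth of $\deg\theta_\varepsilon(R)^n$, which again requires the non-cancellation argument.

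Two smaller points. In the non-conjugacy step, your reduction of $R$ to $\pm hR^{\pm1}h^{-1}$ is fine and matches the paper's ``parabolic element with no root,'' and the invariant you extract ($\{\varepsilon,\varepsilon^{-1}\}$) is the right one; but ``the multiset of Jacobian eigenvalues at a fixed point is a birational conjugation invariant'' is false in general (the conjugating map may be undefined at the fixed point, blow it up, or contract a curve through it, and the conjugate map may have several fixed points with different multipliers). The paper makes the same number rigorous by noting that $\theta_\varepsilon(R)$ preserves a \emph{unique} rational fibration $(x,y)\mapsto y$, on whose base it acts by $y\mapsto\varepsilon y$; uniqueness forces any conjugation to intertwine the base actions inside $\PGL(2,\C)$, where the conjugacy class is $\{\varepsilon,\varepsilon^{-1}\}$. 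Finally, your verification that $\theta_s$ is conjugate to $\theta_1$ via $(u,v)\mapsto\bigl(\frac{u-1}{u+1},\frac{v-1}{v+1}\bigr)$ is exactly the paper's computation and is correct.
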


This family of embeddings is a first step in the classification of all embeddings of~$\SL(2,\z)$ preserving the type. We do not know if other embeddings exist (except one special embedding $\theta_{-}$ described in $\S\ref{ThetaMo}$ which is a "twist" of the standard embedding $\theta_s$ defined by: $\theta_{-}(S)=\theta_s(S)=(y,\frac{1}{x})$ and $\theta_{-}(R)=(xy,-y)\not=\theta_s(R)=(xy,y)$), in particular if it is possible to find an embedding where the parabolic elements act by preserving elliptic fibrations.

\begin{ques}\label{Ques:OtherEmbedd}
Does there exist an embedding of ~$\SL(2,\z)$ into $\Bir(\p^2)$ that preserves the type and which is not conjugate to $\theta_{-}$ or to some $\theta_{\varepsilon}$?
\end{ques}

The last two assertions of  Theorem~\ref{thm:preserve} yield to the following question: 
\begin{ques}
Is the embedding $\theta_{-}$ rigid? \emph{i.e.} not extendable to a one parameter family of non conjugate embeddings ?
\end{ques}

Note that some morphisms $\SL(2,\z)\to \Bir(\p^2)$ preserving the type have been described (\cite[page~9]{Fa}, \cite{CaLo} and \cite{Go}), but that these ones are not embedding, the central involution acting trivially. See $\S \ref{Subsec:Standard}$ for more details.

\bigskip

One can also consider elliptic, parabolic and hyperbolic embeddings of  $\SL(2,\z)$ into  $\mathrm{Bir}(\mathbb{P}^2)$. An embedding~$\theta$ of $\mathrm{SL}(2,\mathbb{Z})$ into the Cremona group is said to be \emph{elliptic} if each element of $\mathrm{im}\,\theta$ is elliptic; $\theta$ is \emph{parabolic} (respectively \emph{hyperbolic}) if each element of infinite order of $\mathrm{im}\,\theta$ is parabolic (respectively hyperbolic). 

In Sections~\ref{Sec:ReprTypEllipt},~\ref{Sec:ReprTypParab} and~\ref{Sec:ReprTypHyperZ}, we prove the existence of an infinite number of non-conjugate elliptic, parabolic and hyperbolic embeddings (\emph{see} Propositions~\ref{ellipticemb},~\ref{parabolicemb1},~\ref{parabolicemb2} and Corollary~\ref{CoroHyp}). It is  possible to find many other such embeddings; we only give a simple way to construct infinitely many of each family.

\bigskip

One can then ask if it is possible to find an embedding of $\SL(2,\z)$ into the Cremona group which is \emph{regularisable}, \emph{i.e.}\ which comes from an embedding into the group of automorphisms of a projective rational surface. It is easy to construct elliptic embeddings which are regularisable (see Section~\ref{Sec:ReprTypEllipt}). In Section~\ref{Sec:invfixbourbell}, we give a way to construct infinitely many hyperbolic embeddings of $\SL(2,\z)$ into the Cremona group which are re\-gularisable, and each of the groups constructed moreover preserves an elliptic curve (one fixing it pointwise). The existence of regularisable embeddings which preserve the type is still open (and should contain parabolic elements with quadratic growth of degree).

Note that the existence of hyperbolic automorphisms preserving an elliptic curve was not clear. In \cite[Theorem 1.1]{Pan}, it was proved that a curve preserved by an hyperbolic element of $\Bir(\p^2)$ has geometric genus $0$ or $1$;  examples of genus $0$ (easy to create by blowing-up) were provided, and the existence of genus $1$ curves invariant was raised (\emph{see}~\cite[page 443]{Pan}). The related question of the existence of curves of arithmetic genus~$1$ preserved by hyperbolic automorphisms of rational surfaces was also raised two years after in \cite[page 2987]{DJS}.
In \cite{Mc}, the author constructs hyperbolic automorphisms of rational surfaces which correspond to Coxeter elements (any hyperbolic automorphism of a rational surface corresponds to an element of the Weyl group associated to the surface), that preserve a cuspidal (resp.~nodal) curve. However, a general automorphism of a rational surface corresponding to a Coxeter element is hyperbolic but does not preserve any curve (\cite{BK2}).

The following statement yields existence of a group of automorphisms preserving a (smooth) elliptic curve such that every non-periodic element is hyperbolic. This is also possible with free groups (\emph{see} \cite[Remark 3.2]{CantatMathRes} and \cite{BlaMich}), but the construction is harder with more complicated groups like $\SL(2,\z)$. The method that we describe in Section~$\ref{Sec:invfixbourbell}$ should be useful to create other groups generated by elements of finite order. 

\begin{thm}\label{thm4}
There exist hyperbolic embeddings $\theta_{h,1},\theta_{h,2},\theta_{h,3}$ of $\mathrm{SL}(2,\mathbb{Z})$ into $\mathrm{Bir}(\mathbb{P}^2)$ such that:
\begin{itemize}
\item for each $i$, the group $\theta_{h,i}$ preserves a smooth cubic curve $\Gamma\subset \p^2$;
\item the action of $\theta_{h,1}$ on $\Gamma$ is trivial, the action of $\theta_{h,2}$ on $\Gamma$ is generated by a translation of order $3$ and the action of $\theta_{h,3}$ on $\Gamma$ is generated by an automorphism of order $3$ with fixed points;

\item for $i=1,2,3$, the blow-up $X_i\to \p^2$ of respectively $12,10,10$ points of $\Gamma$ conjugates $\theta_{h,i}(\mathrm{SL}(2,\mathbb{Z}))$ to a subgroup of automorphisms of $X_i$. The strict transform~$\widetilde{\Gamma}$ of $\Gamma$ on $X_i$ is the only invariant curve; in particular the orbit of any element of~$X_i\backslash \widetilde{\Gamma}$ is either finite or dense in the Zariski topology.
\end{itemize}

Moreover, in cases $i=1,2$, we can choose $\Gamma$ to be any smooth cubic curve, and this yields infinitely many hyperbolic embeddings of $\mathrm{SL}(2,\mathbb{Z})$ into $\mathrm{Bir}(\mathbb{P}^2)$, up to conjugacy.
\end{thm}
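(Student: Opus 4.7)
The plan is to realise each $\theta_{h,i}$ by exhibiting two explicit birational self-maps of $\p^2$ preserving a smooth cubic $\Gamma$ and then invoking the amalgamated-product presentation
\[
\SL(2,\z)\;\cong\;\z/4\z \,*_{\z/2\z}\, \z/6\z,
\]
given by the generator $S$ of order $4$ and $SR$ of order $6$, whose common square and cube is the central involution $S^2=(SR)^3=-\mathrm{Id}$. It therefore suffices to produce, for each~$i$, two maps $\sigma_i,\tau_i\in\Bir(\p^2)$ satisfying $\sigma_i^4=\tau_i^6=\mathrm{id}$ and $\sigma_i^2=\tau_i^3$, both preserving~$\Gamma$ and inducing the prescribed action on it (trivial for $i=1$, translation of order $3$ for $i=2$, order-$3$ automorphism with a fixed point for~$i=3$). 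Natural candidates are built from an automorphism of $\p^2$ preserving $\Gamma$ and of the prescribed order on $\Gamma$, twisted by a Bertini- or Geiser-type involution associated to a set of base-points of $\Gamma$; such involutions fix $\Gamma$ pointwise, so they change the degree without altering the restriction to~$\Gamma$.

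Next I would pin down the regularising surface $X_i$. The $\sigma_i$ and $\tau_i$ have well-defined base loci lying on $\Gamma$; the first step is to choose the starting automorphism so that all base-orbits are finite, then close them up under the action of the group $\langle\sigma_i,\tau_i\rangle$ to obtain a finite set $B_i\subset\Gamma$ invariant under both generators. Blowing up $B_i$ kills the indeterminacies simultaneously; an Euler-characteristic or $K_{X_i}^2$ count based on the constraints at each fixed point of $\tau_i$ and on the orbit lengths imposed by the actions on $\Gamma$ should force $|B_i|=12,10,10$ in the three cases. Since $\sigma_i^4=\tau_i^6=\mathrm{id}$ and $\sigma_i^2=\tau_i^3$ hold already in $\Bir(\p^2)$, they lift to the same relations in $\Aut(X_i)$, so by the universal property of the amalgamated product one obtains a homomorphism $\theta_{h,i}\colon\SL(2,\z)\to\Aut(X_i)\subset\Bir(\p^2)$.

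The main technical step is to check that $\theta_{h,i}$ is injective and that every element of infinite order in its image is hyperbolic. By the normal-form theorem for amalgamated products, injectivity is equivalent to showing that any alternating word in non-trivial elements of $\z/4\z$ and $\z/6\z$ is non-trivial in $\Aut(X_i)$; I would do this by studying the induced action on $\mathrm{Pic}(X_i)$ (equipped with the intersection form of signature $(1,\mathrm{rk}-1)$) and showing that such a word acts with spectral radius strictly greater than $1$. This also yields hyperbolicity, because an element whose action on $\mathrm{Pic}(X_i)$ has spectral radius $>1$ is automatically hyperbolic in the sense of \cite{DiFa}, while an element of finite order in $\Aut(X_i)$ is elliptic. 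Combining the two dichotomies, every element of infinite order is hyperbolic.

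Finally I would verify that $\widetilde{\Gamma}$ is the only invariant curve and derive the non-conjugacy statement. Any other invariant curve would be preserved by all the hyperbolic elements of $\theta_{h,i}(\SL(2,\z))$; by the result of \cite{Pan} recalled in the introduction, such a curve has geometric genus $\leq 1$, and a direct class-intersection computation on $X_i$ rules out further curves once $B_i$ has been fixed. For the last assertion, in cases $i=1,2$ the construction works for \emph{every} smooth cubic $\Gamma$, so the $j$-invariant of $\Gamma$ varies continuously; since $\widetilde{\Gamma}$ is the unique invariant curve, its $j$-invariant is a conjugacy invariant of $\theta_{h,i}(\SL(2,\z))$ in $\Bir(\p^2)$, whence infinitely many non-conjugate embeddings. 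The hardest part will be the orbit analysis that forces exactly $12$ (resp.~$10$) blow-ups and simultaneously guarantees that the resulting representation on $\mathrm{Pic}(X_i)$ is hyperbolic on every non-torsion element; the finite-order constraints on $\Gamma$ leave little room, and getting the counting to close up correctly is where the delicate geometry lives.
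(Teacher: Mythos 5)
Your overall architecture matches the paper's: both realise $\SL(2,\z)\cong\z/4\z*_{\z/2\z}\z/6\z$ by producing an element of order $4$ and one of order $6$ whose square and cube coincide with a common central involution fixing an elliptic curve pointwise, both regularise on a blow-up of points of the cubic, and both prove injectivity and hyperbolicity simultaneously by showing that every alternating word has spectral radius $>1$ on the Picard lattice (the paper does this by an explicit induction on word length, propagating inequalities on the coordinates of the image of a positive-square class). The uniqueness of the invariant curve is also handled the same way in spirit: the invariant part of the relevant sublattice reduces to multiples of $K$.

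However, there is a genuine gap at the step you dismiss as ``natural candidates.'' Producing $\sigma$ of order $4$ and $\tau$ of order $6$ with $\sigma^2=\tau^3$ \emph{exactly} in $\Bir(\p^2)$, with $\sigma^2$ fixing an elliptic curve pointwise and with no hidden relations, is the hard core of the paper, and your proposed mechanism does not work as stated: a Bertini (resp.\ Geiser) involution fixes pointwise a curve of genus $4$ (resp.\ $3$), not the cubic $\Gamma$, and the paper's own Lemma in \S\ref{involutioncentrale} shows such involutions have finite centraliser and hence cannot even be the image of $S^2$; the involution needed here is of de Jonqui\`eres type of degree $3$. Moreover the paper's order-$6$ and order-$4$ elements naturally arise as automorphisms of \emph{different} minimal del Pezzo surfaces (degree $1$ or $3$ for $\alpha$, degree $2$ for $\beta$, with $\Pic{X}^{\alpha}$ and $\Pic{Y}^{\beta}$ of rank $1$), so one cannot simply write both down on $\p^2$: the construction descends $\alpha^3$ and $\beta^2$ to their minimal models, which are degree-$4$ del Pezzo surfaces carrying exactly two invariant conic bundles, and then glues via a birational map $X_4\dasharrow Y_4$ given by two elementary links centred at two points $q_1,q_2$ of the elliptic curve chosen generically (Lemmas~\ref{Lem:dp4fibres} and~\ref{LemmConj}). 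That genericity is what makes the $12$/$10$/$10$ blown-up points distinct and what feeds the explicit matrices used in the hyperbolicity induction; without this gluing step your sketch has no way to guarantee either the relation $\sigma^2=\tau^3$ or the absence of further relations. Your closing argument (varying the $j$-invariant of the unique invariant curve to get infinitely many conjugacy classes) is correct and is essentially what the paper intends.
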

\begin{rema}
In $\theta_{h,1}$, $\theta_{h,2}$, $\theta_{h,3}$, the letter $h$ is no parameter but only means "hyperbolic", to distinguish them from the other embeddings $\theta_s$, $\theta_{-}$ and $\{\theta_{\epsilon}\}_{\epsilon\in \mathbb{R}}$, defined above.
\end{rema}
It could be interesting to study more precisely the orbits of the action of the above groups, in particular to answer the following questions:
\begin{ques}
Are the typical orbits of $\theta_{h,i}$ dense in the transcendental topology ?\end{ques}
\begin{ques}Are there some finite orbits in $X_i\backslash\widetilde{\Gamma}$?\end{ques}

We finish this introduction by mentioning related results.

\medskip

The statement of \cite[Theorem 1.4]{De} for $\SL(3,\mathbb{Z})$ was generalised in \cite{Ca}, where it is proven that any finitely generated group having Kazhdan's pro\-perty~(T) only embeds linearly into $\Bir(\p^2)$ (up to conjugation).
\medskip

Let us also mention \cite[Theorem A]{CaLa} which says that if a lattice $\Gamma$ of a simple Lie group~$G$ embeds into the group $\Aut(\C^2)$, then $G$ is isomorphic to $\mathrm{PSO}(1,n)$ or $\mathrm{PSU}(1,n)$ for some~$n$. If the embedding is not conjugate to a subgroup of the affine group, the only possibility is $G\simeq \mathrm{PSO}(1,2)\simeq \mathrm{PSL}(2,\mathbb{R})$, this latter case being intensively studied  in~\cite{CaLa}.

Note that our techniques heavily use the special structure of $\SL(2,\mathbb{Z})$, and one could ask similar questions for any lattice of $\mathrm{GL}(2,\mathbb{R})$ or $\mathrm{PGL}(2,\mathbb{R})$; the behaviour and results could be very different.

\medskip

{\it Acknowledgements.} The authors would like to thank Charles Favre  for interesting comments and suggestions, and Pierre de la Harpe for interesting discussions. Thanks also to the referees for their helpful remarks and corrections.

\section{Some reminders on $\mathrm{SL}(2,\mathbb{Z})$ and $\mathrm{Bir}(\mathbb{P}^2)$}\label{premieresproprietes}

\subsection{About $\mathrm{SL}(2,\mathbb{Z})$}\label{SubSec:SL2Z}

Division algorithm implies that the group $\mathrm{SL}(2,\mathbb{Z})$ is generated by the elements $R$
and $S$ given by
\begin{align*}
& R=\left[\begin{array}{cc}
1 & 1\\
0 & 1
\end{array}\right] && \text{and} && S=\left[\begin{array}{cc}
0 & 1\\
-1 & 0
\end{array}
\right].
\end{align*}

Remark that $R$ is of infinite order and $S$ of order $4$. The square of $S$ generates the center of $\mathrm{SL}(2,\mathbb{Z})$. Moreover
\begin{align*}
& RS=\left[\begin{array}{cc}
-1 & 1\\
-1 & 0
\end{array}
\right] && \text{and} && SR=\left[\begin{array}{cc}
0 & 1 \\
-1 & -1
\end{array}
\right]
\end{align*}
are conjugate by $S$ and both have order $3$.

A presentation of $\mathrm{SL}(2,\mathbb{Z})$ is given by $$\langle R,\,S\,\vert\, S^4=(RS)^3=1,\,S^2(RS)=(RS)S^2\rangle$$
(\emph{see for example}~\cite[Chapter 8]{Newman}). This implies that the quotient of $\mathrm{SL}(2,\mathbb{Z})$ by its center is a free product of $\mathbb{Z}/2\mathbb{Z}$ and $\mathbb{Z}/3\mathbb{Z}$
generated by the classes~$[S]$ of $S$ and $[RS]$ of $RS$ 
$$\mathrm{PSL}(2,\z)=\langle [S],\,[RS]\,\vert\, [S]^2=[RS]^3=1\rangle.$$

\subsection{Dynamic of elements of $\SL(2,\z)$}
Recall that the group $\SL(2,\mathbb{R})$ acts on the upper half plane
$$\mathbb{H}=\{x+ \im y\in \mathbb{C}\ |\ x,y\in \mathbb{R}, y>0\}$$  
by M\"obius transformations:
\begin{align*}
&\mathrm{SL}(2,\mathbb{R})\times \mathbb{H}\to \mathbb{H},
&& \left(\left[
\begin{array}{cc}
a & b\\
c & d
\end{array}
\right],z\right)\mapsto\frac{az +b}{cz+d}.
\end{align*}
The hyperbolic structure of $\mathbb{H}$ being preserved, this yields to a natural notion of \emph{elliptic}, \emph{parabolic}, and \emph{hyperbolic} elements of $\SL(2,\mathbb{R})$, and thus to elements of $\SL(2,\mathbb{Z})$ (as in \cite[II.8]{Iversen}).

\medskip

If $M$ is an element of $\mathrm{SL}(2,\mathbb{Z})$, we can be more precise and check the following easy observations:
\begin{itemize}
\item  $M$ is \emph{elliptic} if and only if $M$ has finite order;

\item $M$ is \emph{parabolic} (respectively \emph{hyperbolic}) if and only if $M$ has infinite order and its trace is $\pm 2$ (respectively $\not=\pm 2$).
\end{itemize}

Up to conjugacy the elliptic elements of $\mathrm{SL}(2,\mathbb{Z})$ are
\begin{align*}
&\left[\begin{array}{cc}
-1 & 0\\
0 & -1
\end{array}\right],
&&
\left[\begin{array}{cc}
0 & 1\\
-1 & -1
\end{array}\right],
&&
\left[\begin{array}{cc}
0 & 1\\
-1 & 0
\end{array}\right],
&&
\left[\begin{array}{cc}
0 & -1\\
1 & 0
\end{array}\right],
&&
\left[\begin{array}{cc}
0 & -1\\
1 & 1
\end{array}\right];
\end{align*}

in particular an element of finite order is of order $2$, $3$, $4$ or $6$.

\smallskip

A parabolic element of $\mathrm{SL}(2,\mathbb{Z})$ is up to conjugacy one of the following
\begin{align*}
&\left[\begin{array}{cc}
1 & a\\
0 & 1
\end{array}\right],&&
\left[\begin{array}{cc}
-1 & a\\
0 & -1
\end{array}\right], &&a\in\mathbb{Z}.
\end{align*}

\subsection{Cremona group and dynamic of its elements}\label{SubSec:Dynamic}
Let us recall the following classical definitions.

\begin{defis}
A \emph{ rational map} of the projective plane into itself is a map of the follo\-wing type
\begin{align*}
&f\colon\mathbb{P}^2(\mathbb{C})\dashrightarrow\mathbb{P}^2(\mathbb{C}), &&
(x:y:z)\dashrightarrow(f_0(x,y,z):f_1(x,y,z):f_2(x,y,z)),
\end{align*}

\noindent where the $f_i$'s are homogeneous polynomials of the same degree without common factor. The \emph{ degree} of $f$ is by definition: $\deg f=~\deg f_i$. A \emph{birational map} $f$ is a rational map that admits a rational inverse. We denote by $\mathrm{Bir}(\mathbb{P}^2)$ the group of birational maps of the projective plane into itself; $\mathrm{Bir}(\mathbb{P}^2)$ is also called the \emph{ Cremona group}.
\end{defis}

The degree is not a birational invariant; if $f$ and $g$ are in $\mathrm{Bir}(\mathbb{P}^2)$, then in general $\deg (gfg^{-1})\not=\deg f$ . Nevertheless there exist two strictly positive constants $a,b\in \mathbb{R}$ such that for all $n$ the following holds $$a\deg f^n\leq\deg (gf^ng^{-1})\leq b\deg f^n.$$ In other words the degree growth is a birational invariant; so we introduce the following notion (\cite{Fr, RS}). 

\begin{defi}
Let $f$ be a birational map. The \emph{ first dynamical degree} of  $f$ is defined by $$\lambda(f)=\lim (\deg f^n)^{1/n}.$$
\end{defi}

There is a classification of birational maps of $\p^2$ up to birational conjugation.

\begin{theo}[\cite{Gi, DiFa}]
Let $f$ be an element of $\mathrm{Bir}(\mathbb{P}^2)$. Up to birational conjugation, exactly one of the following holds. 
\begin{itemize}
\item The sequence $(\deg f^n)_{n\in \mathbb{N}}$ is bounded, $f$ is an automorphism on some projective rational surface and an iterate of~$f$ is an automorphism isotopic to the identity;

\item the sequence $(\deg f^n)_{n\in \mathbb{N}}$ grows linearly, and $f$ preserves a rational fibration. In this case $f$ cannot be conjugate to an automorphism of a projective surface;

\item the sequence $(\deg f^n)_{n\in \mathbb{N}}$ grows quadratically, and $f$ is conjugate to an automorphism preserving an elliptic fibration; 

\item  the sequence $(\deg f^n)_{n\in \mathbb{N}}$ grows exponentially.
\end{itemize}
\medskip

In the second and third case, the invariant fibration is unique. In the first three cases~$\lambda(f)$ is equal to $1$, in the last case $\lambda(f)$ is strictly greater than $1$.
\end{theo}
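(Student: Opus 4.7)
\medskip

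\noindent\textbf{Proof plan (following Gizatullin and Diller--Favre).}

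The core idea is to move from the birational world to a linear/geometric one by letting $f$ act on the \emph{Picard--Manin space} $\mathcal{Z}(\p^2)$, obtained as the direct limit of $\mathrm{Pic}(X)$ over all smooth projective surfaces $X\to \p^2$. The intersection form extends to $\mathcal{Z}(\p^2)$ as a form of signature $(1,\infty)$, and $\mathrm{Bir}(\p^2)$ acts by isometries (\emph{see} \cite{Ma,Ca}). Let $e_0\in\mathcal{Z}(\p^2)$ be the class of a line in $\p^2$. By the projection formula applied to a minimal resolution of $f^n$, one has $\deg f^n=(f^n)^*(e_0)\cdot e_0$. Thus the growth of $\deg f^n$ is governed by the asymptotic behaviour of the orbit of $e_0$ under the linear operator $f^*$ acting on the hyperbolic cone $\{v\mid v\cdot v\ge 0,\ v\cdot e_0>0\}$.

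The first step is to classify isometries of this hyperbolic space. An isometry falls into exactly one of three classes: \emph{elliptic} (fixes a class of positive self-intersection, hence preserves a finite-dimensional positive subspace on which it acts by a rotation), \emph{parabolic} (no fixed class of positive self-intersection, but a unique isotropic fixed ray, on which it has a single Jordan block of size $1$ or $3$), and \emph{loxodromic} (two isotropic fixed rays with eigenvalues $\lambda>1$ and $1/\lambda$). Translating this dichotomy through $\deg f^n=(f^n)^*e_0\cdot e_0$ gives directly the four degree behaviours in the statement: bounded in the elliptic case, linear or quadratic in the parabolic case (according to the size of the Jordan block), and exponential of rate $\lambda(f)=\lambda>1$ in the loxodromic case. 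This also shows that $\lambda(f)=1$ in the first three cases and $\lambda(f)>1$ in the fourth.

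The second, harder step is to produce the geometric model in each case. In the \emph{elliptic} case one uses a regularisation argument (due to Manin in a simple form, refined by Diller--Favre): because the orbit $\{(f^n)^*e_0\}$ lies in a bounded region, only finitely many classes of exceptional curves can be contracted by iterates of $f$, so one can blow up a finite $f$-invariant configuration of points to obtain a smooth projective surface~$X$ on which $f$ is an automorphism; since $f^*$ has finite order on $\mathrm{NS}(X)$ (a negative-definite lattice up to the hyperbolic direction of an invariant class), some iterate $f^N$ lies in $\mathrm{Aut}(X)^0$, i.e. is isotopic to the identity. In the \emph{parabolic} case, the invariant isotropic ray is realised, after passing to a suitable blow-up $X\to \p^2$, by the class of a genus $0$ or $1$ pencil; Riemann--Roch, combined with the equality $C^2=0$ and $K_X\cdot C\in\{-2,0\}$ for the underlying curve $C$, shows that this pencil is a rational fibration (linear growth, Jordan block of size $1$ in a fake sense but orbit growing linearly via the non-diagonalisable $2\times 2$ block on the affine part) or an elliptic fibration (quadratic growth, full $3\times 3$ Jordan block), and $f$ preserves it. The obstruction to regularisation in the linear case is classical: an automorphism of a rational surface preserving a rational fibration acts on the base through a finite group, incompatible with linear degree growth; hence $f$ cannot be conjugate to a surface automorphism. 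In the \emph{loxodromic} case nothing further need be produced, the exponential growth being the content.

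For the final uniqueness assertion in cases two and three, note that two distinct $f$-invariant fibrations would give two invariant isotropic classes with nonzero intersection, contradicting the fact that a parabolic isometry of the hyperbolic space has a unique fixed isotropic ray. The main obstacle in executing the plan rigorously is the Diller--Favre regularisation of the parabolic case: one must control which exceptional classes are sent to effective classes by infinitely many iterates $(f^n)^*$, and show that the resulting base points of $f^{\pm n}$ form a finite set after blowing up an $f$-invariant bubble cluster. Once this regularisation is available, the distinction between linear and quadratic growth reduces to the elementary computation of $\dim\ker(f^*-\mathrm{id})^k$ on $\mathrm{NS}(X)_\R$.
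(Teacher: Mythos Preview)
The paper does not give its own proof of this theorem: it is stated as background and attributed to \cite{Gi,DiFa}, with no argument supplied. So there is no ``paper's proof'' to compare against; your sketch is effectively a summary of the Gizatullin/Diller--Favre arguments as filtered through the Picard--Manin/hyperbolic-space viewpoint of \cite{Ma,Ca}. As such the overall architecture is correct and standard.

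That said, one step in your sketch is wrong as written. In the linear-growth case you claim that ``an automorphism of a rational surface preserving a rational fibration acts on the base through a finite group, incompatible with linear degree growth.'' This is false: already on a Hirzebruch surface $\mathbb{F}_n$ the automorphism group acts on the base $\p^1$ through all of $\PGL(2,\C)$, so the base action need not be finite, and in any case a finite base action does not by itself rule out linear growth. The genuine obstruction is Gizatullin's theorem on parabolic automorphisms: if $g\in\Aut(X)$ acts parabolically on $\mathrm{NS}(X)_{\mathbb{R}}$, the unique $g^*$-invariant isotropic ray is spanned by a nef class $C$ with $K_X\cdot C=0$ (because $K_X$ is $g^*$-fixed and lies in the orthogonal of the Jordan block), hence $C$ is the class of a genus-one fibration, and the $3\times 3$ Jordan block forces \emph{quadratic} growth. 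Thus no automorphism of a smooth projective surface can have linear degree growth; this is what excludes regularisation in the second case, not any finiteness of the base action.

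Your description of the Jordan-block dichotomy in the parabolic case is also somewhat garbled (``Jordan block of size $1$ in a fake sense but orbit growing linearly via the non-diagonalisable $2\times 2$ block on the affine part''). The clean statement is: for a regularisable parabolic map one works on a finite-dimensional $\mathrm{NS}(X)$ and finds a $3\times 3$ nilpotent block, giving quadratic growth and an elliptic fibration; for a non-regularisable parabolic map one stays on the infinite-dimensional Picard--Manin space, where the invariant isotropic class satisfies $K\cdot C=-2$, the fibration is rational, and the growth is linear because base-points escape along the fibres rather than because of a finite-dimensional Jordan block. Separating these two mechanisms would make your outline accurate.
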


\begin{defis}
Let $f$ be a birational map of $\mathbb{P}^2$.

If the sequence $(\deg f^n)_{n\in \mathbb{N}}$ is bounded, $f$ is said to be \emph{ elliptic}.

When $(\deg f^n)_{n\in \mathbb{N}}$ grows linearly or quadratically, we say that $f$ is \emph{ parabolic}.

If $\lambda(f)>1$, then $f$ is an \emph{ hyperbolic map}.
\end{defis}

As we said the Cremona group acts naturally on a hyperbolic space of infinite dimension (\cite{Ma,Ca}); we can say that a birational map is elliptic, resp. parabolic, resp. hyperbolic, if the corresponding isometry is elliptic, resp. parabolic, resp. hyperbolic (\cite[Chapter 8, \S2]{DG}). This definition coincides with the previous one (\cite{Ca}).

\begin{egs}
Any automorphism of $\mathbb{P}^2$ or of an Hirzebruch surface $\mathbb{F}_n$ and any birational map of finite order is elliptic.

The map $(x:y:z)\dasharrow(xy:yz:z^2)$ is parabolic.

A H\'enon map (automorphism of $\C^2$)
\begin{align*}
& (x,y)\mapsto (y,P(y)-\delta x), && \delta\in\mathbb{C}^*,\, P\in\mathbb{C}[y],\,\deg P\geq 2
\end{align*}
extends to a hyperbolic birational map of $\mathbb{P}^2$, of dynamical degree~$\deg P$.
\end{egs}

\begin{defis}\label{Def:TypePlong}
Let $\theta\colon\mathrm{SL}(2,\mathbb{Z})\to\mathrm{Bir}(\mathbb{P}^2)$ be an embedding of $\mathrm{SL}(2,\mathbb{Z})$ into the Cremona group.

We say that $\theta$ \emph{ preserves the type} if $\theta$ sends elliptic (respectively parabolic, respectively hyperbolic) element onto elliptic (respectively parabolic, respectively hyperbolic) map.

We say that $\theta$ is \emph{ elliptic} if each element of $\mathrm{im}\,\theta$ is elliptic.

The morphism $\theta$ is \emph{ parabolic} (respectively \emph{ hyperbolic}) if each element of infinite order of $\mathrm{im}\,\theta$ is parabolic (respectively hyperbolic). 
\end{defis}

\bigskip

\subsection{The central involution  of $\mathrm{SL}(2,\mathbb{Z})$ and its image into $\mathrm{Bir}(\mathbb{P}^2)$}\label{involutioncentrale}

The element $S^2\in \mathrm{SL}(2,
\mathbb{Z})$ is an involution; therefore its image by any embedding $\theta\colon \mathrm{SL}(2,\mathbb{Z})\to \Bir(\p^2)$ is a birational involution. As it was proved by Bertini, we have the following classification:

\begin{theo}[\cite{Ber}]
An element of order $2$ of the Cremona group  is up to conjugacy one of the following 
\smallskip
\begin{itemize}
\item an automorphism of $\mathbb{P}^2$;

\item a de Jonqui\`eres involution $\iota_{\mathrm{dJ}}$ of degree~$\nu\geq 2$;

\item a Bertini involution $\iota_B$;

\item a Geiser involution $\iota_G$.
\end{itemize}
\end{theo}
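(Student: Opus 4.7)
The plan is to reduce the classification to an equivariant minimal model analysis of rational surfaces carrying a $\Z{2}$-action. Given $f\in \Bir(\p^2)$ with $f^2=\id$, the first step is to regularise $f$: the classical resolution of indeterminacies, applied $\langle f\rangle$-equivariantly, produces a smooth projective rational surface $S$, a birational morphism $S\to \p^2$, and an automorphism $\tilde f\in \Aut(S)$ of order $2$ conjugate to $f$. The problem thus becomes the classification of pairs $(S,\tilde f)$ up to $\langle \tilde f\rangle$-equivariant birational equivalence.

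The second step is to run the $\langle \tilde f\rangle$-equivariant minimal model program on $S$: contract $\tilde f$-orbits of disjoint $(-1)$-curves as long as possible, to arrive at a $\langle \tilde f\rangle$-minimal rational surface $S'$. By the equivariant Manin--Iskovskikh theorem, $S'$ is either (a) a $\langle \tilde f\rangle$-Mori conic bundle over $\p^1$ (the $\langle \tilde f\rangle$-invariant part of $\mathrm{Pic}(S')$ having rank $2$), or (b) a del Pezzo surface whose $\langle \tilde f\rangle$-invariant Picard group has rank $1$. The surface $S'$ is thus severely restricted.

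The third step is a case-by-case analysis of the resulting $\langle \tilde f\rangle$-minimal pairs. In case (a), if $\tilde f$ preserves each fibre of the conic bundle, then restricting to a general fibre $\simeq \p^1$ gives an involution and $\tilde f$ corresponds to a de Jonqui\`eres involution $\iota_{\mathrm{dJ}}$ of some degree $\nu\geq 2$; if $\tilde f$ permutes two rulings (on a Hirzebruch surface), one descends equivariantly to $\p^2$. In case (b), when $S'\simeq \p^2$ we obtain an element of $\Aut(\p^2)=\PGL(3,\C)$; when $S'$ is a del Pezzo surface of degree $2$ and $\tilde f$ is the deck involution of the anti-canonical double cover $S'\to \p^2$ ramified along a smooth plane quartic, we recover the Geiser involution $\iota_G$; when $S'$ is a del Pezzo surface of degree $1$ and $\tilde f$ is the deck involution of the associated double cover of a quadric cone, we recover the Bertini involution $\iota_B$.

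The main obstacle is to show that the remaining del Pezzo cases (degrees $3$ through $6$, together with $\p^1\times\p^1$) do not yield new types of involutions beyond the four listed above. This requires analysing the fixed locus of $\tilde f$ on each such $S'$ (which, by the holomorphic Lefschetz fixed point formula, consists of a smooth curve possibly together with isolated points) and controlling the $\tilde f$-action on $\mathrm{Pic}(S')$, in order to exhibit either a $\tilde f$-invariant conic bundle structure (reducing to $\iota_{\mathrm{dJ}}$) or a $\tilde f$-equivariant birational morphism to $\p^2$. The non-triviality of this step explains why the statement, although classical, is due to Bertini and had to be completed by subsequent authors (Bayle--Beauville, Blanc, de Fernex) in order to reach the sharp classification stated above.
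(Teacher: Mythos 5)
The paper does not prove this statement: it is quoted as a classical result with a citation to Bertini, and the sharp form used here is taken from Bayle--Beauville (the paper later invokes their Theorem~1.4 in the proof of Lemma~2.10). Your outline is, in substance, exactly the proof from that reference: regularise the involution, run the $\mathbb{Z}/2\mathbb{Z}$-equivariant minimal model program to land on either an equivariant conic bundle or a del Pezzo surface with invariant Picard rank one, and then identify the minimal pairs with $\iota_{\mathrm{dJ}}$, $\iota_G$, $\iota_B$ or a linearisable involution. Two small imprecisions: in the conic bundle case the relevant dichotomy is whether the induced action on the base $\mathbb{P}^1$ is trivial (de Jonqui\`eres) or not (linearisable), rather than ``permuting two rulings'', which only makes sense on $\mathbb{P}^1\times\mathbb{P}^1$; and the elimination of the intermediate del Pezzo degrees, which you rightly flag as the main obstacle, is usually done by the trace formula for the action on $\mathrm{Pic}(S')$ together with the classification of conjugacy classes of involutions in the relevant Weyl groups, not only by inspecting fixed loci. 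With those points filled in, the argument is the standard and correct one.
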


\noindent Bayle and Beauville showed that the conjugacy classes of involutions in $\mathrm{Bir}(\mathbb{P}^2)$ are determined by the birational type of the curves of fixed points of positive genus (\cite{BaBe}). More precisely the set of conjugacy classes is parametrised by a disconnected algebraic variety whose connected components are respectively 
\begin{itemize}
\item the moduli spaces of hyperelliptic curves of genus $g$ (de Jonqui\`eres involutions);

\item the moduli space of canonical curves of genus $3$ (Geiser involutions);

\item the moduli space of canonical curves of genus $4$ with vanishing theta characteristic, isomorphic to a non singular intersection of a cubic surface and a quadratic cone in $\mathbb{P}^3(\mathbb{C})$ (Bertini involutions).
\end{itemize}

The image of $S^2$ can neither be a Geiser involution, nor a Bertini involution; more precisely, we have the following:

\begin{lemm}
Let $\theta$ be an embedding of $\mathrm{SL}(2,\mathbb{Z})$ into the Cremona group. Up to birational conjugation, one of the following holds.
\begin{itemize}
\item The involution $\theta(S^2)$ is an automorphism of $\mathbb{P}^2$;

\item the map $\theta(S^2)$ is a de Jonqui\`eres involution of degree~$3$ fixing $($pointwise$)$ an elliptic curve.
\end{itemize}
\end{lemm}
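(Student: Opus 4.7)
The plan is to apply the classification of involutions of $\Bir(\p^2)$ recalled just above (Bertini's theorem, and the Bayle--Beauville parametrisation of conjugacy classes by the birational type of the fixed curve) to the involution $\iota:=\theta(S^2)$, and to rule out the Geiser, Bertini and higher-degree de Jonqui\`eres possibilities by exploiting the centrality of $S^2$ in $\mathrm{SL}(2,\mathbb{Z})$.

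Since $S^2$ generates the center of $\mathrm{SL}(2,\mathbb{Z})$, the element $\iota$ commutes with $\theta(M)$ for every $M\in\mathrm{SL}(2,\mathbb{Z})$, so the centraliser $Z:=Z_{\Bir(\p^2)}(\iota)$ contains $\theta(\mathrm{SL}(2,\mathbb{Z}))$. In particular $Z$ is infinite and contains infinite-order elements such as $\theta(R)$. Assume, towards a contradiction, that up to birational conjugation $\iota$ is neither an automorphism of $\p^2$ nor a de Jonqui\`eres involution of degree~$3$. Then by Bertini's theorem $\iota$ is a Geiser involution, a Bertini involution, or a de Jonqui\`eres involution of degree $\nu\geq 4$, and in each of these cases the Bayle--Beauville description shows that $\iota$ fixes pointwise a smooth curve $C$ of geometric genus $g\geq 2$ (respectively $3$, $4$, or $\nu-2$). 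Any $\alpha\in Z$ preserves the fixed locus of $\iota$, hence preserves $C$ setwise, yielding a restriction homomorphism
\[
\rho \colon Z \longrightarrow \Aut(C).
\]
Since $g\geq 2$, the group $\Aut(C)$ is finite by Hurwitz's bound, so $\rho\circ\theta$ has finite image and therefore infinite kernel in $\mathrm{SL}(2,\mathbb{Z})$.

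It then suffices to prove that an element of $\Bir(\p^2)$ which commutes with $\iota$ and restricts to the identity on $C$ must belong to $\{\id,\iota\}$: combined with the previous paragraph this would give $\theta$ an infinite kernel, contradicting injectivity. I would carry out this last step by passing to a smooth projective surface $X$ with a birational morphism $X\to\p^2$ on which $\iota$ becomes biregular (the relevant Del Pezzo of degree $2$ or $1$ in the Geiser and Bertini cases, a suitable conic bundle in the de Jonqui\`eres case), so that the strict transform $\widetilde C$ of $C$ is smooth and pointwise fixed by $\iota$. If $\alpha\in Z$ restricts to the identity on $C$, then its lift to $X$ also fixes $\widetilde C$ pointwise and commutes with $\iota$; comparing the actions of $\alpha$ and $\iota$ on the normal bundle $N_{\widetilde C/X}$ (a line bundle on a curve of genus $\geq 2$, on which $\iota$ acts as $-1$) forces $\alpha$ to coincide with $\id$ or with $\iota$ in a neighbourhood of $\widetilde C$, and hence on all of $X$ by rigidity.

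This last point is the step I expect to be the main technical obstacle: the general principle that an involution fixing a curve of general type pointwise is rigid is intuitively clear, but making it into a clean argument uniformly across the three remaining cases requires the right choice of model $X$ and a careful local study near $\widetilde C$. Once it is granted, only the two cases stated in the lemma survive, which is what we wanted.
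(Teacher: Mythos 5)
Your reduction of the problem to the finiteness of the centraliser of $\iota=\theta(S^2)$ contains a genuine gap: the rigidity claim you defer to the end --- that an element of $\Bir(\p^2)$ commuting with $\iota$ and fixing the curve $C$ pointwise must be $\id$ or $\iota$ --- is \emph{false} when $\iota$ is a de Jonqui\`eres involution of degree $\nu\ge 4$. Indeed, write $\iota$ in de Jonqui\`eres form as $(x,y)\dasharrow(x,a(x)/y)$, fixing pointwise the hyperelliptic curve $y^2=a(x)$ of genus $\nu-2\ge 2$; then for all $p,q\in\C(x)$ not both zero the map $(x,y)\dasharrow\left(x,\frac{p(x)y-q(x)a(x)}{-q(x)y+p(x)}\right)$ is birational (its determinant $p^2-q^2a$ is nonzero since $a$ is not a square in $\C(x)$), commutes with $\iota$ and fixes the curve pointwise, and these maps form an \emph{infinite abelian} group. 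So the centraliser of such an involution is infinite and your contradiction never materialises in that case. Moreover, the normal-bundle argument could not establish the rigidity even where it does hold: a birational map fixing a curve pointwise and acting trivially on its normal bundle need not be the identity near the curve (compare $(x,y)\mapsto(x+y^2,y)$ along $y=0$), since a map is not determined by its $1$-jet along a divisor.

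The paper's proof is organised differently precisely to get around this. For the Geiser and Bertini cases it quotes the finiteness of the centraliser directly (\cite{BPV2}, Corollary $2.3.6$) --- this is the nontrivial input your sketch was attempting to reprove. For the remaining non-linearisable de Jonqui\`eres case it exploits the group structure of $\mathrm{SL}(2,\z)$: since $\Gamma$ has positive genus, the image of $\mathrm{G}=\theta(\mathrm{SL}(2,\z))$ in $\Aut(\Gamma)$ cannot be the whole of $\mathrm{G}/\langle\theta(S^2)\rangle\simeq\Z{2}*\Z{3}$, so the subgroup $\mathrm{G}'$ fixing $\Gamma$ pointwise is a normal subgroup of $\mathrm{SL}(2,\z)$ strictly containing the centre, hence infinite \emph{and non-abelian}; by \cite{BPV}, Theorem $1.5$, the inertia group of a curve of genus $\ge 2$ cannot be infinite and non-abelian, so $\Gamma$ has genus $1$ and $\nu=3$. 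The non-abelianness is essential: it is exactly what excludes the abelian torus exhibited above, which finiteness of $\Aut(C)$ alone cannot do. To repair your argument you would need to replace your rigidity claim by this weaker (and true) Castelnuovo-type statement and then use that every normal subgroup of $\mathrm{SL}(2,\z)$ strictly containing $\{\pm I\}$ is non-abelian.
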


\begin{rema}
The first case is satisfied by the examples of \S\ref{Sec:Embeddtype}, \S\ref{Sec:ReprTypEllipt},  and \S\ref{Sec:ReprTypParab}. The second case is also possible, for any elliptic curve (see \S\ref{Sec:invfixbourbell}).
\end{rema}

\begin{proof}
Since $S^2$ commutes with $\mathrm{SL}(2,\mathbb{Z})$ the group $\mathrm{G}=\theta(\mathrm{SL}(2,\mathbb{Z}))$ is contained in the centraliser of the involution~$S^2$. If $\theta(S^2)$ is a Bertini or Geiser involution, the centraliser of~$\theta(S^2)$ is finite (\cite{BPV2}, Corollary $2.3.6$); as a consequence~$\theta(S^2)$ is a de Jonqui\`eres involution. 

Assume that $\theta(S^2)$ is not linearisable; then $\theta(S^2)$ fixes (pointwise) a unique irreducible curve $\Gamma$ of genus $\ge 1$. The group $\mathrm{G}$ preserves $\Gamma$ and the action of $\mathrm{G}$ on $\Gamma$ gives the exact sequence $$1 \to \mathrm{G}'\to\mathrm{G}\to \mathrm{H} \to 1$$ where $\mathrm{H}$ is a subgroup of $\mathrm{Aut}(\Gamma)$, $\mathrm{G}'$ contains $\theta(S^2)$ and fixes $\Gamma$. Since the genus of $\Gamma$ is positive $\mathrm{H}$ cannot be equal to~$\mathrm{G}/\langle \theta(S^2)\rangle$, free product of $\mathbb{Z}/2\mathbb{Z}$ and $\mathbb{Z}/3\mathbb{Z}$. This implies that the normal subgroup~$\mathrm{G}'$ of
$\mathrm{G}$ strictly contains $\langle \theta(S^2)\rangle$ and thus that it is infinite and not abelian. In particular the group of birational maps fixing (pointwise) $\Gamma$ is infinite, and not abelian, thus $\Gamma$ is of genus $1$ (\emph{see} \cite{BPV}, Theorem $1.5$).
\qed\end{proof}

\section{Embeddings preserving the type and elliptic, parabolic embeddings}

\subsection{Embeddings preserving the type}\label{Sec:Embeddtype} 
Henceforth we will often denote by $(f_1(x,y,z):f_2(x,y,z):f_3(x,y,z))$ the map $$(x:y:z)\dasharrow (f_1(x,y,z):f_2(x,y,z):f_3(x,y,z))$$
and by $(p(x,y),q(x,y))$ the birational map $$(x,y)\dasharrow (p(x,y),q(x,y))$$ of $\mathbb{C}^2$.

\medskip

Let us begin this section by a property satisfied by all embeddings of $\SL(2,\z)\to~\Bir(\p^2)$ that preserve the type.

\begin{lemm}
Let $\theta\colon\mathrm{SL}(2,\mathbb{Z})\to\mathrm{Bir}(\mathbb{P}^2)$ be an embedding that preserves the type. Either for all parabolic matrices~$M$, $\theta(M)$ preserves a unique rational fibration, or for all parabolic matrices $M$, $\theta(M)$ preserves a unique elliptic fibration.
\end{lemm}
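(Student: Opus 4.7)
\emph{Plan of proof.} By the Gizatullin--Diller--Favre classification recalled above, every parabolic element of $\Bir(\p^2)$ preserves a \emph{unique} fibration, which is rational when the degree sequence grows linearly and elliptic when it grows quadratically. The lemma therefore reduces to the assertion that the type of this unique fibration attached to $\theta(M)$ is independent of the parabolic matrix $M$. The key observation that drives this is: if $f,g\in\Bir(\p^2)$ are two \emph{commuting} parabolic maps, they preserve the same fibration. Indeed, if $\mathcal{F}$ is the unique $f$-invariant fibration, then $g_*\mathcal{F}$ is again $f$-invariant since $fg=gf$, hence $g_*\mathcal{F}=\mathcal{F}$ by uniqueness; applying uniqueness to $g$ then shows that $\mathcal{F}$ is also the unique $g$-invariant fibration, and in particular has the same type.

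I would then use the classical fact that every parabolic element of $\SL(2,\z)$ is $\SL(2,\z)$-conjugate to $\varepsilon R^k$ for some $\varepsilon\in\{\pm 1\}$ and some non-zero integer $k$. This follows either from the transitive action of $\SL(2,\z)$ on the cusps $\mathbb{Q}\cup\{\infty\}$ of the hyperbolic upper half-plane (each parabolic matrix fixes exactly one cusp, so can be conjugated to fix $\infty$), or from a direct integral normalisation of trace-$(\pm 2)$ matrices.

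Finally, let $\mathcal{F}_R$ denote the fibration preserved by $\theta(R)$. For every $k\not=0$ the map $\theta(R^k)=\theta(R)^k$ trivially preserves $\mathcal{F}_R$. For $-R^k=S^2R^k$, the centrality of $S^2$ in $\SL(2,\z)$ implies that $\theta(-R^k)$ commutes with $\theta(R)$; being parabolic by type preservation, it preserves $\mathcal{F}_R$ by the commuting-parabolic observation. For an arbitrary parabolic $M=A(\varepsilon R^k)A^{-1}$ with $A\in\SL(2,\z)$, the image $\theta(M)=\theta(A)\theta(\varepsilon R^k)\theta(A)^{-1}$ preserves the fibration $\theta(A)_*\mathcal{F}_R$, which is rational (respectively elliptic) precisely when $\mathcal{F}_R$ is, since birational conjugation does not alter the birational type of the base of a fibration. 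This yields the claimed dichotomy. The main subtlety is not a technical obstacle but rather the conceptual combination of the uniqueness of the invariant fibration with the transitivity of parabolic conjugacy in $\SL(2,\z)$ modulo the centre, which together force a single type across all parabolic matrices.
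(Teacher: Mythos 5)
Your proof is correct and follows essentially the same route as the paper's: both reduce to the normal forms $\pm R^a$ via $\SL(2,\z)$-conjugacy, use commutation with $\theta(R)$ together with the uniqueness of the invariant fibration of a parabolic map to show these all preserve the same fibration $\mathcal{F}$, and then transport $\mathcal{F}$ by $\theta(N_M)$ for a general parabolic $M$, noting that birational conjugation does not change whether the fibration is rational or elliptic. The only difference is presentational: you spell out the commuting-parabolics argument that the paper leaves implicit.
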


\begin{proof}
Let us recall that a parabolic element of $\mathrm{SL}(2,\mathbb{Z})$ is up to conjugacy 
one of the following
\begin{align*}
&T_a^{+}=\left[\begin{array}{cc}
1 & a\\
0 & 1
\end{array}\right],&&
T_a^{-}=\left[\begin{array}{cc}
-1 & a\\
0 & -1
\end{array}\right], &&a\in\mathbb{Z}.
\end{align*}

\noindent For any $a\not=0$, the image $\theta(T_a^{+})$ of $T_a^{+}$ preserves a unique fibration on $\p^2$. Denote by~$\mathcal{F}$ the fibration preserved by $T_1^{+}$, given by $F\colon\mathbb{P}^2\dasharrow\mathbb{P}^1$. For any $a\not=0$, $T_a^{+}$ and $T_a^{-}$ commute with $T_{1}^{+}$ so the $\theta(T_a^{+})$'s and the $\theta(T_a^{-})$'s preserve the fibration $\mathcal{F}$ and $\mathcal{F}$ is the only fibration invariant by these elements. 

Let $M$ be a parabolic matrix. On the one hand $M$ is conjugate to $T_a^{+}$ or $T_a^{-}$ for some~$a$ via a matrix $N_M$ and on the other hand parabolic maps preserve a unique fibration; thus~$\theta(M)$ preserves the fibration given by $F\theta(N_M)^{-1}$. In particular if $F$ defines a rational (respectively elliptic) fibration, then $F\theta(N_M)^{-1}$ defines a rational (respectively elliptic) one.
\qed\end{proof}

\subsubsection{The standard embedding $\theta_s$}\label{Subsec:Standard}

The classical embedding
\begin{align*}
&\theta_s\colon\mathrm{SL}(2,\mathbb{Z})\to\mathrm{Bir}(\mathbb{P}^2),
&& \left[
\begin{array}{cc}
a & b\\
c & d
\end{array}
\right]\mapsto(x^ay^b,x^cy^d)
\end{align*}
preserves the type (\emph{see} for example \cite[Theorem 7.1]{JanLiLin}).

For any $M\in \mathrm{SL}(2,\mathbb{Z})$, if $M$ is elliptic, $\theta_s(M)$ is, up to conjugacy, one of the following birational maps of finite order 
\begin{align*}
&\left(\frac{1}{x},\frac{1}{y}\right), &&\left(y,\frac{1}{xy}\right),&&\left(y,
\frac{1}{x}\right), && \left(\frac{1}{y},
x\right), &&\left(\frac{1}{x},xy\right).
\end{align*}

\noindent If $M$ is parabolic, $\theta_s(M^n)$ is, up to conjugacy, $(xy^{na},y)$,  or  $(y^{na}/x,1/y)$ with~$a$ in~$\mathbb{Z}$ so $\theta_s(M)$ is parabolic. If $M$ is hyperbolic,~$M$ has two  real eigenvalues $\mu$ and $\mu^{-1}$ such that $\vert\mu\vert^{-1}<1<\vert\mu\vert$ and~$\lambda(\theta_s(M))=\vert\mu\vert>1$ and $\theta_s(M)$ is hyperbolic.

\bigskip

In \cite[page~9]{Fa}, a construction of a morphism $\mathrm{SL}(2,\mathbb{Z})\to\mathrm{Bir}(\mathbb{P}^2)$ preserving the type was given, inspired from \cite{CaLo} and \cite{Go}: the quotient of $\p^1\times \p^1$ by the involution $(x,y)\mapsto \left(\frac{1}{x},\frac{1}{y}\right)$ is a rational (singular) cubic surface $C\subset\p^3$, called \emph{Cayley cubic surface}. Explicitly, we can assume (by a good choice of coordinates) that $$C=\{(W:X:Y:Z)\in \p^3\ |\ XYZ+WYZ+WXZ+WXY=0\}$$ and that the quotient is given by
\begin{align*}
&\p^1\times\p^1\to  C, \\
&(x,y) \mapsto \left((x-1)(x-y)(1+y): (y-1)(y-x)(1+x):\vphantom{\Big(} (xy+1)(x+1)(y+1): (x-1)(y-1)(xy+1)\right).
\end{align*}

The involution $(x,y)\mapsto \left(\frac{1}{x},\frac{1}{y}\right)$ being the center of $\theta_s(\SL(2,\z))$, the quotient provides a morphism $\theta_s'\colon \SL(2,\z)\to \Bir(C)\simeq \Bir(\p^2)$ whose kernel is generated by $S^2$. The morphism preserves the type, but is not an embedding. It is also possible to deform the construction in order to have similar actions on other cubic surfaces (\emph{see} \cite{CaLo}).

\subsubsection{One first twisting of $\theta_s$}\label{ThetaMo}

We can "twist" the standard embedding $\theta_s$ in the following way.

Let $\theta_{-}(S)=\theta_s(S)=(y,\frac{1}{x})$ and $\theta_{-}(R)=(xy,-y)\not=\theta_s(R)=(xy,y)$. The map $\theta_{-}(RS)=\theta_{-}(R)\theta_{-}(S)=(\frac{y}{x},-\frac{1}{x})$ has order $3$. Since $\theta_{-}(R)$ commutes with $\theta_{-}(S^2)$, the relations of~$\mathrm{SL}(2,\z)$ are satisfied and $\theta_{-}$ is a morphism from $\mathrm{SL}(2,\z)$ into $\mathrm{Bir}(\p^2)$. 


 \begin{prop}\label{Prop:thetaMoins}
 The map $\theta_{-}\colon \mathrm{SL}(2,\z)\to\Bir(\p^2)$ is an embedding that preserves the type. 
 
The groups $\theta_s(\mathrm{SL}(2,\mathbb{Z}))$ and $\theta_{-}(\mathrm{SL}(2,\mathbb{Z}))$ are not conjugate in the Cremona group.
 \end{prop}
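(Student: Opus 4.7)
The plan has three main ingredients. First, the authors have already verified that $\theta_{-}(S)^4=\id$, $(\theta_{-}(R)\theta_{-}(S))^3=\id$, and $\theta_{-}(R)$ commutes with $\theta_{-}(S^2)$; together with the presentation of $\SL(2,\z)$ recalled in~$\S\ref{SubSec:SL2Z}$ this guarantees that $\theta_{-}$ is a group homomorphism. To show injectivity, the plan is to establish by induction on word length in $\{R^{\pm 1},S^{\pm 1}\}$ the key formula
$$\theta_{-}(M) \;=\; \bigl(\,\epsilon_1(M)\,x^a y^b,\ \epsilon_2(M)\,x^c y^d\,\bigr)\qquad\text{for }M=\bigl[\begin{smallmatrix}a & b\\ c & d\end{smallmatrix}\bigr]\in \SL(2,\z),$$
with $\epsilon_1(M),\epsilon_2(M)\in\{\pm 1\}$. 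The inductive step follows from the composition rule for signed monomial maps, in which the exponents multiply as matrices in $\SL(2,\z)$ while the signs stay in $\{\pm 1\}$; the base cases $R^{\pm 1}$, $S^{\pm 1}$ are immediate. The formula gives injectivity at once: $\theta_{-}(M)=\id$ forces $(a,b,c,d)=(1,0,0,1)$, hence $M=I$.

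Preservation of the type then splits into three routine cases. An elliptic $M$ has finite order, so $\theta_{-}(M)$ has finite (hence bounded) degree sequence: elliptic. A parabolic $M$ is conjugate in $\SL(2,\z)$ to $\pm R^n$ with $n\neq 0$, and the displayed formula gives $\theta_{-}(R^n)=(\pm xy^n,\pm y)$, a Jonqui\`eres map of linearly growing degree preserving the rational fibration $\{y=\mathrm{const}\}$: parabolic. For hyperbolic $M$ with spectral radius $\mu>1$, the signs do not affect the degree of a monomial map, so $\deg \theta_{-}(M^k)=\deg \theta_s(M^k)$, which grows like $\mu^k$ (compare $\S\ref{Subsec:Standard}$); hence $\lambda(\theta_{-}(M))=\mu>1$ and $\theta_{-}(M)$ is hyperbolic.

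For the non-conjugacy, suppose for a contradiction that some $\phi\in\Bir(\p^2)$ satisfies $\phi\,\theta_s(\SL(2,\z))\,\phi^{-1}=\theta_{-}(\SL(2,\z))$. Conjugation by $\phi$ defines an abstract group automorphism $\alpha$ of $\SL(2,\z)$ via $\phi\,\theta_s(M)\,\phi^{-1}=\theta_{-}(\alpha(M))$. The plan is to pin down $\alpha(R)$ by a centralizer argument: a direct matrix computation shows that the centralizer in $\SL(2,\z)$ of any non-zero power of $R$ equals $\langle R\rangle\cdot\{\pm I\}$, and $\alpha$ preserves centralizers while sending the parabolic $R$ to a parabolic element (since $\theta_{-}$ preserves the type). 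Because in $\PSL(2,\z)=\z/2\ast\z/3$ an infinite cyclic subgroup has a unique generator up to inversion, one deduces that $\alpha(R)=\pm gR^{\pm 1}g^{-1}$ for some $g\in\SL(2,\z)$. Consequently $\phi\,\theta_s(R)\,\phi^{-1}$ is $\Bir(\p^2)$-conjugate to one of the four maps $\theta_{-}(\pm R^{\pm 1})$; each is parabolic with unique invariant fibration $\{y=\mathrm{const}\}$ (the unique invariant fibration of its square $\theta_{-}(R^{\pm 2})$), and direct computation shows the induced action on the base $\p^1$ is either $y\mapsto -y$ or $y\mapsto -1/y$, in both cases a \emph{nontrivial} involution. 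On the other hand $\theta_s(R)=(xy,y)$ acts \emph{trivially} on the base of its unique invariant fibration $\{y=\mathrm{const}\}$; and the $\PGL(2,\C)$-conjugacy class of the action induced on the base of the unique invariant rational fibration of a Jonqui\`eres-parabolic map is a $\Bir(\p^2)$-conjugacy invariant, since birational conjugation carries the unique invariant fibration to the unique invariant fibration and intertwines the base actions. This is the desired contradiction.

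The step I expect to be the most delicate is the centralizer/automorphism argument that forces $\alpha(R)=\pm gR^{\pm 1}g^{-1}$: one must carefully verify that the iso of image subgroups induced by $\phi$-conjugation transports \emph{internal} centralizers, and that the descent to $\PSL(2,\z)$ rules out exotic primitive parabolics such as $-R^4$. Once this is in hand, all other steps reduce to explicit calculations with signed monomial maps.
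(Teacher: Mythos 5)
Your proof is correct and follows essentially the same route as the paper: the identity $\theta_{-}(M)=(\pm x,\pm y)\circ\theta_s(M)$ gives injectivity and type preservation at once, and non-conjugacy is obtained by reducing to the candidate parabolic images of $R$ and comparing the induced action (trivial versus of order two) on the base of the unique invariant fibration. The only variation is that you pin down those candidates via a centralizer computation in $\SL(2,\z)$ where the paper invokes the absence of roots of $R$ in the group; both serve the same purpose, and your version is if anything slightly more airtight.
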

 
 \begin{proof}
 For each $M\in \mathrm{SL}(2,\z)$, one has $\theta_{-}(M)=\alpha_M\circ \theta_s(M)$ where $\alpha_M=(\pm x,\pm y)$, and in particular $\theta_{-}(M)$ and~$\theta_s(M)$ have the same degree. This observation implies that $\theta_{-}$ is an embedding, and that it preserves the type, since~$\theta_s $ does. 
 
 We now prove the second assertion. Suppose, for contradiction, that $\theta_s(\mathrm{SL}(2,\z))$ is conjugate to $\theta_{-}(\mathrm{SL}(2,\z))$; then~$\theta_s(R)=(xy,y)$ is conjugate to some parabolic element of  $\theta_{-}(\mathrm{SL}(2,\mathbb{Z}))$, which has no root in the group. This implies that $\theta_s(R)=(xy,y)$ or its inverse is conjugate to $\theta_{-}(R)=(xy,-y)$ or $\theta_{-}(RS^2)=\left(\frac{1}{xy},-\frac{1}{y}\right)$ in $\Bir(\p^2)$.
 
All these elements are parabolic elements of the Cremona group, each of them preserves a unique rational fibration, which is $(x,y)\mapsto y$. Since $\theta_s(R)$ preserves any fibre and both $\theta_{-}(R)$, $\theta_{-}(RS^2)$ permute the fibres, neither $\theta_s(R)$ nor $\theta_s(R^{-1})$ is  conjugate to  $\theta_{-}(R)$ or $\theta_{-}(RS^2)$ in~$\Bir(\p^2)$.
\qed\end{proof}

The map $\theta_{-}$ yields a "new" embedding of $\mathrm{SL}(2,\z)$ preserving the type. However, this map is not very far from the first one, and remains in $(\C^{*},\C^{*})\rtimes \mathrm{SL}(2,\z)$. We construct now new ones, more interesting. Conjugating the elements $\theta_{s}(S)=(y,\frac{1}{x})$ and $\theta_{s}(R)=(xy,y)$ by the birational map  $\left(\frac{x-1}{x+1},\frac{y-1}{y+1}\right)$, we get respectively  $(y,-x)$ and $\left(\frac{x+y}{xy+1},y\right)$.

More generally, we choose any $\varepsilon\in \C^{*}$, and set
\begin{align*}
&\theta_{\varepsilon}(S)=\left(y,-x\right), && \theta_{\varepsilon}(R)=\left(\frac{x+\varepsilon y}{\varepsilon+xy}, \varepsilon y\right).
\end{align*}
  
The map $\theta_{\varepsilon}(R)$ commutes with $\theta_{\varepsilon}(S^2)=\left(-x,-y\right)$, and $$\theta_{\varepsilon}(RS)=\left(\frac{y-\varepsilon x}{\varepsilon-xy}, -\varepsilon x\right)$$ is of order $3$, so $\theta_{\varepsilon}$ gives an homomorphism from $\mathrm{SL}(2,\z)$ to $\Bir(\p^2)$. The map $\theta_1$ being conjugate to the standard embedding, we can view this family as a deformation of the standard embedding. We prove now some technical results to show that the family consists of embedding preserving the type when $\varepsilon$ is a positive real number.

\begin{lemm}\label{Lem:BasePtsThetaEpsilon}
We view these maps on $\p^1\times \p^1$, via the embedding $(x,y)\mapsto ((x:1),(y:1))$. 
  
$(i)$ Writing $R_1=\left[\begin{array}{cc}
1 & 1\\
0 & 1
\end{array}\right] $, $R_2=\left[\begin{array}{cc}
1 & 0\\
1 & 1
\end{array}\right] $, 
both maps 
\begin{align*}
&\theta_{\varepsilon}(R_1)=\left(\frac{x+\varepsilon y}{\varepsilon+xy}, \varepsilon y\right)&& \text{and} &&\theta_{\varepsilon}(R_2)=\left(\frac{x}{\varepsilon}, \frac{\varepsilon(x+\varepsilon y)}{\varepsilon+xy}\right)
\end{align*}
have exactly two base-points both belonging to $\p^1\times \p^1$ $($no infinitely near point$)$, and being $p_1=(\varepsilon,-1)$ and $p_2=(-\varepsilon,1)$ $($or $((\varepsilon:1),(-1:1))$ and $((-\varepsilon:1),(1:1)))$.

$(ii)$ Both maps 
\begin{align*}
&\theta_{\varepsilon}(R_1)^{-1}=\left(\frac{\varepsilon(\varepsilon x- y)}{\varepsilon-xy}, \frac{y}{\varepsilon}\right)&& \text{and} &&\theta_{\varepsilon}(R_2)^{-1}=\left(\varepsilon x, \frac{y-\varepsilon x}{\varepsilon-xy}\right)
\end{align*}
have exactly two base-points, being $q_1=(1,\varepsilon)$ and $q_2=(-1,-\varepsilon)$.

$(iii)$ 
If $\varepsilon$ is a positive real number and  $M=R_{i_k}\dots R_{i_1}$, for $i_1,\dots,i_k\in \{1,2\}$, the follo\-wing hold:

\begin{itemize}
\item the points $q_1$ and $q_2$ are not base-points of $\theta_\varepsilon(M)$, and $\theta_\varepsilon(M)(\{q_1,q_2\})\cap \{p_1,p_2\}=~\emptyset$.

\item  the points $p_1$ and $p_2$ are not base-points of $\theta_\varepsilon(M^{-1})$, and $\theta_\varepsilon(M^{-1})(\{p_1,p_2\})\cap \{q_1,q_2\}=\emptyset$.
\end{itemize}
\end{lemm}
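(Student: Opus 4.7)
The plan is to handle parts (i) and (ii) by direct base-point computations on $\p^1\times\p^1$, and part (iii) by a sign-invariance argument exploiting the hypothesis $\varepsilon>0$.

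For (i), I lift $\theta_\varepsilon(R_1)$ to bihomogeneous coordinates as
\[((x_0:x_1),(y_0:y_1))\mapsto \bigl((x_0y_1+\varepsilon x_1y_0:\varepsilon x_1y_1+x_0y_0),\,(\varepsilon y_0:y_1)\bigr).\]
The second factor is a morphism since its two coordinates never vanish simultaneously (as $\varepsilon\neq 0$), so every base point comes from the first factor. Solving $x_0y_1+\varepsilon x_1y_0=0=\varepsilon x_1y_1+x_0y_0$, I first rule out the points with $x_1=0$ or $y_1=0$, then in affine coordinates obtain $y^2=1$ together with $x=-\varepsilon y$, giving exactly the two points $p_1=(\varepsilon,-1)$ and $p_2=(-\varepsilon,1)$. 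To exclude infinitely near base points, I blow up $p_1$ (and analogously $p_2$) in local coordinates $x=\varepsilon+u$, $y=-1+v$: in the chart $v=tu$, the first factor reduces on the exceptional divisor to $(1+\varepsilon t:\varepsilon t-1)$, whose two components cannot vanish simultaneously, so the lift extends to a morphism near the exceptional curve. The same computation, exchanging the roles of numerators and denominators, handles $\theta_\varepsilon(R_2)$, and (ii) is identical starting from the explicit formulas for the inverses.

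For (iii), the key observation is that $\mathbb{R}^*\times\mathbb{R}^*\subset\p^1\times\p^1$ splits into four open \emph{sign quadrants}, which separate the relevant points. Using $\varepsilon>0$, a short sign check shows that both $\theta_\varepsilon(R_1)$ and $\theta_\varepsilon(R_2)$ preserve each of the two same-sign quadrants $(0,\infty)^2$ and $(-\infty,0)^2$: in both, $xy>0$ keeps the denominators $\varepsilon+xy$ positive, and the numerators inherit the common sign of $x$ and $y$. Symmetrically, both $\theta_\varepsilon(R_1)^{-1}$ and $\theta_\varepsilon(R_2)^{-1}$ preserve each of the two mixed-sign quadrants $(0,\infty)\times(-\infty,0)$ and $(-\infty,0)\times(0,\infty)$, where now $xy<0$ keeps the denominators $\varepsilon-xy$ positive. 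Crucially, $q_1=(1,\varepsilon)$ and $q_2=(-1,-\varepsilon)$ lie in the two same-sign quadrants, while $p_1=(\varepsilon,-1)$ and $p_2=(-\varepsilon,1)$ lie in the two mixed-sign ones, so the four quadrants separate the $q_j$'s from the $p_l$'s.

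Part (iii) then follows by induction on the word length $k$. The case $k=0$ is immediate since $\{q_1,q_2\}\cap\{p_1,p_2\}=\emptyset$. For the inductive step, write $M=R_{i_k}M'$ with $M'=R_{i_{k-1}}\cdots R_{i_1}$: the induction hypothesis gives that $\theta_\varepsilon(M')$ is defined at $q_j$ and that $\theta_\varepsilon(M')(q_j)$ lies in the same-sign quadrant containing $q_j$; since by (i) the base points of $\theta_\varepsilon(R_{i_k})$ are $p_1,p_2$, both lying in mixed-sign quadrants, $\theta_\varepsilon(R_{i_k})$ is defined at $\theta_\varepsilon(M')(q_j)$; by the sign-invariance of $\theta_\varepsilon(R_{i_k})$, the image $\theta_\varepsilon(M)(q_j)$ stays in the same same-sign quadrant, hence is disjoint from $\{p_1,p_2\}$. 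The statement for $M^{-1}=R_{i_1}^{-1}\cdots R_{i_k}^{-1}$ is completely symmetric, using (ii) and the invariance of the mixed-sign quadrants under the inverses. The main obstacle is isolating the right invariant region: without it, tracking an arbitrary composition of the $\theta_\varepsilon(R_i)$'s at a single point is hopeless, and the positivity of $\varepsilon$ is exactly what makes the four real sign quadrants invariant under all the relevant maps.
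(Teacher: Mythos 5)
Your proposal is correct and follows essentially the same route as the paper: parts (i) and (ii) by direct computation (which the paper dismisses as "an easy calculation", so you are in fact more explicit about ruling out infinitely near points), and part (iii) by observing that for $\varepsilon>0$ the maps $\theta_\varepsilon(R_i)$ preserve the region $xy>0$ containing $q_1,q_2$ while their base-points lie in $xy<0$, and symmetrically for the inverses. Your refinement to the four individual sign quadrants, rather than just the two regions $U_\pm=\{xy\gtrless 0\}$ used in the paper, changes nothing essential.
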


\begin{proof}
Parts $(i)$ and $(ii)$ follow from an easy calculation, it remains to prove $(iii)$.

Let $U_{+}\subset \mathbb{R}^2\subset\p^1\times \p^1$  (resp. $U_{-}\subset \mathbb{R}^2\subset \p^1\times\p^1$) be the subset of points  $(x,y)$ with~$x$, $y\in \mathbb{R}$,  $xy>0$ (resp. $xy<0$). When $\varepsilon$ is a positive real number,  $\{p_1,p_2\}\subset U_-$ and~$\{q_1,q_2\}\subset U_+$, which implies that $\theta_\varepsilon(R_i)$ (resp. $\theta_\varepsilon(R_i^{-1})$)  is defined at any point of~$U_{+}$ (resp. of $U_-$), since $U_+\cap U_-=\emptyset$.

Moreover, the explicit form of the four maps given in $(i)$, $(ii)$ shows that $\theta_\varepsilon(R_i)(U_+)\subset~U_+$ and $\theta_\varepsilon(R_i^{-1})(U_-)\subset~U_-$  for $i=1,2$. This yields the result.
\qed\end{proof}

Recall that $\Pic{\p^1\times \p^1}=\z f_1\oplus \z f_2$, where $f_i$ is the fibre of the projection on the~$i$-th factor. In particular, any curve on $\p^1\times \p^1$ has a bidegree~$(d_1,d_2)$ and any element of~$\mathrm{Bir}(\p^1\times\p^1)$ has a quadridegree, which is given by the two bidegrees of the pull-backs of~$f_1$ and $f_2$, or equivalently by the two bidegrees of the polynomials which define the map.

Remark that the dynamical degree of a birational map  $\varphi$ of $\p^1\times\p^1$ is uniquely determined by the sequence of quadridegrees of $\varphi^n$.

\begin{prop}
If $\varepsilon$ is a positive real number, the following hold:

$(i)$ For any $M=\left[\begin{array}{cc}
a & b\\
c & d
\end{array}\right] \in \mathrm{SL}(2,\z)$, the maps $\theta_\varepsilon(M)$ and $\theta_s(M)$ have the same quadridegree as birational maps of $\p^1\times\p^1$, which is $(\vert a\vert,\vert b\vert,\vert c\vert,\vert d\vert)$.

$(ii)$  The homomorphism $\theta_\varepsilon$ is an embedding of $\mathrm{SL}(2,\z)$ into the Cremona group that preserves the type. 
\end{prop}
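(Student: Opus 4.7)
The plan is to prove (i) first and then derive (ii) from it using the dynamical classification of \S\ref{SubSec:Dynamic} together with the analysis of the standard embedding in \S\ref{Subsec:Standard}.

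The argument for (i) proceeds by induction on the length of a positive word $M=R_{i_k}\cdots R_{i_1}$ in $R_1,R_2$, using the following general principle: if $f,g\in\Bir(\p^1\times\p^1)$ are such that no base-point of $f$ is a base-point of $g^{-1}$ (equivalently, no exceptional curve of $g$ is contracted to a base-point of $f$), then $(f\circ g)^{\ast}=g^{\ast}\circ f^{\ast}$ on $\Pic{\p^1\times\p^1}=\z f_1\oplus\z f_2$, so the associated quadridegrees multiply as $2\times 2$ matrices. The base cases $k\le 1$ are direct computations matching $|R_i|$. For the inductive step, setting $M'=R_{i_{k-1}}\cdots R_{i_1}$, Lemma~\ref{Lem:BasePtsThetaEpsilon}(iii) (second bullet, applied to $M'$) gives exactly that the base-points $p_1,p_2$ of $\theta_\varepsilon(R_{i_k})$ are not base-points of $\theta_\varepsilon(M'^{-1})=\theta_\varepsilon(M')^{-1}$, so the composition is algebraically stable and the quadridegree of $\theta_\varepsilon(M)$ is the matrix product $|R_{i_k}|\cdot|M'|=|M|$ (no sign cancellation occurs for products of non-negative matrices). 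To extend to arbitrary $M\in\SL(2,\z)$, I would exploit that $\theta_\varepsilon(S)=(y,-x)$ and $\theta_\varepsilon(S^2)=(-x,-y)$ are biregular automorphisms of $\p^1\times\p^1$ with no base-points, so composition with either causes no degeneration and acts on quadridegrees by the same $2\times 2$ matrices as $\theta_s(S)$ and $\theta_s(S^2)$; since the Euclidean algorithm applied to the columns of $M$ writes any $M\in\SL(2,\z)$ as $\epsilon\,S^{\alpha}M_+$ with $\epsilon\in\{\pm I\}$, $\alpha\in\{0,1\}$ and $M_+$ a positive word in $R_1,R_2$, this reduces the general case to the positive one.

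Given (i), assertion (ii) follows at once. For injectivity, if $\theta_\varepsilon(M)=\id$ then $(|a|,|b|,|c|,|d|)=(1,0,0,1)$ forces $M=\pm I$, and since $\theta_\varepsilon(-I)=(-x,-y)\ne\id$ we conclude $M=I$. For preservation of type: elliptic $M\in\SL(2,\z)$ have finite order, so $\theta_\varepsilon(M)$ does too and is therefore elliptic; if $M$ is parabolic or hyperbolic, the sequence $(\deg\theta_\varepsilon(M^n))_n$ coincides with $(\deg\theta_s(M^n))_n$ by (i), and \S\ref{Subsec:Standard} already showed that the latter grows linearly in the parabolic case and with exponential rate $\vert\mu\vert>1$ in the hyperbolic case.

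The main obstacle is the extension step in (i): Lemma~\ref{Lem:BasePtsThetaEpsilon}(iii) is stated only for positive words in $R_1,R_2$, and one must verify that no subtle base-point collision occurs when inverse letters or sign changes are interspersed. The fact that $\theta_\varepsilon(S)$ and $\theta_\varepsilon(S^2)$ are biregular automorphisms of $\p^1\times\p^1$, together with a careful choice of normal form for elements of $\SL(2,\z)$ that exposes a positive subword, is what makes this reduction go through cleanly.
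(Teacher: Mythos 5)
Your overall strategy is the same as the paper's: reduce assertion $(i)$ to positive words in $R_1,R_2$, check the case $k=1$ by direct computation, and run the induction by showing that no curve contracted by $\theta_\varepsilon(M')$ lands on one of the base-points $p_1,p_2$ of $\theta_\varepsilon(R_{i_k})$ --- which is exactly what the second bullet of Lemma~\ref{Lem:BasePtsThetaEpsilon}$(iii)$ forbids. Your ``algebraic stability'' phrasing and the paper's ``the polynomials $P_1P_4+\varepsilon P_2P_3$ and $\varepsilon P_2P_4+P_1P_3$ have no common component'' argument are the same computation in two languages, and your derivation of $(ii)$ from $(i)$ (injectivity from the quadridegree $(1,0,0,1)$ plus $\theta_\varepsilon(S^2)\neq\id$, type preservation from the equality of degree sequences with those of $\theta_s$) also matches the paper.

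The one genuine flaw is the normal form you invoke to pass from positive words to all of $\mathrm{SL}(2,\z)$. It is \emph{not} true that every $M\in\mathrm{SL}(2,\z)$ can be written as $\epsilon\,S^{\alpha}M_+$ with $\epsilon\in\{\pm I\}$, $\alpha\in\{0,1\}$ and $M_+$ a non-negative word in $R_1,R_2$: left multiplication by powers of $S$ and by $-I$ only permutes the rows and changes their global signs, so every matrix of that form has each row of constant sign, whereas for instance $R_1^{-1}$ has first row $(1,-1)$ and is therefore not of this form. What is true --- and what the paper actually uses, via the presentation of $\mathrm{SL}(2,\z)$ and the normal form of elements of the free product $\mathrm{PSL}(2,\z)\simeq\Z{2}*\Z{3}$ --- is the \emph{two-sided} decomposition $M=S^iM_+S^j$, where $M_+$ is a non-negative word in $R_1=(RS)S^{-1}$ and $R_2=(RS)^2S$. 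Since your reduction only needs that $\theta_\varepsilon(S)$ is a biregular automorphism of $\p^1\times\p^1$ acting on $\Pic{\p^1\times\p^1}$ exactly as $\theta_s(S)$ does, a fact that can be composed on either side, replacing your one-sided normal form by the two-sided one repairs the argument with no further changes; as written, however, your proof does not cover matrices whose rows have mixed signs.
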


\begin{proof}
Observe first that $(i)$ implies that the kernel of $\theta_\varepsilon$ is trivial (since $\theta_\varepsilon(S^2)=(-x,-y)$ is not trivial) so that $\theta_\varepsilon$ is an embedding, and also implies that the dynamical degree of~$\theta_\varepsilon(M)$ and $\theta_s(M)$ are the same for any $M$. This shows that $(i)$ implies~$(ii)$.

We now prove assertion $(i)$. 
Since $\theta_s(S)=\left(y,\frac{1}{x}\right)$ and $\theta_\varepsilon(S)=(y,-x)$ are automorphisms of $\p^1\times\p^1$ having the same action on $\Pic{\p^1 \times \p^1}$, $\theta_\varepsilon(M)$ and $\theta_s(M)$ have the same quadridegree if and only if $\theta_\varepsilon(MS)$ and $\theta_s(MS)$ have the same quadridegree. The same holds when we multiply on the left:  $\theta_\varepsilon(M)$ and $\theta_s(M)$ have the same quadridegree if and only if $\theta_\varepsilon(SM)$ and $\theta_s(SM)$ have the same quadridegree.

Recall that $\mathrm{SL}(2,\z)$ has the presentation $\langle R,\,RS\,\vert\, S^4=(RS)^3=1,\,S^2(RS)=(RS)S^2\rangle$. It suffices thus to prove that~$\theta_\varepsilon(M)$ and $\theta_s(M)$ have the same quadridegree when $M=(RS)^{i_k}\dots S(RS)^{i_2}S(RS)^{i_1}S$, for some $i_1,\dots,i_k\in \{\pm 1\}$. For any index $i_j$ equal to $1$, we replace the $S$ immediately after by $S^{-1}$ (since $S^2$ commutes with all matrices), and obtain now a product of non-negative powers of $(RS)S^{-1}=R$ and $(RS)^2S$. We will write $R_1=R$ and $R_2=(RS)^2S$, and have $$R_1=\left[\begin{array}{cc}
1 & 1\\
0 & 1
\end{array}\right] , \ \ R_2=\left[\begin{array}{cc}
1 & 0\\
1 & 1
\end{array}\right].$$

It is thus sufficient to prove the following assertion:

\begin{center}\it $(\star)$ if $M=\left[\begin{array}{cc}
a & b\\
c & d
\end{array}\right]=R_{i_k}R_{i_{k-1}}\dots R_{i_1}$, for some $i_1,\dots,i_k\in \{1,2\}$,\\  then $a,b,c,d\geq 0$, and $\theta_s(M)$, $\theta_\varepsilon(M)$ have both quadridegree~$(a,b,c,d)$.\end{center}

We proceed now by induction on $k$. For $k=1$, Assertion~$(\star)$ can be directly checked:

Both $\theta_s(R_1)=(xy,y)$ and $\theta_{\varepsilon}(R_1)=\left(\frac{x+\varepsilon y}{\varepsilon+xy}, \varepsilon y\right)$ have quadridegree~$(1,1,0,1)$. Both $\theta_s(R_2)=(x,xy)$ and $\theta_{\varepsilon}(R_2)=\left(\frac{x}{\varepsilon}, \frac{\varepsilon(x+\varepsilon y)}{\varepsilon+xy}\right)$ have quadridegree~$(1,0,1,1)$.

Now, assume that $(\star)$ is true for $M=\left[\begin{array}{cc}
a & b\\
c & d
\end{array}\right]$, and let us prove it for $R_1M=\left[\begin{array}{cc}
a+c & b+d\\
c & d
\end{array}\right]$ and $R_2M=\left[\begin{array}{cc}
a & b\\
a+c & b+d
\end{array}\right]$. By induction hypothesis one has $$
\theta_\varepsilon(M)=((x_1:x_2),(y_1:y_2))\dasharrow ((P_{1}:P_2),(P_3:P_4)),$$ where $P_1$, $P_2$, $P_3$, $P_4\in \C[x_1,x_2,y_1,y_2]$ are bihomogeneous polynomials, of bidegree~$(a,b)$, $(a,b)$, $(c,d)$, $(c,d)$.

We have thus 
$$\begin{array}{rcl}
\theta_\varepsilon(R_1)\theta_\varepsilon(M)=\theta_\varepsilon(R_1M)=\\
((x_1:x_2),(y_1:y_2))&\dasharrow &((P_{1}P_4+\varepsilon P_2P_3:\varepsilon P_2P_4 +P_1P_3),(\varepsilon P_3:P_4)),\vspace{0.1cm}\\
\theta_\varepsilon(R_2)\theta_\varepsilon(M)=\theta_\varepsilon(R_2M)=\\
((x_1:x_2),(y_1:y_2))&\dasharrow &((P_1:\varepsilon P_2),(\varepsilon(P_{1}P_4+\varepsilon P_2P_3):\varepsilon P_2P_4 +P_1P_3)).\end{array}$$

To prove $(\star)$ for $R_1M$ and $R_2M$, it suffices to show that the polynomials $P_{1}P_4+\varepsilon P_2P_3$ and $\varepsilon P_2P_4 +P_1P_3$ have  no common component.
Suppose the converse for contradiction, and denote by $h\in \C[x_1,x_2,y_1,y_2]$ the common component. The polynomial $h$ corresponds to a curve of $\p^1\times\p^1$ that is contracted by $\theta_\varepsilon(M)$ onto a base-point of $\theta_\varepsilon(R_1)$ or~$\theta_\varepsilon(R_2)$, \emph{ i.e.}\ onto $p_1=(\varepsilon,-1)$ or $p_2=(-\varepsilon,1)$ (Lemma~\ref{Lem:BasePtsThetaEpsilon}). 
But this condition means that $(\theta_\varepsilon(M))^{-1}$ has a base-point at $p_1$ or~$p_2$. We proved in Lemma~\ref{Lem:BasePtsThetaEpsilon} that this is impossible when~$\varepsilon$ is a positive real number.
\qed\end{proof}

We now show that this construction yields infinitely many conjugacy classes of embeddings of $\mathrm{SL}(2,\mathbb{Z})$ into the Cremona group that preserve the type.

 \begin{prop}\label{ProP:epsilonespilonPrime}
If $\varepsilon$ and $\varepsilon'$ are two real positive numbers with $\varepsilon\varepsilon'\not=1$, the two groups $\theta_{\varepsilon}(\mathrm{SL}(2,\mathbb{Z}))$ and $\theta_{\varepsilon'}(\mathrm{SL}(2,\mathbb{Z}))$ are not conjugate in the Cremona group.

The standard embedding $\theta_s$ is conjugate to $\theta_1$, but $\theta_{-}(\mathrm{SL}(2,\mathbb{Z}))$ is not conjugate to $\theta_{\varepsilon}(\mathrm{SL}(2,\mathbb{Z}))$ for any  positive $\varepsilon\in \mathbb{R}$.
 \end{prop}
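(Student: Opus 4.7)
The plan is to tackle the three assertions in order.

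\emph{Conjugacy of $\theta_s$ and $\theta_1$.} I will verify directly that the birational map $\mu(x,y) = \left(\frac{x-1}{x+1},\frac{y-1}{y+1}\right)$ conjugates $\theta_s(S) = (y, 1/x)$ to $\theta_1(S) = (y, -x)$ and $\theta_s(R) = (xy, y)$ to $\theta_1(R) = \left(\frac{x+y}{1+xy}, y\right)$; since $S$ and $R$ generate $\mathrm{SL}(2,\z)$, this will yield the conjugacy. This is exactly the conjugation flagged in the paper just before the introduction of the family $\theta_{\varepsilon}$.

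\emph{Non-conjugacy of $\theta_\varepsilon$ and $\theta_{\varepsilon'}$ when $\varepsilon\varepsilon'\neq 1$.} I will also assume $\varepsilon\neq\varepsilon'$ (else there is nothing to prove). The invariant I will extract is the multiplier of the M\"obius transformation induced on the base of the (unique) fibration preserved by a parabolic generator. Viewed on $\p^1\times\p^1$, the map $\theta_\varepsilon(R)$ uniquely preserves the second projection $\pi_2$, acting on the base as $y\mapsto\varepsilon y$. Suppose $\varphi\in\Bir(\p^2)$ realises a conjugacy of the two groups; then $\varphi\theta_\varepsilon(R)\varphi^{-1} = \theta_{\varepsilon'}(P)$ for some parabolic $P\in\mathrm{SL}(2,\z)$. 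From the classification of parabolic conjugacy classes in $\mathrm{SL}(2,\z)$ recalled in \S\ref{SubSec:SL2Z}, $P = N(\pm R^n)N^{-1}$ for some $N\in\mathrm{SL}(2,\z)$ and $n\neq 0$; after replacing $\varphi$ by $\theta_{\varepsilon'}(N)^{-1}\varphi$ (which still realises the conjugacy), the equation becomes $\varphi\theta_\varepsilon(R)\varphi^{-1} = \theta_{\varepsilon'}(\pm R^n)$. Both sides uniquely preserve $\pi_2$, so $\varphi$ preserves this fibration and descends to $\bar\varphi\in\PGL(2,\C)$ on the base, conjugating $y\mapsto\varepsilon y$ to $y\mapsto\pm\varepsilon'^n y$. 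Two diagonalisable elements of $\PGL(2,\C)$ are conjugate if and only if their unordered multiplier pairs $\{\lambda,\lambda^{-1}\}$ coincide, so positivity of $\varepsilon$ and $\varepsilon'$ forces $\varepsilon = \varepsilon'^{\pm n}$. Applying the same argument to $\varphi^{-1}$ yields $\varepsilon' = \varepsilon^{\pm m}$ for some $m\neq 0$; combining the two forces $nm = \pm 1$, hence $n = \pm 1$, and finally $\varepsilon\in\{\varepsilon',\varepsilon'^{-1}\}$, contradicting $\varepsilon\neq\varepsilon'$ and $\varepsilon\varepsilon'\neq 1$.

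\emph{$\theta_-(\mathrm{SL}(2,\z))$ is not conjugate to any $\theta_\varepsilon(\mathrm{SL}(2,\z))$.} For $\varepsilon=1$, I will combine Part~1 above with Proposition~\ref{Prop:thetaMoins} to conclude $\theta_1\sim\theta_s\not\sim\theta_-$. For $\varepsilon\neq 1$, the distinguishing invariant is the existence of a parabolic element acting \emph{trivially} on the base of its (unique) preserved fibration. The element $\theta_-(R^2) = (-xy^2, y)$ is parabolic (since $\theta_-$ preserves type by Proposition~\ref{Prop:thetaMoins}), preserves $\pi_2$, and acts trivially on the base. On the other hand, the analysis in Part~2 shows that every parabolic element of $\theta_\varepsilon(\mathrm{SL}(2,\z))$ induces on the base of its preserved fibration a M\"obius transformation with multiplier $\pm\varepsilon^n$ for some $n\neq 0$, which is never trivial when $\varepsilon>0$ and $\varepsilon\neq 1$. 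So the conjugation cannot exist.

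The main technical ingredient, used throughout, is the descent of a conjugation $\varphi$ to a well-defined biregular self-map $\bar\varphi$ of $\p^1$ on the base of the preserved fibration. This rests on the uniqueness of the fibration preserved by each parabolic element (from the classification in \S\ref{SubSec:Dynamic}) together with the fact that every birational self-map of $\p^1$ lies in $\PGL(2,\C)$. Once this is in place the rest of the argument reduces to elementary conjugacy analysis in $\PGL(2,\C)$; the only subtlety is tracking the sign and inversion ambiguities through the reductions, which is handled by repeating the argument with $\varphi^{-1}$ and combining the resulting relations.
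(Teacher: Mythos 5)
Your proof is correct, and its core coincides with the paper's: both arguments use the fact that a parabolic element of linear degree growth preserves a unique rational fibration, so that any conjugation descends to the base $\p^1$, and then distinguish the groups by the conjugacy class in $\PGL(2,\C)$ of the induced base action. The difference lies in how one pins down which element of the target group the conjugate of $\theta_\varepsilon(R)$ can be. The paper invokes the fact that $R$ has no root in $\SL(2,\z)$, so that its image under a conjugation must correspond to one of $\pm R^{\pm 1}$; this is quick but leaves implicit the (true, though not entirely obvious) claim that the root-free parabolic classes are exactly those of $\pm R^{\pm 1}$. You instead allow an arbitrary target $\pm R^{\pm n}$, derive $\varepsilon=\varepsilon'^{\pm n}$, and symmetrise by running the same computation on $\varphi^{-1}$ to get $\varepsilon'=\varepsilon^{\pm m}$ and hence $nm=\pm 1$; this sidesteps the root-freeness discussion at the cost of one extra application of the same step. (Your deduction $nm=\pm1$ tacitly uses $\varepsilon\neq 1$, but the edge case is harmless: if $\varepsilon=1$ then $\varepsilon=\varepsilon'^{\pm n}$ with $n\neq0$ and $\varepsilon'>0$ already forces $\varepsilon'=1$, whence $\varepsilon\varepsilon'=1$, which is excluded.) For the last assertion the paper again restricts to root-free parabolics and distinguishes the groups by whether the base action has order $2$; your invariant --- the existence of a parabolic element acting trivially on the base of its fibration, witnessed by $\theta_-(R^2)=(-xy^2,y)$ --- works equally well for $\varepsilon\neq 1$, and your separate treatment of $\varepsilon=1$ via $\theta_1\sim\theta_s\not\sim\theta_-$ (Proposition~\ref{Prop:thetaMoins}) closes the remaining case. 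Both routes are sound; yours is marginally more self-contained, the paper's marginally shorter.
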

 
 \begin{proof}
 The proof is similar to the one of Proposition~\ref{Prop:thetaMoins}. Assume, for contradiction, that~$\theta_\varepsilon(\mathrm{SL}(2,\z))$ is conjugate to $\theta_{\varepsilon'}(\mathrm{SL}(2,\z))$; then $\theta_\varepsilon(R)=\left(\frac{x+\varepsilon y}{\varepsilon+xy}, \varepsilon y\right)$ is conjugate to some parabolic element of  $\theta_{\varepsilon'}(\mathrm{SL}(2,\mathbb{Z}))$, which has no root in the group. This implies that~$\theta_\varepsilon(R)=\left(\frac{x+\varepsilon y}{\varepsilon+xy}, \varepsilon y\right)$ or its inverse is conjugate to $\theta_{\varepsilon'}(R)=\left(\frac{x+\varepsilon' y}{\varepsilon'+xy}, \varepsilon' y\right)$ or to  $\theta_{\varepsilon'}(RS^2)=\left(\frac{-x-\varepsilon' y}{\varepsilon'+xy}, -\varepsilon' y\right)$ in $\Bir(\p^2)$.
 
These  elements are parabolic elements of the Cremona group, each of them preserves a unique rational fibration, which is $(x,y)\mapsto y$. The action on the basis being different up to conjugacy (since $\varepsilon\varepsilon'\not=\pm 1$), neither $\theta_{\varepsilon}(R)$ nor its inverse is  conjugate to  $\theta_{\varepsilon'}(R)$ or $\theta_{\varepsilon'}(RS^2)$ in~$\Bir(\p^2)$.

It remains to show that $\theta_{-}(\mathrm{SL}(2,\mathbb{Z}))$ is not conjugate to $\theta_{\varepsilon}(\mathrm{SL}(2,\mathbb{Z}))$ for any positive $\varepsilon\in \mathbb{R}$.
Every parabolic element of $\theta_{-}(\mathrm{SL}(2,\mathbb{Z}))$ without root is conjugate to $\theta_{-}(R)=(xy,-y)$, $\theta_{-}(RS^2)=(\frac{1}{xy},-\frac{1}{y})$ or their inverses, and acts thus non-trivially on the basis of the unique fibration preserved, with an action of order $2$. We get the result by observing that $\theta_{\varepsilon}(\mathrm{SL}(2,\mathbb{Z}))$ contains $\theta_\varepsilon(R)=\left(\frac{x+\varepsilon y}{\varepsilon+xy}, \varepsilon y\right)$, which is parabolic, without root and acting on the basis with an action which has not order $2$.
\qed\end{proof}

Note that in all our examples of embeddings preserving the type, the parabolic elements have a linear degree growth. One can then ask the following question (which could yield a positive answer to Question~\ref{Ques:OtherEmbedd}).
\begin{ques}
Does there exist an embedding of ~$\SL(2,\z)$ into $\Bir(\p^2)$ that preserves the type and such that the degree growth of parabolic elements is quadratic?\end{ques}
\subsection{Elliptic embeddings}\label{Sec:ReprTypEllipt}
The simplest elliptic embedding  is given by
\begin{align*}
&\theta_e\colon\mathrm{SL}(2,\mathbb{Z})\to\mathrm{Bir}(\mathbb{P}^2), &&\left[\begin{array}{cc}
a & b \\
c & d
\end{array}
\right]\mapsto (ax+by:cx+dy:z).
\end{align*}

We now generalise this embedding. Choose $n\in\mathbb{N}$ and let $\chi\colon \mathrm{SL}(2,\mathbb{Z})\to \mathbb{C}^*$ be a character such that $\chi\left(\left[\begin{array}{cc}
-1 & 0 \\
0 & -1
\end{array}
\right]\right)\not=(-1)^n$. For simplicity, we choose $\chi$ such that~$\chi(RS)=~1$, and such that $\chi(S)$ is equal to $1$ if $n$  is odd and to $\im$ if  $n$ is even. Then we define $\theta_{n}\colon \mathrm{SL}(2,\mathbb{Z})\to \Bir(\p^2)$ by

$$M=\left[\begin{array}{cc}
a & b\\
c & d
\end{array}\right]\mapsto \left(\frac{ax+b}{cx+d},\frac{\chi(M)y}{(cx+d)^n}\right).$$

The action on the first component and the fact that $\theta_n(S^2)\not=1$ imply that $\theta_n$ is an embedding. The degree of all elements being bounded, the embeddings are elliptic.

 \begin{prop}\label{ellipticemb}
 For any $n\in \mathbb{N}$,  the group $\theta_n(\mathrm{SL}(2,\mathbb{Z}))$ is conjugate to a subgroup of~$\Aut(\mathbb{F}_n)$, where $\mathbb{F}_n$ is the $n$-th Hirzebruch surface.
 
The groups $\theta_m(\mathrm{SL}(2,\mathbb{Z}))$ and $\theta_n(\mathrm{SL}(2,\mathbb{Z}))$ are conjugate in the Cremona group if and only if $m=n$.
 \end{prop}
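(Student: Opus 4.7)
The plan is to treat the two assertions separately, but both rest on the observation that the formula defining $\theta_n(M)$ matches the standard description of $\Aut(\mathbb{F}_n)$ in an affine chart of the Hirzebruch surface.

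For the first assertion, in the distinguished affine chart $\mathbb{A}^2 \subset \mathbb{F}_n$ the automorphisms of $\mathbb{F}_n$ are precisely the maps
\[
(x,y) \mapsto \left(\frac{ax+b}{cx+d},\ \frac{\alpha y + p(x)}{(cx+d)^n}\right),
\]
with $ad-bc\neq 0$, $\alpha\in \C^{*}$, and $p\in \C[x]_{\leq n}$. The formula for $\theta_n(M)$ lies in this family (with $\alpha=\chi(M)$ and $p=0$), so the natural birational identification $\p^{2} \dashrightarrow \mathbb{F}_n$ that is the identity on the common chart $\mathbb{A}^{2}$ conjugates $\theta_n(\SL(2,\z))$ into $\Aut(\mathbb{F}_n)$.

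For the second assertion, the strategy is to push any conjugating map into the Jonqui\`eres group $J$ preserving $(x,y)\mapsto x$. For $n\ge 1$, the uniqueness of the $\p^{1}$-bundle structure of $\mathbb{F}_n$ (equivalently, the uniqueness of an effective isotropic class in $\Pic{\mathbb{F}_n}$) forces $(x,y)\mapsto x$ to be the only rational fibration invariant under $\theta_n(\SL(2,\z))$; consequently any birational conjugation $\varphi$ between $\theta_n(\SL(2,\z))$ and $\theta_m(\SL(2,\z))$ must preserve it. The case $n=0$ versus $n\ge 1$ is distinguished by the fact that $\theta_0(\SL(2,\z)) \subset \Aut(\p^{1}\times\p^{1})$ preserves \emph{two} distinct rulings, which is a property preserved under conjugation. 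Assuming $n,m\ge 1$, we thus write $\varphi(x,y)=(\phi(x),\psi_x(y))\in J$; the base action $\phi$ must normalise $\PSL(2,\z)$ inside $\PGL(2,\C)$, and after composing with an appropriate element of $\theta_m(\SL(2,\z))$ we reduce to $\phi=\id$.

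Requiring $\varphi$ to commute with the common central involution $\theta_n(S^{2})=\theta_m(S^{2})=(x,-y)$ then restricts $\psi_x(y)$ to one of the two forms $\lambda(x)\,y$ or $\mu(x)/y$. Writing $\varphi\circ \theta_n(R)\circ \varphi^{-1} \in \theta_m(\SL(2,\z))$ translates into $\lambda(x+1)/\lambda(x)=C$ (a constant), and analogously for $\mu$; since a nonzero rational function has only finitely many zeros and poles while such a translation-equivariance equation would propagate them by integers indefinitely, $\lambda$ (or $\mu$) must be constant and $C=1$. Substituting $M=S$ now produces the identity $(-x)^{n-m}=\text{const.}$ in the first case, or $(-x)^{n+m}=\text{const.}$ in the second, forcing $n=m$ (respectively $n=m=0$), as required.

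The step I anticipate to be most delicate is the uniqueness of the invariant rational fibration for $n\ge 1$: a priori $\theta_n(\SL(2,\z))$ could preserve some further fibration on $\p^{2}$ whose fibers on $\mathbb{F}_n$ are not $\p^{1}$'s. This is handled by verifying that any invariant rational fibration pulls back to an effective class of self-intersection $0$ in $\Pic{\mathbb{F}_n}$, and that for $n\ge 1$ the only such class up to positive multiples is the natural fiber class. The normaliser computation for $\PSL(2,\z)$ inside $\PGL(2,\C)$ used in the reduction $\phi=\id$ is a secondary technical point, but is classical.
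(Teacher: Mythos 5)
Your proof of the first assertion is essentially the paper's (the paper realises $\theta_n(\SL(2,\z))$ inside $\Aut(\p(1,1,n))$ fixing the singular point and blows it up, which is the same chart computation). For the second assertion you take a genuinely different route. The paper observes that the action on the base $\p^1$ of the ruling is the standard $\PSL(2,\z)$-action, which has no finite orbits, hence $\theta_n(\SL(2,\z))$ has no finite orbits on $\mathbb{F}_n$; therefore no equivariant elementary link is possible and the group is birationally rigid, so any conjugation would force $\mathbb{F}_n\cong\mathbb{F}_m$, i.e. $m=n$. You instead pin the conjugating map down explicitly inside the de Jonqui\`eres group. This is longer but, once complete, describes all possible conjugations rather than merely excluding them; the final steps (centraliser of $(x,-y)$, the functional equation $\lambda(x+1)=C\lambda(x)$, and the substitution of $S$ giving $(-x)^{m-n}=\mathrm{const}$) are correct.

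The gap is exactly in the step you flag as delicate. An invariant rational fibration is a priori only a pencil, and a pencil with base points has general member of strictly \emph{positive} self-intersection (the square of the class counts the base points; a pencil of lines through a point of $\p^2$ has class of square $1$). So your claim that any invariant rational fibration pulls back to an effective class of square $0$ in $\Pic{\mathbb{F}_n}$ is false unless you first show the invariant pencil is base-point-free. Its base locus is a finite invariant set, so what you need is precisely the no-finite-orbits observation that drives the paper's proof — and which your write-up never establishes. Even granting it, $\Pic{\mathbb{F}_n}$ contains effective square-zero classes other than multiples of the fibre $f$ (e.g. $f+s$ on $\mathbb{F}_2$, with $s$ the negative section), so "the only such class up to positive multiples is the fibre class" also needs repair: one must use that the general fibre is irreducible, hence meets $s$ non-negatively, whereas $(af+bs)^2=0$ with $b>0$ forces $(af+bs)\cdot s=-nb/2<0$. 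Two minor further points: the normaliser of $\PSL(2,\z)$ in $\PGL(2,\C)$ is $\PGL(2,\z)$, not $\PSL(2,\z)$, so your reduction leaves $\phi=\id$ \emph{or} $\phi(x)=-x$ (the latter case closes by the same computation); and one should say why $\varphi$ conjugates $\theta_n(S^2)$ to $\theta_m(S^2)$ — it does, because both are the unique non-trivial central elements of their groups.
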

 \begin{proof}
 If $n=0$, the embedding $(x,y)\mapsto ((x:1),(y:1))$ of $\C^2$ into $\p^1\times\p^1=\mathbb{F}_0$ conjuga\-tes~$\theta_0(\mathrm{SL}(2,\mathbb{Z}))$ to a subgroup of $\Aut(\mathbb{F}_0)$.

For $n\ge 1$, recall that the weighted projective space $\p(1,1,n)$ is equal to $$\p(1,1,n)=\left\{(x_1,x_2,z)\in \C^3\backslash \{0\}\ \Big|\ (x_1,x_2,z)\sim (\mu x_1,\mu x_2,\mu^n z),\ \mu\in \C^*\right\}.$$
 
 The surface $\p(1,1,1)$ is equal to $\p^2$, and the surfaces $\p(1,1,n)$ for $n\ge 2$ have one singular point, which is $(0:0:1)$.
 
 For any $n\ge 1$, the embedding $(x,y)\mapsto (x:y:1)$ of $\C^2$ into $\p(1,1,n)$ conjuga\-tes~$\theta_n(\mathrm{SL}(2,\mathbb{Z}))$ to a subgroup of~$\Aut(\p(1,1,n))$ that fixes the point $(0:0:1)$. The blow-up of this fixed point gives the Hirzebruch surface $\mathbb{F}_n$, and conjugates thus $\theta_n(\mathrm{SL}(2,\mathbb{Z}))$ to a subgroup of~$\Aut(\mathbb{F}_n)$.

 In all cases $n\ge 0$, the group preserves the fibration $\mathbb{F}_n\to \p^1$ corresponding to $(x,y)\mapsto~x$. The action on the basis of the fibration corresponds to the standard homomorphism $\SL(2,\z)\to \mathrm{PSL}(2,\z)\subset \PGL(2,\mathbb{C})=\Aut(\p^1)$. This action has no orbit of finite size on $\p^1$. In particular, there is no orbit of finite size on $\mathbb{F}_n$. This shows that the subgroup of $\Aut(\mathbb{F}_n)$ corresponding to $\theta_n(\SL(2,\z))$ is birationally rigid for $n\not=1$, \emph{i.e.\ }that it is not conjugate to any group of automorphisms of any other smooth projective surface. This shows  that~$\theta_m(\mathrm{SL}(2,\mathbb{Z}))$ and $\theta_n(\mathrm{SL}(2,\mathbb{Z}))$ are conjugate in the Cremona group only when $m=~n$.
 \qed\end{proof}
 
\subsection{Parabolic embeddings}\label{Sec:ReprTypParab}
Recall that the morphism $\theta_0$ defined in \S \ref{Sec:ReprTypEllipt} can also be viewed as follow: $M=\left[\begin{array}{cc}
a & b\\
c & d
\end{array}\right]\mapsto \left(\frac{ax+b}{cx+d},\chi(M)y\right)$;
it preserves the fibration $(x,y)\mapsto x$. Remembering that  $\chi(S)=\im$ and~$\chi(RS)=~1$ we have 

\begin{align*}
&\theta_0(S)=\left(-\frac{1}{x},\im y\right) && \text{and} &&\theta_0(RS)=\left(\frac{x-1}{x},y\right).
\end{align*}

We will ``twist'' $\theta_0$ in order to construct parabolic embeddings. Recall that~$\mathrm{SL}(2,\mathbb{Z})$ acts via $\theta_0$ on the projective line; the element $\left[\begin{array}{cc}
a & b\\
c & d
\end{array}\right]$ acts as $x\dasharrow \frac{ax+b}{cx+d}$. The group is countable so a very general point of the line has no isotropy. Let $P\in \mathbb{C}(x)$ be a rational function with $m$ simple poles and $m$ simple zeroes, where~$m>0$, and such that the $2m$ corresponding points of $\mathbb{C}$ are all on different orbits under the action of $\mathrm{SL}(2,\mathbb{Z})$ and have no isotropy. We denote by $\varphi_P=(x,y\cdot P(x))$ the associated birational map; it preserves the fibration and commutes with $\theta_0(S^2)=(x,-y)$.

We choose
\begin{align*}
\theta_P(S)=\theta_0(S)=\left(-\frac{1}{x},\im y\right)&& \text{and} &&\theta_P(RS)=\varphi_P\circ  \theta_0(RS)\circ \varphi_P^{-1},
\end{align*}
therefore
\begin{align*}
&\theta_P(S)=\left(-\frac{1}{x},\im y\right) && \text{and} &&\theta_P(RS)=\left(\frac{x-1}{x},y\cdot \frac{P(\frac{x-1}{x})}{P(x)}\right).
\end{align*}

The maps $\varphi_P$ and $\theta_P(S^2)$ commute so $\theta_P(RS)$ and $\theta_P(S^2)$ commute too. Then, by definition of~$\theta_P(S)$ and~$\theta_P(RS)$ there is a unique morphism $\theta_P\colon \mathrm{SL}(2,\mathbb{Z})\to \mathrm{Bir}(\mathbb{C}^2)$. 

\begin{prop}\label{parabolicemb1}
The morphism $\theta_P$ is a parabolic embedding for any $P\in \mathbb{C}(x)$.
\end{prop}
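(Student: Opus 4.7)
Using the presentation $\mathrm{SL}(2,\mathbb{Z}) = \langle R, S \mid S^4 = (RS)^3 = 1,\ [S^2, RS] = 1 \rangle$, I would check the three relations on $\theta_P(S) = (-1/x, \mathbf{i} y)$ and $\theta_P(RS) = \varphi_P\, \theta_0(RS)\, \varphi_P^{-1}$. First, $\theta_P(S)^2 = (x, -y)$ gives $\theta_P(S)^4 = \mathrm{id}$. Second, a direct computation shows $\theta_0(RS)^3 = \mathrm{id}$, hence $\theta_P(RS)^3 = \mathrm{id}$ by conjugation. Third, $\theta_P(S^2) = (x, -y)$ commutes with both $\varphi_P = (x, y P(x))$ and $\theta_0(RS) = ((x-1)/x, y)$ (each factor acts trivially on the relevant coordinate), hence with $\theta_P(RS)$. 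For injectivity, the projection onto the first coordinate realises the standard faithful representation $\mathrm{PSL}(2,\mathbb{Z}) \hookrightarrow \mathrm{PGL}(2,\mathbb{C})$, so $\ker \theta_P \subseteq \{\pm I\}$; and $\theta_P(-I) = \theta_P(S^2) = (x, -y) \neq \mathrm{id}$ excludes this last case.

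\textbf{Reduction to degree growth.} The image of $\theta_P$ lies in the subgroup of $\mathrm{Bir}(\mathbb{P}^2)$ preserving the rational fibration $(x,y)\mapsto x$, with every element of the special form $(f(x), y\, g(x))$ where $f \in \mathrm{PGL}(2,\mathbb{C})$ and $g \in \mathbb{C}(x)^*$. Such de Jonqui\`eres maps have dynamical degree $1$, so each $\theta_P(M)$ is either elliptic or parabolic with linear growth. To prove $\theta_P$ is a parabolic embedding, it therefore suffices to show that for every $M$ of infinite order the degree of $\theta_P(M)^n$ is unbounded. Writing $\theta_P(M) = (f_M, y g_M)$, the iterates are $\theta_P(M)^n = (f_M^n, y\, G_n^M)$ with $G_n^M(x) = \prod_{k=0}^{n-1} g_M(f_M^k(x))$, and the quadridegree on $\mathbb{P}^1 \times \mathbb{P}^1$ grows with $\deg G_n^M$.

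\textbf{Parabolic case.} After conjugation, a parabolic $M \in \mathrm{SL}(2,\mathbb{Z})$ reduces to $\pm R^a$. An explicit calculation gives $\theta_P(R) = \bigl(x+1,\ -\mathbf{i}\, y\, P(x+1)/P(-1/x)\bigr)$, so that the $y$-factor of $\theta_P(R)^n$ is $(-\mathbf{i})^n \prod_{k=0}^{n-1} P(x+k+1)/P(-1/(x+k))$. Its zeros and poles are the $4mn$ candidate points $\alpha_i - k - 1$ and $-1/\beta_j - k$ (zeros), $\beta_j - k - 1$ and $-1/\alpha_i - k$ (poles), for $1\le i,j\le m$ and $0 \le k < n$, where the $\alpha_i, \beta_j$ are the zeros and poles of $P$. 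Each such point is of the form $R^{\bullet}(\alpha_\star)$, $R^{\bullet}S(\alpha_\star)$, $R^{\bullet}(\beta_\star)$ or $R^{\bullet}S(\beta_\star)$, hence lies in the $\mathrm{SL}(2,\mathbb{Z})$-orbit of some zero or pole of $P$. The hypothesis that these $2m$ orbits are distinct, together with the absence of isotropy, then forces all $2mn$ zeros to be pairwise distinct, all $2mn$ poles to be pairwise distinct, and no zero-pole cancellation. Hence $\deg G_n = 2mn$, unbounded, so $\theta_P(\pm R^a)$ is parabolic.

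\textbf{Hyperbolic case and main obstacle.} For hyperbolic $M$, the M\"obius map $f_M$ has infinite order with two irrational fixed points $p_\pm$, and the same cocycle-based argument applies once one exhibits a zero or pole of $g_M$ lying off $\{p_\pm\}$: the backward $f_M$-orbit of such a point then produces infinitely many distinct zeros or poles of $G_n^M$ as $n$ grows, so $\deg G_n^M \to \infty$. The main technical obstacle is to control $g_M$ for an arbitrary word $M$ in $S$ and $RS$ and to verify that the orbit-distinctness hypothesis on $P$ rules out any accidental cancellation in the resulting product; the cleanest route is to view $M \mapsto g_M$ as a $1$-cocycle $\mathrm{SL}(2,\mathbb{Z}) \to \mathbb{C}(x)^*$ (with respect to the standard $\mathrm{PGL}(2,\mathbb{C})$-action on $x$), to note that it differs from the constant cocycle of $\theta_0$ by a factor built from $P$, and to argue by induction on word length that the orbit hypothesis keeps every relevant pole or zero ``new'' along any infinite-order trajectory.
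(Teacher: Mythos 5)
Your framework agrees with the paper's up to the key reduction: the image of $\theta_P$ consists of maps $(f(x),y\,g(x))$ preserving the fibration $(x,y)\mapsto x$, so every element is elliptic or parabolic and the whole issue is to show that the degree of the iterates is unbounded for every $M$ of infinite order. Your verification that $\theta_P$ is a well-defined embedding is correct and matches the paper, and your treatment of the parabolic matrices $\pm R^a$ is complete: the explicit product formula for the $y$-cocycle, the identification of its candidate zeros and poles with points $w(\alpha_i)$, $w(\beta_j)$ for distinct $w\in\PSL(2,\z)$, and the use of the distinct-orbits and trivial-isotropy hypotheses to rule out coincidences and cancellations all work.

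The genuine gap is the hyperbolic case, which is the bulk of the proposition and which you explicitly leave as ``the main technical obstacle.'' Your plan requires at a minimum that $g_M$ be non-constant for every hyperbolic $M$ (otherwise $\theta_P(M)$ would be elliptic), and this is exactly the cancellation question you defer: since $g_{RS}=(P\circ f_{RS})/P$ is a coboundary, the cocycle telescopes completely for $RS$ itself ($\theta_P(RS)$ has order $3$), so cancellation is a real phenomenon and one must prove it does not propagate to longer words. The paper resolves precisely this point by an induction on word length: it reduces to cyclically reduced words $h=\beta\alpha^{i_n}\cdots\beta\alpha^{i_1}$ (and $h\beta^2$) with $\alpha=\theta_P(RS)$, $\beta=\theta_P(S)$, fixes a single zero or pole $p$ of $P$, and tracks the three fibres $F_0,F_1,F_2$ over the $\langle RS\rangle$-orbit of $p$ inside the union $\Sigma$ of fibres over the $\SL(2,\z)$-orbit of $p$. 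The structural facts driving the induction are that $\varphi_P^{\pm 1}$ contracts $F_0$ and no other fibre of $\Sigma$, so $\alpha^{\pm1}$ contracts exactly two of $F_0,F_1,F_2$ and no other fibre of $\Sigma$, while $\beta$ and $\beta^3$ move $F_0,F_1,F_2$ to fibres of $\Sigma$ outside $\{F_0,F_1,F_2\}$ (this is where the distinct-orbits and trivial-isotropy hypotheses enter). One concludes that $h^k$ contracts the distinct fibres $F_0,h^{-1}(F_0),\dots,h^{-k}(F_0)$, so the number of base-points of $h^k$ grows at least linearly and $h$ cannot be elliptic. This is exactly the induction you gesture at but do not carry out; without it the proof is incomplete.
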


\begin{proof}
The action on the basis of the fibration and the fact that $\theta_P(S^2)\not=\mathrm{id}$ imply that~$\theta_P$ is an embedding. It remains to show that any element of infinite order is sent onto a parabolic element.

Writing $\alpha=\theta_P(RS)$ and $\beta=\theta_P(S)$, it suffices to show that $h$ or $h\beta^2$ is parabolic, where
\begin{align*}
&h=\beta\alpha^{i_n}\beta\dots \alpha^{i_2}\beta \alpha^{i_1},&& n\geq 1 &&\text{ and } &&i_1,\dots,i_n\in \{-1,1\}.
\end{align*}
We view our maps acting on $\p^1\times \p^1$. The fibration given by the projection on the first factor is preserved by $h$, which is thus either parabolic or elliptic. The first possibility occurs if the sequence of number of base-points of $h^k$ grows linearly and the second if the sequence is bounded.

Let $p\in \C$ be a pole or a zero of $P$. Let $F_0\subset \mathbb{P}^1\times\mathbb{P}^1$ be the fibre of $(p:1)$ and let~$\Sigma\subset \mathbb{P}^1\times\p^1$ be the $($countable$)$ union of fibres of points that belong to the orbit of~$(p:1)$ under the action of $\mathrm{SL}(2,\mathbb{Z})$.

Recall that $\theta_0(RS)$ is an automorphism of $\mathbb{P}^1\times\mathbb{P}^1$. Set $F_1=\theta_0(RS)(F_0)$ and $F_2=\theta_0(RS)(F_1)$; remark that $F_0=\theta_0(RS)(F_2)$. Then $\varphi_P$ and its inverse contract $F_0$ on a point of $F_0$ but send isomorphically $F_1$ and~$F_2$ onto themselves. The map $\alpha$ is the conjugate of $ \theta_0(RS)$ by $\varphi_P$, so it contracts~$F_0$ and $F_2$ on points lying respectively on~$F_1$ and $F_0$, but sends isomorphically $F_1$ onto $F_2$ and doesn't contract any other fibre contained in $\Sigma$. Similarly~$\alpha^{-1}$ contracts $F_0$ and~$F_1$ on points lying on $F_2$ and $F_0$ and neither contracts $F_2$ nor any other fibre of~$\Sigma$.

Each fibre is preserved by $\beta^2$, but $\beta$ and $\beta^3$ send $F_0$, $F_1$, $F_2$ onto three other fibres contained in~$\Sigma$. Then~$\alpha^{\pm 1}\beta$ and~$\alpha^{\pm 1}\beta^3$ send isomorphically $F_0$ onto a fibre contained in~$\Sigma\setminus\{F_i\}$. By induction on $n$, we obtain that for any $k<0$, $h^k$ and $(h\beta^2)^k$ send isomorphically~$F_0$ onto a curve in $\Sigma\setminus\{F_i\}$. 

Then we note that $\alpha$ and $\alpha^{-1}$ contract $F_0$ on a point contained in one of the $F_i$, point sent by~$\beta$ onto an other point not contained in the $F_i$'s. So, by induction on $n$, for any $k>0$ both $h^k$ and $(h\beta^2)^k$ contract~$F_0$ on a point not contained in the $F_i$'s and for which the fibre belongs to $\Sigma$.

\smallskip

For each integer $k>0$, the fibre $F_0$ is contracted by~$h^k$ and by~$(h\beta^2)^k=h^k(\beta^{2k})$ on a point of $\Sigma$. Moreover, for each integer $k<0$, $F_0$ is sent isomorphically by~$h^{k}$ onto a fibre contained in $\Sigma$. Set $F_i'=h^{-i}(F_0)$ for all $i>0$; we obtain that~$h^k$ and $(h\beta^2)^k$ contract~$F_0$ and~$F_1',\ldots, F_k'$ for each inte\-ger~$k>0$.  This means that the number of base-points of $h^k$ and~$(h\beta^2)^k$ is at least equal to $k$.  As~$h$ and~$h\beta^2$ preserve the fibration, they are parabolic. 
\qed\end{proof}

\begin{prop}\label{parabolicemb2}
When $P$ varies, we obtain infinitely many parabolic embeddings.
\end{prop}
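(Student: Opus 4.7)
My plan is to adapt the strategy of Proposition~\ref{ProP:epsilonespilonPrime}. Suppose, for contradiction, that some $\varphi\in\Bir(\p^2)$ conjugates $\theta_P(\SL(2,\z))$ onto $\theta_{P'}(\SL(2,\z))$. Then $\varphi\theta_P(R)\varphi^{-1}$ is a parabolic element of $\theta_{P'}(\SL(2,\z))$, so it equals $\theta_{P'}(\pm R^a)$ for some $a\in\z\setminus\{0\}$ (up to replacing $R^a$ by a conjugate in $\SL(2,\z)$, every parabolic matrix takes this shape). The goal is then to exhibit a discrete $\Bir(\p^2)$-conjugacy invariant of parabolic maps with linear degree growth that depends on $P$ strongly enough to force $m=|a|\,m'$, where $m$, $m'$ denote the numbers of simple zeros of $P$, $P'$.

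The key computation is the degree growth of $\theta_P(R)$. An induction refining the combinatorial analysis at the end of the proof of Proposition~\ref{parabolicemb1}, and using that the $2m$ zeros and poles of $P$ lie on pairwise disjoint $\SL(2,\z)$-orbits (which is exactly what prevents any cancellation in the bihomogeneous polynomials defining $\theta_P(R^n)$), shows that on $\p^1\times\p^1$ the map $\theta_P(R^n)$ has quadridegree $((1,0),(2mn,1))$. In particular, its number of base-points grows linearly with slope proportional to $m$. Set $\nu(f):=\lim_n N(f^n)/n$, where $N(f^n)$ is the number of base-points of $f^n$ on the minimal rational fibration surface of $f$; then $\nu(\theta_P(R))$ is a positive multiple of $m$, and the same induction applied to $\theta_{P'}(\pm R^a)$ gives $\nu(\theta_{P'}(\pm R^a))=|a|\cdot\nu(\theta_{P'}(R))$, proportional to $|a|\,m'$.

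The conjugation-invariance of $\nu$ would then come from the uniqueness of the preserved rational fibration for a parabolic map of linear growth: any $\varphi\in\Bir(\p^2)$ intertwining $f$ with $g=\varphi f\varphi^{-1}$ necessarily sends the unique fibration of $f$ onto the unique one of $g$, identifying their base~$\p^1$s up to isomorphism and matching their special fibers outside a finite exceptional locus; hence $N(g^n)$ and $N(f^n)$ differ only by a bounded quantity, leaving the asymptotic slope unchanged. Equating $\nu(\theta_P(R))$ with $\nu(\theta_{P'}(\pm R^a))$ forces $m=|a|\,m'$, i.e.\ $m'\mid m$. Choosing, for each $k\geq 1$, a valid rational function $P_k$ with $m_k=p_k$ zeros and $p_k$ poles, where $p_k$ is the $k$-th prime, one has $p_{k'}\nmid p_k$ for all $k\neq k'$; therefore the embeddings $\theta_{P_k}$ are pairwise non-conjugate, producing infinitely many conjugacy classes of parabolic embeddings.

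The main obstacle is the rigorous conjugation-invariance of the slope $\nu$: a general birational conjugation $\varphi$ can both create and destroy exceptional fibers, and one must bound this discrepancy uniformly in $n$ so that it does not affect the asymptotic slope. The cleanest way around this is to reformulate $\nu$ cohomologically: on a common $f$-equivariant minimal resolution, $\nu(f)$ can be read off from the size of the $2\times 2$ Jordan block attached to the eigenvalue $1$ of $f^{*}$ acting on the Picard group, which is manifestly invariant under $\Bir(\p^2)$-conjugation since conjugation induces an isomorphism of the corresponding equivariant resolutions.
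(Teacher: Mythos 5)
Your strategy (an asymptotic count of base-points as a conjugacy invariant) is genuinely different from the paper's, which instead pins down the shape of any conjugating map directly: it must preserve the unique invariant fibration, act on the base by an element of $\PSL(2,\z)$, hence after composing with an element of $\theta_Q(\SL(2,\z))$ it has the form $(x,a(x)y^{\pm 1})$, and this forces an explicit functional equation relating $P$ and $Q$ which fails when $P$ varies. Your route could in principle work, but as written it has three genuine gaps. First, the reduction to $\theta_{P'}(\pm R^a)$ is unjustified: $\theta_{P'}$ is a \emph{parabolic} embedding, not a type-preserving one, so the images of \emph{hyperbolic} matrices are also parabolic birational maps. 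The conjugate $\varphi\theta_P(R)\varphi^{-1}$ is therefore only known to be $\theta_{P'}(M)$ for some $M$ of infinite order, and your divisibility argument then requires computing $\nu(\theta_{P'}(M))$ (and showing it is a positive integer multiple of the same constant times $m'$) for \emph{every} infinite-order $M$, not just powers of $R$. Second, the central computation -- the quadridegree of $\theta_P(R^n)$ and, more importantly, the passage from quadridegree to the number of base-points including infinitely near ones, with a proportionality constant that is the \emph{same} for $P$ and $P'$ -- is asserted rather than carried out; the whole argument rests on the exact value of that constant.

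Third, your proposed repair of the conjugation-invariance of $\nu$ via "the size of the $2\times 2$ Jordan block attached to the eigenvalue $1$" does not work: the size of that Jordan block only distinguishes bounded from linear from quadratic growth and carries no information about the slope $m$. The invariance you need is nevertheless true, and for a much more elementary reason than the one you gesture at: the number of base-points (infinitely near ones included) is subadditive under composition, $N(u\circ v)\le N(u)+N(v)$, whence $\vert N(\varphi f^n\varphi^{-1})-N(f^n)\vert\le N(\varphi)+N(\varphi^{-1})$ is bounded independently of $n$ and the slope $\lim_n N(f^n)/n$ is a genuine $\Bir(\p^2)$-conjugacy invariant. (This additive control is exactly why you should use base-point counts rather than degrees, whose slope is only controlled up to a multiplicative factor under conjugation.) With that fix, and with the computation of point two carried out for arbitrary words, your argument would close; note also that running the conjugation in both directions gives $m\mid m'$ and $m'\mid m$ simultaneously, so distinct values of $m$ already suffice and the primes are not needed.
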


 \begin{proof}Let $P,Q\in \C(x)$, and suppose that $\theta_P(\mathrm{SL}(2,\z))$ is conjugate to $\theta_Q(\mathrm{SL}(2,\z))$ by some birational map $\varphi$ of~$\p^1\times \p^1$. Then $\varphi$ preserves the fibration $(x,y)\mapsto x$, which is the unique fibration preserved by the two groups. Its action on the basis of the fibration is an element $\psi\in \PGL(2,\C)$  that normalises $\mathrm{PSL}(2,\z)\subset \mathrm{PSL}(2,\C)=\PGL(2,\C)$. This means that $\psi\in \mathrm{PSL}(2,\z)$.
 Replacing $\varphi$ by its product with an element of $\theta_Q(\mathrm{SL}(2,\z))$, we can thus assume that~$\varphi$ acts trivially on the basis.
 
This means that  $\varphi$ is equal to $\left(x,\frac{a(x)y+b(x)}{c(x)y+d(x)}\right)$ for some $a,b,c,d\in \C(x)$, $ad-bc\not=0$. Since $\varphi$ conjugates $\theta_P(S)=\theta_Q(S)=(-\frac{1}{x},\im y)$ to itself or its inverse, the map $\varphi$ is equal to~$(x,a(x)y^{\pm 1})$ where $a\in \C(x)$, $a(-\frac{1}{x})=\pm a(x)$.

The map $\varphi$ conjugates $\theta_P(RS)=\left(\frac{x-1}{x},y\cdot \frac{P(\frac{x-1}{x})}{P(x)}\right)$  to $\theta_Q(RS)=\left(\frac{x-1}{x},y\cdot \frac{Q(\frac{x-1}{x})}{Q(x)}\right)$ or to~$\theta_Q(RS^3)=\left(\frac{x-1}{x},-y\cdot \frac{Q(\frac{x-1}{x})}{Q(x)}\right)$ in $\Bir(\p^1\times \p^1)$. Assume that 
\begin{align*}
&\varphi=(x,a(x)y) && \text{where }a\in \C(x),\, a(-\frac{1}{x})=a(x);
\end{align*}
then $\varphi\theta_P(RS)\varphi^{-1}=\left(\frac{x-1}{x},y\cdot\frac{a\left(\frac{x-1}{x}\right)P\left(\frac{x-1}{x}\right)}{a(x)P(x)}\right)$. Thus $\varphi\theta_P(RS)\varphi^{-1}=\theta_Q(RS),$ resp. $\theta_Q(RS^3)$ if and only if 
\begin{align*}
&\frac{a\left(\frac{x-1}{x}\right)}{a(x)}=\frac{P(x)Q(\frac{x-1}{x})}{Q(x)P\left(\frac{x-1}{x}\right)}, && \text{resp. }\frac{a\left(\frac{x-1}{x}\right)}{a(x)}=-\frac{P(x)Q(\frac{x-1}{x})}{Q(x)P\left(\frac{x-1}{x}\right)}
\end{align*}
since $a(x)$ is invariant under the homography $x\mapsto -\frac{1}{x}$, the same holds for $\frac{P(x)Q(\frac{x-1}{x})}{Q(x)P\left(\frac{x-1}{x}\right)}$. This implies, in both cases, the following condition on $P$ and $Q$
$$\frac{P(x)P(1+x)}{P\left(-\frac{1}{x}\right)P\left(\frac{x-1}{x}\right)}=\frac{Q(x)Q(1+x)}{Q\left(-\frac{1}{x}\right)Q\left(\frac{x-1}{x}\right)}.$$
We get the same formula when $\varphi$ is equal to $(x,a(x)y^{-1})$ where $a\in \C(x)$, $a(-\frac{1}{x})=-a(x)$.
When $P$ varies, we thus obtain infinitely many parabolic embeddings.
\qed\end{proof}

\subsection{Hyperbolic embeddings}\label{Sec:ReprTypHyperZ}

In this section, we "twist" the standard elliptic embedding $\theta_e$ defined in $\S\ref{Sec:ReprTypEllipt}$ to get many hyperbolic embeddings of $\SL(2,\z)$ into $\Bir(\p^2)$. Recall that $\theta_e$ is given by 
\begin{align*}
&\theta_e\colon\mathrm{SL}(2,\mathbb{Z})\to\mathrm{Bir}(\mathbb{P}^2), &&\left[\begin{array}{cc}
a & b \\
c & d
\end{array}
\right]\mapsto (ax+by:cx+dy:z).
\end{align*}
The group $\theta_e(\SL(2,\z))$ preserves the line $L_z$ of equation $z=0$, and acts on it via the natural maps $\SL(2,\z)\to \PSL(2,\z)\subset \PSL(2,\C)=\Aut(L_z)$.

We choose $\mu \in \C^{*}$ such that the point $p=(\mu:1:0)\in L_z$ has a trivial isotropy group under the action of $\PSL(2,\z)$,  fix an even integer $k>0$, and then define  a morphism $\theta_{k}\colon \SL(2,\z)\to \Bir(\p^2)$ by the following way:

$$\begin{array}{rcl}\theta_{k}(S)&=&\theta_e(S)=(y:-x:z)\\
\theta_{k}(RS)&=&\psi \theta_e(RS) \psi^{-1}\end{array}$$
where $\psi$ is the conjugation of $\psi'=(x^k:yx^{k-1}+z^k:zx^{k-1})$ by $(x+\mu y:y:z)$. 

Note that $\psi'$ restricts to an automorphism of the affine plane where $x\not=0$,  commutes with $\theta_e(S^2)=(x:y:-z)$ and acts trivially on $L_z$.
Since $\psi$ commutes with $\theta_e(S^2)=\theta_{k}(S^2)$, the element $\theta_{k}(RS)$ commutes with $\theta_{k}(S^2)$, and $\theta_{k}$ is thus a well-defined morphism. The fact that $\psi$ preserves $L_z$ and acts trivially on it implies that the action of $\theta_e$ and $\theta_{k}$ on $L_z$ are the same, so $\theta_{k}$ is an embedding.

\begin{lemm}\label{Lem:HypDeg}Let $m$ be a positive integer, and let $a_1,\dots,a_m,b_1,\dots,b_m\in \{\pm 1\}$.
The birational map
$$\theta_k(S^{b_m}(RS)^{a_m}\cdots S^{b_1}(RS)^{a_1})$$ has degree $k^{2m}$ and exactly $2m$ proper base-points, all lying on $L_z$, which are 
$$\begin{array}{l}
p, ((RS)^{a_1})^{-1}(p),(S^{b_1}(RS)^{a_1})^{-1}(p),((RS)^{a_2}S^{b_1}(RS)^{a_1})^{-1}(p),\\
\dots,((RS)^{a_m}\cdots S^{b_1}(RS)^{a_1})^{-1}(p),(S^{b_m}(RS)^{a_m}\cdots S^{b_1}(RS)^{a_1})^{-1}(p),\end{array}$$
where the action of $R,RS\in \SL(2,\z)$ on $L_z$ is here the action via $\theta_e$ or $\theta_k$.
\end{lemm}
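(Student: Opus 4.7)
The plan is to induct on $m$, exploiting that each $\theta_k(S^{b_j})$ is a linear automorphism of $\p^2$, so contributes neither base points nor degree growth. The only nontrivial factor in each block is $\theta_k((RS)^{\pm 1})=\psi\,\theta_e((RS)^{\pm 1})\,\psi^{-1}$, and the whole statement reduces to analysing this single map and verifying that no cancellation of degree occurs when composing it with the previously built map.

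The preliminary analysis of $\psi$ goes as follows. In the affine chart $\{x\ne 0\}$, with coordinates $Y=y/x$, $Z=z/x$, the map $\psi'$ reads $(Y,Z)\mapsto (Y+Z^k,Z)$, an automorphism of the chart. Hence $\psi'$ and $\psi'^{-1}$ each have a unique proper base point at $(0\!:\!1\!:\!0)$, contract the line $L_x=\{x=0\}$ to that point, and fix $L_z$ pointwise away from $(0\!:\!1\!:\!0)$. Conjugating by the translation $\phi=(x+\mu y:y:z)$, the same assertions hold for $\psi$ and $\psi^{-1}$ with $p=\phi(0\!:\!1\!:\!0)$ and $\phi(L_x)$ in place of $(0\!:\!1\!:\!0)$ and $L_x$. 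From this I would deduce that $\theta_k((RS)^{\pm 1})$ has degree $k^2$ and exactly two proper base points on $L_z$: the point $p$ (inherited from $\psi^{-1}$) and $(RS)^{\mp 1}(p)$ (the pullback under $\theta_e((RS)^{\pm 1})\psi^{-1}$ of the base point of $\psi$, using that $\psi$ is trivial on $L_z$). Triviality of the $\PSL(2,\z)$-isotropy of $p$ guarantees that these two points are distinct, and that $\theta_e((RS)^{\pm 1})$ sends the contracted line $\phi(L_x)$ of $\psi^{-1}$ to a different line through the base point $p$ of $\psi$; hence no cancellation takes place and the degree is exactly $k^2$.

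For the inductive step, write $g_m=S^{b_m}(RS)^{a_m}g_{m-1}$ and set $f=\theta_k(g_{m-1})$. By induction, $f$ has degree $k^{2(m-1)}$, the prescribed $2(m-1)$ proper base points on $L_z$, restricts on $L_z$ to the M\"obius transformation induced by $g_{m-1}$, and its contracted curves meet $L_z$ only at those base points. The proper base points of $\theta_k((RS)^{a_m})\circ f$ are then those of $f$ together with the $f$-preimages of the two base points $\{p,((RS)^{a_m})^{-1}(p)\}$ of $\theta_k((RS)^{a_m})$; since $f$ acts as $g_{m-1}$ on $L_z$, these new preimages are $g_{m-1}^{-1}(p)$ and $((RS)^{a_m}g_{m-1})^{-1}(p)$, exactly the next two items of the list. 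Triviality of the isotropy of $p$ makes them distinct from the previous ones, producing $2m$ proper base points in total, all in the $\PSL(2,\z)$-orbit of $p$ on $L_z$. Composition with the linear $\theta_k(S^{b_m})$ leaves degree and base locus unchanged, and multiplicativity of degrees gives $\deg\theta_k(g_m)=k^2\deg f=k^{2m}$.

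The main obstacle is justifying rigorously the multiplicativity of the degree at each inductive step. The cleanest formalisation passes through a common resolution $X\to\p^2$ obtained by blowing up all the base points on $L_z$ together with the infinitely near points produced by the $(k-1)$-fold base locus of $\psi$, on which every $\theta_k((RS)^{\pm 1})$ lifts to a pseudo-automorphism fixing the strict transform of $L_z$; the degree of $\theta_k(g_m)$ is then read off from the action on $\mathrm{Pic}(X)$, and the induction hypothesis that the contracted curves of $f$ lie above exceptional divisors disjoint from those relevant to $\theta_k((RS)^{a_m})$ makes the non-cancellation transparent.
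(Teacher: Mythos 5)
This is essentially the paper's proof: you analyse $\psi$ to get that $\theta_k((RS)^{\pm 1})$ has degree $k^2$ with exactly the two proper base-points $p$ and $(RS)^{\mp 1}(p)$ on $L_z$, and then induct block by block, using the trivial $\PSL(2,\z)$-isotropy of $p$ to keep the base loci of the two factors disjoint so that degrees and base-points simply accumulate (the paper strips the first-applied block $S^{b_1}(RS)^{a_1}$ rather than the last one, which changes nothing). One small warning: the closing ``common resolution'' remark should be discarded --- a single surface on which \emph{every} $\theta_k((RS)^{\pm 1})$ lifts to a pseudo-automorphism would require blowing up the entire infinite orbit of $p$ --- but it is not needed, since the standard fact that $\deg(g\circ f)=\deg g\cdot\deg f$ whenever the base loci of $g$ and $f^{-1}$ are disjoint is exactly what your induction already verifies.
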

\begin{proof}
The birational map $\psi$ has degree $k$ and has an unique proper base-point which is $p=(\mu:1:0)\in L_z$; the same is true for $\psi^{-1}$. Moreover both maps fix any other point of $L_z$. 

Since $\theta_e(RS)^{a_1}$ is an automorphism of $\p^2$ that moves the point $p$ onto an other point of $L_z$, the map $\theta_k( (RS)^{a_1})=\psi \theta_e(RS)^{a_1} \psi^{-1}$ has degree $k^2$ and exactly two proper base-points, which are $p$ and $\psi\theta_e(RS)^{-a_1}(p)=((RS)^{a_1})^{-1}(p)$. The map $\theta_k(S)$ being an automorphism of $\p^2$, $\theta_k(S^{b_1}(RS)^{a_1})$ has also degree $k^2$ and two proper base-points, which are $p$ and $((RS)^{a_1})^{-1}(p)$. This gives the result for $m=1$.

Proceeding by induction for $m>1$, we assume that $\theta_k(S^{b_m}(RS)^{a_m}\cdots S^{b_2}(RS)^{a_2})$ has degree $k^{2m-2}$ and exactly $2m-2$ proper base-points, all lying on $L_z$, which are 
$$
p, ((RS)^{a_2})^{-1}(p),(S^{b_2}(RS)^{a_2})^{-1}(p),\dots,(S^{b_m}(RS)^{a_m}\cdots S^{b_2}(RS)^{a_2})^{-1}(p).$$
The map $\theta_k(S^{b_1}(RS)^{a_1})^{-1}=\theta_k((RS)^{-a_1})\theta_k(S^{-b_1})$ has degree $k^2$ and two proper base-points, which are $S^{b_1}(p)$ and $S^{b_1}(RS)^{a_1}(p)$. These two points being distinct from the $2m-2$ points above, the map   $\theta_k(S^{b_m}(RS)^{a_m}\cdots S^{b_1}(RS)^{a_1})$ has degree $k^2\cdot k^{2m-2}=k^{2m}$, and its proper base-points are the $2$ proper base-points of $\theta_k(S^{b_1}(RS)^{a_1})$ and the image by $(S^{b_1}(RS)^{a_1})^{-1}$ of the base-points of  $\theta_k(S^{b_m}(RS)^{a_m}\cdots S^{b_2}(RS)^{a_2})$. This gives the result.\qed
\end{proof}
As a corollary, we get infinitely many hyperbolic embeddings of $\SL(2,\z)$ into the Cremona group.
\begin{coro}\label{CoroHyp}
Let $m$ be a positive integer, and let $a_1,\dots,a_m,b_1,\dots,b_m\in \{\pm 1\}$.
The birational map
$$\theta_k(S^{b_m}(RS)^{a_m}\cdots S^{b_1}(RS)^{a_1})$$ has dynamical degree $k^{2m}$.

In particular, the map $\theta_k$ is an hyperbolic embedding and the set of all dynamical degrees of $\theta_k(\SL(2,\z))$ is $\{1,k^2,k^4,k^6,\dots\}$. 
\end{coro}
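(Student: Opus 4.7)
The plan is to reduce the statement to Lemma \ref{Lem:HypDeg} by producing an explicit normal form for the iterates $M^n$ of $M=S^{b_m}(RS)^{a_m}\cdots S^{b_1}(RS)^{a_1}$, and then to handle arbitrary infinite-order elements by cyclic reduction in $\mathrm{PSL}(2,\z)\simeq \z/2\z * \z/3\z$.

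First I would argue that $M^n$ can be rewritten as $(S^{2})^{c}\cdot S^{b'_{mn}}(RS)^{a'_{mn}}\cdots S^{b'_1}(RS)^{a'_1}$ for some integer $c$ and some $a'_i,b'_i\in\{\pm 1\}$. The key point is that when two consecutive copies of $M$ are concatenated, the junction $(RS)^{a_1}\cdot S^{b_m}$ places letters from the two different free factors $\z/3\z=\langle\overline{RS}\rangle$ and $\z/2\z=\langle\overline{S}\rangle$ of $\mathrm{PSL}(2,\z)$ side by side, so no collapse occurs in the normal form; all central contributions $S^{2}$ can then be pulled out to the left. Applying Lemma \ref{Lem:HypDeg} to $S^{b'_{mn}}(RS)^{a'_{mn}}\cdots S^{b'_1}(RS)^{a'_1}$ gives degree $k^{2mn}$, and multiplying by $\theta_k(S^{2c})$, which is an automorphism of $\p^2$, preserves the degree. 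Thus $\deg(\theta_k(M)^n)=\deg(\theta_k(M^n))=k^{2mn}=(k^{2m})^n$ and consequently $\lambda(\theta_k(M))=k^{2m}$.

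Next I would deduce the two "in particular" claims. Since $k$ is a positive even integer, $k^{2m}>1$ for every $m\geq 1$, so every element of the form described is hyperbolic. To cover every element of infinite order of $\SL(2,\z)$, I would invoke the fact that in a free product $A*B$ any element of infinite order is conjugate to a cyclically reduced word, and in $\z/2\z * \z/3\z$ the cyclically reduced words of infinite order are exactly the alternating words of even length $2m\geq 2$. Lifting such a word to $\SL(2,\z)$ and conjugating by a power of $S$ when necessary, one obtains precisely the standard form $\pm S^{b_m}(RS)^{a_m}\cdots S^{b_1}(RS)^{a_1}$. Since dynamical degree is a conjugacy invariant and is unchanged when multiplying by the central involution $S^{2}$, every infinite-order element of $\SL(2,\z)$ maps under $\theta_k$ to an element of dynamical degree $k^{2m}$ for some $m\geq 1$. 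Finite-order elements map to finite-order (hence elliptic) maps of dynamical degree $1$, so $\theta_k$ is hyperbolic and the set of dynamical degrees is exactly $\{1,k^2,k^4,\ldots\}$.

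The main obstacle is the first step: one has to verify carefully that successive copies of $M$ concatenate without any simplification in the normal form, so that Lemma \ref{Lem:HypDeg} really applies to $M^n$ with exponent $2mn$ rather than something smaller. This is where the free product structure of $\mathrm{PSL}(2,\z)$ is essential, as the alternating pattern between the factors $\langle\overline{S}\rangle$ and $\langle\overline{RS}\rangle$ prevents any cancellation at the junctions. The rest, including the identification of the standard form up to conjugacy and central factors, is then a formal consequence.
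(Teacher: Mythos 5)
Your proposal is correct and follows essentially the same route as the paper: one observes that $M^n$ is literally again a word of the form $S^{b'_{mn}}(RS)^{a'_{mn}}\cdots S^{b'_1}(RS)^{a'_1}$, applies Lemma~\ref{Lem:HypDeg} to get $\deg\theta_k(M^n)=k^{2mn}$ and hence $\lambda(\theta_k(M))=k^{2m}$, and then uses that every infinite-order element is conjugate, up to the central involution, to such a word. The ``main obstacle'' you flag is in fact a non-issue: Lemma~\ref{Lem:HypDeg} is stated for \emph{arbitrary} exponents $a_i,b_i\in\{\pm 1\}$ with no reducedness hypothesis, so no verification of non-cancellation at the junctions is needed (though your extra care with the central factor $S^2$ and the cyclic reduction in $\mathrm{PSL}(2,\mathbb{Z})$ fills in details the paper leaves implicit).
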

\begin{proof}
Any element of infinite order of $\SL(2,\z)$ is conjugate to $g=S^{b_m}(RS)^{a_m}\cdots S^{b_1}(RS)^{a_1}$ for some $a_1,\dots,a_m,b_1,\dots,b_m\in \{\pm 1\}$. Lemma~\ref{Lem:HypDeg} implies that the degree of $\theta_k(g^r)$ is equal to $k^{2mr}$. The dynamical degree of $\theta_k(g)$ is therefore equal to $k^{2m}$. \qed
\end{proof}

\section{Description of hyperbolic embeddings for which the central element fixes (pointwise) an elliptic curve}\label{Sec:invfixbourbell}
\subsection{Outline of the construction and notation}
In this section, we give a general way of constructing embeddings of $\mathrm{SL}(2,\mathbb{Z})$ into the Cremona group where the central involution fixes pointwise an elliptic curve. Recall that all conjugacy classes of elements of order $4$ or $6$ in $\mathrm{Bir}(\mathbb{P}^2)$ have been classified (\emph{see} \cite{BlaC}). Many of them can act on del Pezzo surfaces of degree~$1$, $2$, $3$ or $4$. 

In order to create our embedding, we will define del Pezzo surfaces $X$, $Y$ of degree~$\le 4$, and automorphisms $\alpha\in \Aut(X)$, $\beta\in \Aut(Y)$ of order respectively $6$ and $4$, so that $\alpha^3$ and~$\beta^2$ fix pointwise an elliptic curve, and that $\Pic{X}^{\alpha}$, $\Pic{Y}^{\beta}$ have both rank $1$. Note that we say that a curve is \emph{fixed} by a birational map if it is pointwise fixed, and say that it is \emph{invariant} or \emph{preserved} if the map induces a birational action (trivial or not) on the curve. Contracting $(-1)$-curves invariant by these involutions (but not by $\alpha$, $\beta$, which act minimally on $X$ and $Y$), we obtain birational morphisms $X\to X_4$ and $Y\to Y_4$, where $X_4$, $Y_4$ are del Pezzo surfaces on which $\alpha^3$ and~$\beta^2$ act minimally. Lemma~\ref{Lemm:DescentSigmaDp4} below shows that~$X_4$ and $Y_4$ are del Pezzo surfaces of degree~$4$ and  both $\Pic{X_4}^{\alpha^3}$ and $\Pic{Y_4}^{\beta^2}$ have rank $2$ and are generated by the fibres of two conic bundles on $X_4$ and $Y_4$. Choosing a birational map $X_4\dasharrow Y_4$ conjugating $\alpha^3$ to $\beta^2$ (which exists if and only if the elliptic curves are isomorphic), which is general enough, we should obtain an embedding of $\mathrm{SL}(2,\mathbb{Z})$ such that any element of infinite order is hyperbolic.

In order to prove that there is no more relation in the group generated by $\alpha$ and $\beta$ and that all elements of infinite order are hyperbolic, we describe the morphisms $X\to X_4$ and~$Y\to Y_4$ and the action of $\alpha$ and $\beta$ on $\Pic{X}^{\alpha^3}$ and $\Pic{Y}^{\beta^2}$ (which are generated by the fibres of the two conic bundles on $X_4$, and $Y_4$ and by the exceptional curves obtained by blowing-up points on the elliptic curves fixed), and then observe that the composition of the elements does what is expected. 

\subsection{Technical results on automorphisms of del Pezzo surfaces of degree $4$}
Recall some classical facts about del Pezzo surfaces, that the reader can find in \cite{Dem} (see also \cite{Ma}). A del Pezzo surface is a smooth projective surface $Z$ such that the anti-canonical divisor $-K_Z$ is ample. These are $\p^1\times \p^1$, $\p^2$ or $\p^2$ blown-up at $1\le r\le 8$ points in general position (no $3$ collinear, no $6$ on the same conic, no $8$ on the same cubic singular at one of the $8$ points). The degree of a del Pezzo surface $Z$ is $(K_Z)^2$, which is $8$ for $\p^1\times\p^1$, $9$ for $\p^2$ and $9-r$ for the blow-up of $\p^2$ at $r$ points.

Any del Pezzo surface $Z$ contains a finite number of $(-1)$-curves (smooth curves isomorphic to $\p^1$ and of self-intersection $-1$), each of these can be contracted to obtain another del Pezzo surface of degree $(K_Z)^2+1$. These are moreover the only irreducible curves of $Z$ of negative self-intersection. If $Z$ is not $\p^2$, there is a finite number of conic bundles $Z\to \p^1$ (up to automorphism of $\p^1$), and each of them has exactly $8-(K_Z)^2$ singular fibres. This latter fact can be find by contracting one component in each singular fibre, which is the union of two $(-1)$-curves, obtaining a line bundle on a del Pezzo surface, isomorphic to $\p^1\times\p^1$ or $\mathbb{F}_1$ and having degree $8$.

\begin{lemm}\label{Lemm:DescentSigmaDp4}
Let $Z$ be a del Pezzo surface, and let $\sigma\in \Aut(Z)$ be an involution that fixes $($pointwise$)$ an elliptic curve. Denote by $\eta\colon Z\to Z_4$ any $<\sigma>$-invariant birational morphism such that the action on $Z_4$ is minimal.

Then, $Z_4$ is a del Pezzo surface of degree~$4$, and $\Pic{Z_4}^{\sigma}=\mathbb{Z}f_1\oplus \mathbb{Z}f_2$, where $f_1,f_2$ correspond to the fibres of the two conic bundles $\pi_1$, $\pi_2\colon Z_4\to \p^1$ $($defined up to automorphism of $\p^1)$ that are invariant by $\sigma$. Moreover 
\begin{align*}
& f_1+f_2=-K_{Z_4},&& f_1\cdot f_2=2 && \text{and} &&\Pic{Z}^{\sigma}=\mathbb{Z}\eta^{*}(f_1)\oplus \mathbb{Z}\eta^{*}(f_2)\oplus \mathbb{Z}E_1\oplus\dots\oplus \mathbb{Z}E_r
\end{align*}
where $E_1,\dots,E_r$ are the $r$ irreducible curves contracted by $\eta$ $($in particular, $\eta$ only contracts invariant $(-1)$-curves$)$.
\end{lemm}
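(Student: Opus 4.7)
My plan is to identify $Z_4$ via the equivariant Mori program for the $\langle\sigma\rangle$-action, then read off the numerical data by intersection theory and transfer it to $Z$ via the decomposition of $\Pic{Z}$ induced by $\eta$.

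First I would verify that $\Gamma$ is not contracted by $\eta$ (every exceptional divisor of a birational morphism between smooth projective surfaces is a tree of smooth rational curves, whereas $\Gamma$ has arithmetic genus $1$), and moreover that $\eta$ restricts to an isomorphism $\Gamma \to \Gamma_4:=\eta(\Gamma)$; thus $\Gamma_4\subset Z_4$ is an elliptic curve pointwise fixed by $\sigma$. Since $Z_4$ is smooth and rational and the $\sigma$-action is minimal, the equivariant Mori program for surfaces forces $Z_4$ to be a $\sigma$-Mori fibre space, so there are two cases: either (a) $\Pic{Z_4}^{\sigma}$ has rank $1$ (the $\sigma$-del Pezzo case), or (b) $\Pic{Z_4}^{\sigma}$ has rank $2$ with a $\sigma$-invariant conic bundle structure $\pi_1\colon Z_4\to\p^1$. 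Case (a) is incompatible with $\sigma$ fixing an elliptic curve: by the preceding lemma $\sigma$ is a de Jonqui\`eres involution of degree $3$, and by the Bayle--Beauville classification its minimal realization has $\rho^\sigma=2$; alternatively one rules case (a) out by applying the Lefschetz fixed-point formula to the minimal $\sigma$-del Pezzo candidates ($\p^2$, $\p^1\times\p^1$, $\mathbb{F}_n$, etc.) and observing that none admits an involution with an elliptic fixed curve.

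Hence we are in case (b). The de Jonqui\`eres structure provides, besides $\pi_1$, a second $\sigma$-invariant conic bundle $\pi_2\colon Z_4\to\p^1$ (on the anti-canonical model these are the two distinguished $\sigma$-invariant pencils of conics), and writing $f_1,f_2$ for the respective fibre classes we have $f_1,f_2\in\Pic{Z_4}^\sigma$, generating this rank-$2$ lattice over $\mathbb{Q}$. Adjunction on a smooth rational fibre gives $f_i^2=0$ and $(-K_{Z_4})\cdot f_i=2$. Writing $-K_{Z_4}=af_1+bf_2$ with $a,b\in\mathbb{Q}$ and pairing with $f_1,f_2$ forces $a(f_1\cdot f_2)=b(f_1\cdot f_2)=2$, hence $a=b$; then $(-K_{Z_4})^2=2a^2(f_1\cdot f_2)=4a$, so $Z_4$ has degree $4a$. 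Conic-bundle fibres being primitive in $\Pic{Z_4}$, $a$ is a positive integer; all degree $\geq 8$ cases are excluded because no involution of $\p^1\times\p^1$ or $\mathbb{F}_n$ pointwise fixes an elliptic curve. Thus $a=1$, giving $-K_{Z_4}=f_1+f_2$, $f_1\cdot f_2=2$, and $Z_4$ a del Pezzo surface of degree $4$. Finally, from the standard splitting $\Pic{Z}=\eta^*\Pic{Z_4}\oplus\bigoplus_{i=1}^{r}\mathbb{Z}E_i$ one takes $\sigma$-invariants: if $\sigma$ swapped two exceptional curves $E_i,E_j$ they would contribute only the single invariant class $E_i+E_j$, so the invariant rank would drop below $2+r$; matching ranks against the asserted formula forces every $E_i$ to be $\sigma$-invariant, which also establishes the parenthetical claim.

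\textbf{Main obstacle.} The delicate step is the reduction to case (b) together with the identification of the second $\sigma$-invariant conic bundle: both rest on the Bertini--Geiser--de Jonqui\`eres classification of involutions of rational surfaces fixing a positive-genus curve, which is classical but non-trivial. Once case (b) and the two conic bundles are secured, the numerical assertions and the description of $\Pic{Z}^\sigma$ follow from routine intersection-theoretic computations.
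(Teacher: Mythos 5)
Your overall strategy (classify the minimal $\sigma$-action via Bayle--Beauville, land in the conic-bundle case, then do intersection theory) is close in spirit to the paper's, but two steps do not go through as written. First, the determination of the degree. Your argument reduces to showing $a=1$ in $-K_{Z_4}=af_1+bf_2$, and you assert that $a$ is a positive integer ``since conic-bundle fibres are primitive.'' This is a non sequitur: primitivity of $f_1$ and $f_2$ does not force the coefficients of an integral class in the $\mathbb{Q}$-basis $(f_1,f_2)$ to be integers (e.g.\ $e_1$ and $e_1+2e_2$ are both primitive in $\mathbb{Z}^2$, yet $e_2$ has half-integral coordinates in that basis). Since $a=2/(f_1\cdot f_2)$, the values $a=\tfrac12$ and $a=\tfrac14$ (degrees $2$ and $1$) survive your exclusion of degree $\ge 8$, and these correspond to genuine minimal conic-bundle involutions on del Pezzo surfaces of degree $2$ and $1$ --- they are ruled out only because their fixed curves have genus $\ge 2$, and you never feed the genus-one hypothesis into this step. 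The paper's route is exactly that: $\pi_1$ restricted to the fixed elliptic curve is a degree-$2$ cover of $\p^1$, Riemann--Hurwitz gives $4$ branch points, each branch point is a singular fibre (on a smooth fibre $\sigma$ has two fixed points on the bisection; on a singular fibre it swaps the components and fixes only the node), so there are exactly $8-(K_{Z_4})^2=4$ singular fibres and the degree is $4$. (Relatedly, the existence and properties of the \emph{second} invariant conic bundle with $f_1+f_2=-K_{Z_4}$, $f_1\cdot f_2=2$ are specific to degree $4$; the paper cites \cite[Lemma~9.11]{Bla} for this rather than deriving it from ``the two distinguished pencils on the anti-canonical model.'')

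Second, your proof that $\eta$ contracts only $\sigma$-invariant curves is circular: you argue that if some $E_i,E_j$ were swapped the invariant rank would ``drop below $2+r$,'' and then ``match ranks against the asserted formula'' --- but that formula is the conclusion being proved, and you give no independent computation of $\operatorname{rk}\Pic{Z}^{\sigma}$. The paper's argument is geometric and short: if $\eta$ blew up a $\sigma$-orbit $\{q,\sigma(q)\}$ with $q\neq\sigma(q)$, then since $\pi_1\sigma=\pi_1$ both points lie on the same fibre of $\pi_1$, so the strict transform of that fibre in $Z$ contains a rational curve of self-intersection $\le -2$, which cannot exist on a del Pezzo surface. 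You should replace the rank-matching by this (or an equivalent) argument. The rest of your write-up --- $\Gamma$ not contracted, the exclusion of the $\rho^\sigma=1$ case, and the splitting $\Pic{Z}=\eta^*\Pic{Z_4}\oplus\bigoplus\mathbb{Z}E_i$ --- is fine (the appeal to ``the preceding lemma'' is misplaced, since that lemma concerns $\theta(S^2)$ for an embedding of $\mathrm{SL}(2,\z)$, but your alternative via the classification of minimal involutions works).
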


\begin{proof}
Since $Z$ is a del Pezzo surface, $Z_4$ is also a del Pezzo surface. As $\sigma$ acts minimally on $Z_4$ and fixes an elliptic curve, we have the following situation (\cite[Theorem 1.4]{BaBe}): there exists a conic bundle $\pi_1\colon Z_4\to \p^1$ such that $\pi_1\sigma =\pi_1$, $\sigma$ induces a non-trivial involution on each smooth fibre of $\pi_1$, and exchanges the two components of each singular fibre, which meet at one point. The restriction of $\pi_1$ to the elliptic curve is a double covering ramified over $4$ points, which implies that there are four singular fibres. The surface $Z_4$ is thus the blow-up of four points on $\mathbb{F}_1$ or $\p^1\times\p^1$, and has therefore degree~$4$. The fact that there are exactly two conic bundles $\pi_1$, $\pi_2\colon Z_4\to \p^2$ invariant by~$\sigma$, that $\Pic{Z_4}^{\sigma}$ is generated by the two fibres, that $f_1+f_2=-K_{Z_4}$ and that $f_1\cdot f_2=2$ can be checked in~\cite[Lemma~9.11]{Bla}.

It remains to observe that all points blown-up by $\eta$ are fixed by $\sigma$. If $\eta$ blows-up an orbit of at least two points of $Z_4$ invariant by $\sigma$, the points would be on the same fibre of $\pi_1$. The transform of this fibre on $Z$ would then contain a curve isomorphic to $\p^1$ and having self-intersection $\le -2$; this is impossible on a del Pezzo surface.
\qed\end{proof}

\begin{lemm}\label{Lem:dp4fibres}
For $i=1,2$, let $X_i$  be a projective smooth surface, with $K_{X_i}^2=4$, and let $\sigma_i \in\Aut(X_i)$ be an involution which fixes an elliptic curve $\Gamma_i\subset X_i$. Let $\pi_i\colon X_i\to \p^1$ be a conic bundle such that $\pi_i\sigma_i=\pi_i$ and let $F_i,G_i\subset X_i$ be two sections of $\pi_i$ of self-intersection~$-1$, intersecting transversally into one point.

Then, $X_1$, $X_2$ are del Pezzo surfaces of degree~$4$ and the following assertions are equi\-valent:
\begin{enumerate}
\item
There exists an isomorphism $\varphi\colon X_1\to X_2$ which conjugates $\sigma_1$ to $\sigma_2$, which sends $F_1,G_1$ onto $F_2$ and~$G_2$ respectively and such that $\pi_2\varphi=\pi_1$;
\item
The points of $\p^1$ whose fibres by $\pi_i$ are singular are the same for $i=1,2$, and $\pi_1(F_1\cap G_1)=\pi_2(F_2\cap G_2)$.
\end{enumerate}
\end{lemm}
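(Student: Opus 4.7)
First I would show that $X_i$ is del Pezzo of degree~$4$. Since $\sigma_i$ fixes $\Gamma_i$ pointwise and acts non-trivially on a general (rational) fibre of $\pi_i$, the restriction $\pi_i|_{\Gamma_i}\colon\Gamma_i\to\p^1$ is a double cover; Riemann--Hurwitz for a genus-$1$ curve then forces exactly four branch points, so $\pi_i$ has exactly four singular fibres, each of the form $C_1+C_2$ with $C_1,C_2$ two $(-1)$-curves meeting transversally at the unique $\sigma_i$-fixed point of the fibre (which necessarily lies on $\Gamma_i$, as $\sigma_i$ must swap the two components). Contracting one component per singular fibre yields a Hirzebruch surface, so $K_{X_i}^2=4$; ampleness of $-K_{X_i}$ is checked by ruling out $(-2)$-curves, since any irreducible curve of negative self-intersection must be contained in a fibre of $\pi_i$, and no such fibre has a $(-2)$-component. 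The implication $(1)\Rightarrow (2)$ follows at once: if $\varphi$ is as in (1), then $\pi_2\varphi=\pi_1$ identifies the bases, sends singular fibres of $\pi_1$ to singular fibres of $\pi_2$, and sends $F_1\cap G_1$ to $F_2\cap G_2$.

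For $(2)\Rightarrow(1)$ I would start with a structural observation: by Lemma~\ref{Lemm:DescentSigmaDp4}, $X_i$ carries a second $\sigma_i$-invariant conic bundle $\pi_i'\colon X_i\to\p^1$, with fibre class $f_i'$ satisfying $f_i+f_i'=-K_{X_i}$. Since $F_i$ is a $(-1)$-curve with $F_i\cdot f_i=1$ we get $F_i\cdot f_i'=F_i\cdot(-K_{X_i})-F_i\cdot f_i=0$, and likewise for $G_i$, so $F_i$ and $G_i$ are each contained in a fibre of $\pi_i'$. Combined with $F_i\cdot G_i=1$ and $F_i^2=G_i^2=-1$, this forces $F_i+G_i$ to be a singular fibre of $\pi_i'$ with node $p_i$. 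Moreover $\sigma_i$ must swap $F_i$ and $G_i$: if $\sigma_i$ preserved $F_i$ set-wise then the two fixed points of $\sigma_i|_{F_i}$ would lie on $\Gamma_i$, contradicting $\Gamma_i\cdot F_i=(-K_{X_i})\cdot F_i=1$. The pair $(\pi_i,\pi_i')$ then yields a $\sigma_i$-equivariant degree-$2$ morphism $X_i\to \p^1\times\p^1$ with $\sigma_i$ as deck involution, branched along a $(2,2)$-curve $\Gamma_i'$, the image of $\Gamma_i$.

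The heart of the argument is then to show that the data in (2) determine $X_i$ up to an isomorphism of the type demanded in (1). Contracting, inside each singular fibre of $\pi_i$, the unique component not meeting $F_i$ gives a birational morphism $\eta_i\colon X_i\to\mathbb{F}_1$ compatible with the projection to $\p^1$ and sending $F_i$ to the $(-1)$-section; the two copies of $\mathbb{F}_1$ are canonically identified via (2), the four contracted points lie on the four fibres over the common critical values of $\pi_i$, and the position of $\eta_i(p_i)\in F_i$ is prescribed by $\pi_i(p_i)$. Inverting the blow-ups would then produce the desired $\varphi\colon X_1\to X_2$; by construction it commutes with $\pi_i$ and sends $F_1,G_1$ to $F_2,G_2$, and because each $\sigma_i$ is the deck transformation of the double cover $(\pi_i,\pi_i')$ determined by the branch data, $\varphi$ automatically conjugates $\sigma_1$ to $\sigma_2$. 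The main obstacle is the rigidity statement that the four contracted points on $\mathbb{F}_1$ coincide on both sides; equivalently, that a smooth $(2,2)$-curve in $\p^1\times\p^1$ with four prescribed vertical tangencies in the first factor and with a designated singular fibre of its second projection having prescribed first-coordinate at its node is unique. I expect this to reduce to a direct parameter count in the $8$-dimensional linear system of $(2,2)$-curves, using crucially that each contracted point $q_k\in\mathbb{F}_1$ is forced to lie on the image of $\Gamma_i$ (a plane cubic through the centre of the blow-up $\mathbb{F}_1\to\p^2$), and that the node of $F_i+G_i$ pins down both $p_i$ and, via the $\sigma_i$-symmetry of $\Gamma_i$, the remaining three branch data.
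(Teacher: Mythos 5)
Your reduction for $(2)\Rightarrow(1)$ — contract, in each singular fibre of $\pi_i$, the component missing $F_i$ to get $\eta_i\colon X_i\to\mathbb{F}_1$ with $\eta_i(F_i)=E$, and then try to match the four contracted points — is exactly the paper's reduction. But you stop precisely at the step that carries all the content. You defer the matching of the four points to an ``expected'' parameter count for $(2,2)$-curves in $\p^1\times\p^1$ with prescribed tangency data, and that claim is both unproven and, as stated, not correct: prescribing the four branch points of the first projection and the first coordinate of one node of the second projection imposes only $5$ conditions on the $8$-dimensional system of $(2,2)$-curves, so the curve is at best unique up to the $3$-dimensional group $\mathrm{PGL}(2,\C)$ acting on the second factor, and a dimension count alone would not establish even that. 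The paper closes this gap with a short direct argument that your write-up misses: since $\eta_i(G_i)$ meets $E$ in exactly one point (the contracted components are disjoint from $F_i$), it is a section of self-intersection $3$, hence \emph{all four} contracted points lie on $\eta_i(G_i)$ and are exactly its intersections with the fibres over the four common critical values; and a section of self-intersection $3$ (a smooth conic through the centre of $\mathbb{F}_1\to\p^2$) is determined, up to the fibre-preserving automorphisms of $\mathbb{F}_1$, by its intersection with $E$ — which is precisely the datum $\pi_i(F_i\cap G_i)$ of assertion $(2)$. Your detour through the second conic bundle $\pi_i'$, the identification of $F_i+G_i$ as one of its singular fibres, and the double cover of $\p^1\times\p^1$ is correct but unnecessary, and it leads you away from the object ($\eta_i(G_i)$) that actually pins down the four points.

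A second, smaller flaw: your argument for ampleness of $-K_{X_i}$ asserts that any irreducible curve of negative self-intersection is contained in a fibre of $\pi_i$; this is false — $F_i$ and $G_i$ are sections of self-intersection $-1$, and a conic bundle with four singular fibres can perfectly well carry a $(-2)$-section (blow up four points of $\mathbb{F}_2$ off the negative section). The paper instead deduces del Pezzo-ness from the same conic $\eta_i(G_i)$: pushing down to $\p^2$, the five blown-up points lie on a smooth conic, so no three are collinear.
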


\begin{proof}
For $i=1,2$, we denote by $\eta_i\colon X_i\to \mathbb{F}_1$ the birational morphism that contracts, in each singular fibre of $\pi_i$, the $(-1)$-curve that does not intersect $F_i$. The curve $\eta_i(F_i)$ is equal to the exceptional section $E$ of the line bundle $\pi\colon \mathbb{F}_1\to \p^1$, with $\pi=\pi_i\eta_i^{-1}$. Since $\eta_i(G_i)$ intersects $E$ into exactly one point, it is a section of self-intersection $3$. In particular, the four points blown-up by $\eta_i$ lie on $\eta_i(G_i)$. Contracting $E$ onto a point of $\p^2$, $\eta_i(G_i)$ becomes a conic of $\p^2$ passing through the five points blown-up by the birational morphism $X_i\to \p^2$; this implies that no $3$ are collinear and thus that $X_i$ is a del Pezzo surface of degree~$4$.

It is clear that the first assertion implies the second one. It remains to prove the converse. The second assertion implies that $\eta_1(G_1)\cap E=\eta_2(G_2)\cap E$, and this yields the existence of an automorphism of $\mathbb{F}_1$ that sends $\eta_1(G_1)$ onto $\eta_2(G_2)$ and that preserves any fibre of $\pi$. We can thus assume that $\eta_1(G_1)=\eta_2(G_2)$, which implies that the four points blown-up by $\eta_1$ and $\eta_2$ are the same. The isomorphism $\varphi$ can be chosen as~$\varphi=\eta_2^{-1}\circ \eta_1$. The map $\varphi$ conjugates $\sigma_1$ to $\sigma_2$ because, for each $i$, $\sigma_i$ is the unique involution that preserves any fibre of $\pi_i$ and exchanges the two components of each singular fibre (\emph{see} for example \cite[Lemma~9.11]{Bla}).
\qed\end{proof}

\subsection{Actions on the Picard groups of $\alpha$ and $\beta$}

We now describe the actions of $\alpha$ and $\beta$ on $\Pic{X}$ and $\Pic{Y}$. 

\begin{prop}\label{Matrices6}
Let $X$ be a del Pezzo surface of degree~$(K_X)^2<4$, and let $\alpha\in \Aut(X)$ be an automorphism of order~$6$ such that $\Pic{X}^{\alpha}=\mathbb{Z}K_X$ and such that $\alpha^3$ fixes pointwise an elliptic curve.
Let $\eta_X\colon X\to X_4$ be a birational morphism, so that $\alpha^3$ acts minimally on~$X_4$, and let $f_1$, $f_2\in \Pic{X}$ be the divisors corresponding to the two conic bundles on~$X_4$ which are invariant by $\alpha^3$ $($\emph{see} Lemma~\ref{Lemm:DescentSigmaDp4}$)$. Then, one of the following occurs:

(i)  $(K_X)^2=3$, $\eta_X$ contracts a curve $E_1$, and $\alpha$, $\alpha^2$ act on $\Pic{X}^{\alpha^3}$ as
\begin{align*}
&\left[
\begin{array}{rrrrr}
1& 1 & 1 \\
1 & 0 & 0\\
-2 & 0 & -1
\end{array}
\right] && \text{ and } &&
\left[
\begin{array}{rrrrr}
0& 1 & 0 \\
1 & 1 & 1\\
0 & -2 & -1
\end{array}
\right]
\end{align*}
relatively to the basis $(f_1,f_2,E_1)$ $($up to an exchange of $f_1,f_2)$.

(ii)  $(K_X)^2=1$, $\eta_X$ contracts $E_1,E_2,E_3$, and $\alpha$, $\alpha^2$ act on $\Pic{X}^{\alpha^3}$ as
\begin{align*}
&\left[
\begin{array}{rrrrr}
 1&3&1&1&1\\
 3&4&2&2&2\\
-2&-4&-2&-2&-1\\
-2&-4&-1&-2&-2\\
-2&-4&-2&-1&-2\end{array}
\right] && \text{ and } &&
\left[
\begin{array}{rrrrr}
4&3&2&2&2\\
3&1&1&1&1\\
-4&-2&-2&-1&-2\\
-4&-2&-2&-2&-1\\
-4&-2&-1&-2&-2\end{array}
\right]
\end{align*}
relatively to the basis $(f_1,f_2,E_1,E_2,E_3)$ $($up to a good choice of $E_1,E_2,E_3$ and an exchange of $f_1,f_2)$.
\end{prop}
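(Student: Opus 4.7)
The plan is to apply Lemma~\ref{Lemm:DescentSigmaDp4} with $\sigma=\alpha^3$, which yields the birational morphism $\eta_X\colon X\to X_4$, the basis $f_1,f_2,E_1,\dots,E_r$ of $\Pic{X}^{\alpha^3}$, the intersection numbers $f_i^2=0$, $f_1\cdot f_2=2$, $E_i^2=-1$, $E_i\cdot E_j=0$ for $i\neq j$, $E_i\cdot f_j=0$, and the formula $K_X=-f_1-f_2+\sum_{i=1}^r E_i$ with $(K_X)^2=4-r$. Since $\alpha$ commutes with $\alpha^3$, it acts on the lattice $\Pic{X}^{\alpha^3}$; as $\alpha^3$ acts trivially there, the action factors through a $\mathbb{Z}/3\mathbb{Z}$-representation.

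The value of $r$ will be determined by a parity argument. Over $\mathbb{Q}$, the representation $\Pic{X}^{\alpha^3}\otimes\mathbb{Q}$ decomposes into copies of the trivial representation and copies of the unique non-trivial irreducible $\mathbb{Q}$-representation of $\mathbb{Z}/3\mathbb{Z}$, which is $2$-dimensional (with complex eigenvalues $\zeta,\zeta^2$). The trivial part coincides with $\Pic{X}^\alpha\otimes\mathbb{Q}=\mathbb{Q} K_X$, of dimension~$1$ by hypothesis, so the complementary dimension $r+1$ must be even, hence $r$ is odd. Combined with $1\le(K_X)^2<4$ this leaves exactly the two cases $(r,(K_X)^2)\in\{(1,3),(3,1)\}$.

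It then remains to pin down the integer matrix $M$ of $\alpha$ in the basis by exploiting: $(a)$ $M^3=\mathrm{Id}$; $(b)$ $M^T G M=G$ for the Gram matrix $G$ above; $(c)$ $M$ fixes the coordinate vector $(-1,-1,1,\dots,1)^T$ of $K_X$; $(d)$ each $M(E_i)$ is a $(-1)$-class and each $M(f_j)$ is the class of a fibre of a conic bundle (self-intersection $0$, anti-canonical degree $2$). In case~(i), the orthogonal complement $K_X^\perp\subset\Pic{X}^{\alpha^3}\otimes\mathbb{R}$ is a negative definite plane, any order-$3$ isometry of it is a rotation by $\pm 2\pi/3$, and after imposing integrality together with $(a)$--$(d)$ exactly two admissible matrices remain, exchanged by $f_1\leftrightarrow f_2$; they are $M$ and $M^{-1}=M^2$, giving the two displayed matrices.

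Case~(ii) is the main obstacle: the lattice has rank~$5$ and the negative definite complement $K_X^\perp$ is $4$-dimensional, so a priori more choices are available. A useful additional observation is that no single $E_i$ can be $\alpha$-fixed in $\Pic{X}^{\alpha^3}$, because otherwise $E_i\in\Pic{X}^\alpha=\mathbb{Z}K_X$ would force $E_i=nK_X$ for some $n\in\mathbb{Z}$, and then $E_i^2=n^2(K_X)^2\ge 0$ would contradict $E_i^2=-1$. Hence the three classes $E_1,E_2,E_3$ are mixed non-trivially by $\alpha$. A direct enumeration organised by first choosing $M(f_1)$ and then the images $M(E_i)$, all constrained by $(a)$--$(d)$ (in particular using that the $(-1)$-classes in $\Pic{X}^{\alpha^3}$ form a discrete set determined by the Gram matrix), pins down the displayed matrix up to relabelling $E_1,E_2,E_3$ and swapping $f_1\leftrightarrow f_2$; squaring yields the second displayed matrix.
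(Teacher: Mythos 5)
Your derivation of the dichotomy $(K_X)^2\in\{1,3\}$ is correct and is a genuinely different (and arguably cleaner) route than the paper's: the paper instead takes an invariant $(-1)$-curve $E$, writes $E+\alpha(E)+\alpha^2(E)=sK_X$ (possible since $\Pic{X}^{\alpha}=\mathbb{Z}K_X$), and solves $-3=s(K_X)^2$, $-3+6\,E\cdot\alpha(E)=s^2(K_X)^2$. Note that the paper's version buys something your parity argument does not: it outputs the numbers $E\cdot\alpha(E)=1$ (case (i)) and $E\cdot\alpha(E)=2$ (case (ii)), which are then the engine of the whole matrix computation. Your treatment of case (i) is essentially sound: on the rank-$2$ negative definite lattice $K_X^\perp$ (a rescaled $A_2$) a fixed-point-free order-$3$ isometry is one of two rotations, interchanged by the reflection $f_1\leftrightarrow f_2$, and this matches the paper's direct computation $\alpha(E_1)=f_1-E_1$, $\alpha(f_2)=f_1$, etc., obtained from $f_1=E_1+\alpha(E_1)$, $f_2=E_1+\alpha^2(E_1)$.

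The genuine gap is case (ii), which you correctly identify as the main obstacle and then dispose of in one sentence. The statement that "a direct enumeration \dots pins down the displayed matrix up to relabelling and swapping" \emph{is} the content of case (ii); asserting it is not proving it. Concretely, you would have to list all classes $D\in\Pic{X}^{\alpha^3}$ (a rank-$5$ lattice) with $D^2=D\cdot K_X=-1$, respectively $D^2=0$, $D\cdot K_X=-2$, and then check which assignments $E_i\mapsto M(E_i)$, $f_j\mapsto M(f_j)$ assemble into an order-$3$ isometry fixing $K_X$; none of this is carried out, and it is not even argued that the outcome is unique up to the allowed basis changes (the "good choice of $E_1,E_2,E_3$" in the statement is precisely the normalisation that makes uniqueness hold, and must be produced, not assumed). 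The paper avoids the enumeration entirely by a constructive geometric argument on the degree-$1$ del Pezzo surface: starting from $E_1$ and using $E\cdot\alpha(E)=2$, it sets $E_2'=\iota_B(\alpha(E_1))=-2K_X-\alpha(E_1)$ with $\iota_B$ the Bertini involution, checks $E_1\cdot E_2'=0$ so that $E_1,E_2',E_3$ are simultaneously contractible, computes $\alpha(E_3)\cdot E_1=1$, $\alpha(E_3)\cdot E_2'=2$, $\alpha(E_3)\cdot E_3=2$ to solve $\alpha(E_3)=af_1'+bf_2'-E_1-2E_2'-2E_3$ with $a+b=3$, $ab=2$, and then normalises via $E_2=f_1'-E_2'$, $f_2=f_1'+f_2'-2E_2'$ to read off every column of the matrix. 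To repair your proof you would either have to actually perform and justify the finite enumeration, or import an argument of this constructive type.
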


\begin{proof}
Let $E\subset X$ be any $(-1)$-curve invariant by $\alpha^3$. The divisor $E+\alpha(E)+\alpha^2(E)$ is invariant by $\alpha$ and thus equi\-valent to $sK_X$ for some integer $s$. Computing the intersection with $K_X$ and the self-intersection, we obtain $-3=~s(K_X)^2$ and $-3+6(E\cdot \alpha(E))=s^2(K_X)^2$. This gives two possibilities:
\[\begin{array}{llll}
(i)&(K_X)^2=3,& s=-1,& E\cdot \alpha(E)=1\\
(ii)&(K_X)^2=1,& s=-3,& E\cdot \alpha(E)=2\\
\end{array}\]

In case $(i)$, $\eta_X$ is given by the choice of one $(-1)$-curve $E_1$ invariant by $\alpha^3$. Since $E_1\cdot \alpha(E_1)=1$, the divisor $E_1+\alpha(E_1)$ corresponds to a conic bundle on $X$ and $X_4$. Up to renumbering, we can say that $f_1=E_1+\alpha(E_1)$ and $f_2=E_1+\alpha^2(E_1)$. This means that $\alpha(E_1)=f_1-E_1$, $\alpha^2(E_1)=f_2-E_1$,  $\alpha(f_1)=f_1+f_2-2E_1$ and $\alpha(f_2)=f_1$.

In case $(ii)$, there are three curves $E_1$, $E_2$, $E_3$ contracted by $\eta_X$. We first choose $E_1$, and then choose $E_2'=\iota_B(\alpha(E_1))=-2K_X-\alpha(E_1)$ (where $\iota_B$ is the Bertini involution of the surface). 
Since $E_2'$ does not intersect $E_1$, we can contract $E_1$, $E_2'$, and another curve $E_3$ to obtain an $\alpha^3$-equivariant birational morphism $X\to X_4'$, where $X_4'$ is a del Pezzo surface of degree~$4$. This choice gives us two conic bundles $f_1',f_2'$ on $X_4'$, which we also see on $X_4$, invariant by $\alpha^3$. We now compute~$\alpha(E_3)$. We have $\alpha(E_3)\cdot E_3=2$, 
\begin{align*}
&\alpha(E_3)\cdot E_1=E_3\cdot \alpha^2(E_1)=E_3\cdot (-3K_X-E_1-\alpha(E_1))=E_3\cdot (-K_X-E_1+E_2')=1,\\
&\alpha(E_3)\cdot E_2'=E_3\cdot \alpha^2(E_2')=E_3\cdot (-2K_X-E_1)=2.
\end{align*}
This implies that $\alpha(E_3)=a f_1'+b f_2'-E_1-2E_2'-2E_3$, for some integers $a,b$. Computing the intersection with $-K_X$ we find $1=2a+2b-1-2-2=2(a+b)-5$, which means that $a+b=3$. Computing the self-intersection, we obtain that~$-1=2ab-1-4-4=4ab-9$, so $ab=2$. Up to an exchange of $f_1',f_2'$, we can assume that $a=1,b=2$, and obtain that $\alpha(E_3)=f_1'+2f_2'-E_1-2E_2'-2E_3=-2K_X-(f_1'-E_1)$. 

We now call $E_2$ the $(-1)$-curve $f_1'-E_2'$, which does not intersect $E_1$ or $E_3$. We take $f_1=f_1'$ and $f_2=f_1'+f_2'-2E_2'$, so that $f_1,f_2$ are conic bundles, with intersection $2$, and  $-K_X=f_1+f_2-E_1-E_2-E_3$. The contraction of $E_1,E_2,E_3$ is a $\alpha^3$-equivariant birational morphism $X\to X_4$ and $f_1$, $f_2$ correspond to the two conic bundles of $X_4$ invariant by $\alpha^3$. With this choice, we can compute
\begin{align*}
&\alpha(E_1)=\iota_B(E_2')=\iota_B(f_1-E_2)=-2K_X-(f_1-E_2),\\
&\alpha^2(E_1)=-3K_X-\alpha(E_1)-E_1=-K_X-(f_1-E_2)-E_1=-2K_X-(f_2-E_3),\\
&\alpha(E_3)=-2K_X-(f_1'-E_1)=-2K_X-(f_1-E_1),\\
&\alpha^2(E_3)=-3K_X-\alpha(E_3)-E_3=-K_X-(f_1-E_1)-E_3=-2K_X-(f_1-E_2).
\end{align*}
This yields the equalities $f_1=-2K_X+E_1-\alpha(E_3)$ and $f_2=-2K_X+E_3-\alpha^2(E_1)$, $E_2=\alpha^2(E_3)+2K_X-f_1$, and a straightforward computation gives, with the four equations above, $\alpha^i(f_j)$ and $\alpha^i(E_2)$ for $i,j=1,2$. 
\qed\end{proof}

\begin{prop}\label{Matrices4}
Let $Y$ be a del Pezzo surface of degree~$(K_Y)^2<4$, and let $\beta\in \Aut(Y)$ be an automorphism of order~$4$ such that $\Pic{Y}^{\beta}=\mathbb{Z}K_Y$ and that $\beta^2$ fixes pointwise an elliptic curve.
Let $\eta_Y\colon Y\to Y_4$ be a birational morphism, so that $\beta^2$ acts minimally on $Y_4$, and let $f_1$, $f_2\in \Pic{Y}$ be the divisors corresponding to the two conic bundles on $Y_4$ that are invariant by $\beta^2$ $($\emph{see} Lemma~\ref{Lemm:DescentSigmaDp4}$)$. Then, one of the following occurs:

(i) $(K_Y)^2=2$, $\eta_Y$ contracts two curves  $E_1$, $E_2$ and $\beta$ acts on $\Pic{Y}^{\beta^2}$ as
\[\left[
\begin{array}{rrrrr}
1& 2 & 1 &1\\
2 & 1 & 1 &1\\
-2 & -2 & -2& -1\\
-2 & -2 & -1&-2
\end{array}
\right]\]
relatively to the basis $(f_1,f_2,E_1,E_2)$.

(ii)  $(K_Y)^2=1$, $\eta_Y$ contracts $E_1$, $E_2$, $E_3$, and $\beta$ acts on $\Pic{Y}^{\beta^2}$ as
\[\left[
\begin{array}{rrrrr}
3& 4 & 2 & 2 & 2\\
4 & 3 & 2 & 2 & 2\\
-3 & -3 & -3  & -2 & -2\\
-3 & -3 & -2 & -3 & -2\\
-3 & -3 & -2 & -2 & -3
\end{array}
\right]\]
relatively to the basis $(f_1,f_2,E_1,E_2,E_3)$.
\end{prop}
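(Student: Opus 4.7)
The proof follows the pattern of Proposition~\ref{Matrices6}, with the order-$4$ automorphism $\beta$ playing the role of $\alpha$ and the involution $\beta^2$ that of $\alpha^3$. I start by analysing an irreducible $(-1)$-curve $E\subset Y$ contracted by $\eta_Y$; by Lemma~\ref{Lemm:DescentSigmaDp4} such a curve is $\beta^2$-invariant, so $E+\beta(E)$ is $\beta$-invariant and hence lies in $\mathbb{Z}K_Y$; write it as $sK_Y$. Intersecting with $K_Y$ gives $-2=s(K_Y)^2$, and the self-intersection identity yields $E\cdot\beta(E)=\frac{2}{(K_Y)^2}+1$. Integrality forces $(K_Y)^2\in\{1,2\}$: the excluded value $(K_Y)^2=3$ would produce no $\beta^2$-invariant $(-1)$-curve on $Y$, hence $Y=Y_4$, contradicting the fact that $Y_4$ has degree~$4$. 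One obtains $s=-1$, $E\cdot\beta(E)=2$ in case~(i) and $s=-2$, $E\cdot\beta(E)=3$ in case~(ii), and $\eta_Y$ contracts exactly $r=4-(K_Y)^2$ disjoint $\beta^2$-invariant $(-1)$-curves $E_1,\ldots,E_r$, giving via Lemma~\ref{Lemm:DescentSigmaDp4} the basis $(f_1,f_2,E_1,\ldots,E_r)$ of $\Pic{Y}^{\beta^2}$ together with the identity $-K_Y=f_1+f_2-\sum_iE_i$ (where I abbreviate $\eta_Y^*f_j$ to $f_j$).

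The image of each $E_i$ is then immediate: from $E_i+\beta(E_i)=sK_Y$ I get $\beta(E_i)=-K_Y-E_i$ in case~(i), which expands to $f_1+f_2-2E_i-E_{3-i}$, and $\beta(E_i)=-2K_Y-E_i$ in case~(ii), which expands to $2f_1+2f_2-3E_i-2E_j-2E_k$ where $\{i,j,k\}=\{1,2,3\}$. To determine $\beta(f_j)$, I write $\beta(f_j)=a\,f_1+b\,f_2+\sum_ic_iE_i$ and exploit three ingredients. First, $\beta$-invariance of the intersection form gives $\beta(f_j)\cdot E_i=f_j\cdot\beta^{-1}(E_i)=f_j\cdot\beta(E_i)$ (using $\beta^{-1}=\beta^3$ together with $\beta^2(E_i)=E_i$), which determines each $c_i$ explicitly. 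Second, the preserved invariants $\beta(f_j)\cdot K_Y=-2$ and $\beta(f_j)^2=0$ give a linear and a quadratic equation in $(a,b)$ with exactly two integer solutions. Third, the hypothesis $\Pic{Y}^{\beta}=\mathbb{Z}K_Y$ forbids $\beta$ from acting trivially on $\mathbb{Z}f_1\oplus\mathbb{Z}f_2$, so the two solutions are exchanged by the relabeling $f_1\leftrightarrow f_2$, and fixing a labeling produces the stated matrices.

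The main obstacle is the bookkeeping in case~(ii), where a $5\times 5$ matrix has to be checked coordinate by coordinate. A convenient sanity check is furnished by the identity $\beta^2=\mathrm{id}$ on $\Pic{Y}^{\beta^2}$: squaring the candidate matrix must return the identity, which catches arithmetic slips and resolves any ambiguity between $\beta$ and $\beta^{-1}$. A subtle point worth flagging is that the contracted set $\{E_1,\ldots,E_r\}$ is in general \emph{not} $\beta$-stable---since $E_i\cdot\beta(E_i)>0$ while $E_i\cdot E_j=0$ for $i\neq j$, the class $\beta(E_i)$ is always a new $(-1)$-curve lying outside the contracted locus. This is analogous to case~(ii) of Proposition~\ref{Matrices6} and is what makes the coefficients of the matrix in case~(ii) substantially larger than those of case~(i).
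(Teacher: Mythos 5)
Your first half coincides with the paper's argument: the relation $E+\beta(E)=sK_Y$ for a $\beta^2$-invariant $(-1)$-curve, the two intersection computations giving $(s,(K_Y)^2,E\cdot\beta(E))\in\{(-1,2,2),(-2,1,3)\}$, and the resulting formula $\beta(E_i)=sK_Y-E_i$. Where you diverge is in computing $\beta(f_j)$. The paper simply observes that $f_j-E_1$ is again a $\beta^2$-invariant $(-1)$-class, so the same relation applies to it and gives $\beta(f_j)=\beta(f_j-E_1)+\beta(E_1)=2sK_Y-f_j$ directly, with no ambiguity to resolve. Your method of undetermined coefficients correctly pins down the $c_i$ (namely $c_i=2s$) and narrows $\beta(f_j)$ to the pair $\{2sK_Y-f_1,\,2sK_Y-f_2\}$, but it must then discard one global assignment, and that is where your argument does not work as written.

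The two surviving candidates --- $\beta(f_j)=2sK_Y-f_j$ versus the swapped assignment $\beta(f_j)=2sK_Y-f_{3-j}$ --- are \emph{not} exchanged by the relabelling $f_1\leftrightarrow f_2$: both corresponding matrices are symmetric under that relabelling, so ``fixing a labelling'' selects nothing. Nor does the swapped candidate make $\beta$ ``act trivially on $\mathbb{Z}f_1\oplus\mathbb{Z}f_2$'' (in neither candidate does $\beta$ even preserve that sublattice, since $2sK_Y$ involves the $E_i$). The correct way to invoke the hypothesis is: under the swapped assignment one gets $\beta(f_1-f_2)=f_1-f_2$, and $f_1-f_2$ is a nonzero class orthogonal to $K_Y$, hence not a multiple of it, so $\Pic{Y}^{\beta}$ would have rank at least $2$, contradicting $\Pic{Y}^{\beta}=\mathbb{Z}K_Y$. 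Your instinct to use that hypothesis is right, but the sentence carrying the argument must be replaced by this computation. Separately, note that your advertised sanity check $M^2=\mathrm{id}$ actually \emph{fails} for the matrix printed in case (ii) (the $(1,1)$ entry of its square is $7$): your own derivation gives $c_i=2s=-4$ there, i.e.\ $\beta(f_j)=-4K_Y-f_j$ has coefficient $-4$ on each $E_i$, not $-3$ as printed, and $\beta(E_i)=-2K_Y-E_i$ gives the columns $(2,2,\dots)$ as stated. So your method does not reproduce the stated matrix in case (ii) but a corrected version of it; this is harmless only because, as the remark following the proposition records, case (ii) does not actually occur.
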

\begin{rema}
The second case, numerically possible, does not exist (\emph{see} \cite{Do} or \cite{BlaC}).\end{rema}
\begin{proof}
Let $E\subset Y$ be any $(-1)$-curve invariant by $\beta^2$. The divisor $E+\beta(E)$ is invariant by $\beta$ and thus equivalent to $sK_Y$ for some integer $s$. Computing the intersection with $K_Y$ and the self-intersection, we obtain $-2=s(K_Y)^2$ and $-2+2(E\cdot \beta(E))=s^2(K_Y)^2$. This gives two possibilities:

\[\begin{array}{llll}
(i)&(K_Y)^2=2,& s=-1,& E\cdot \beta(E)=2\\
(ii)&(K_Y)^2=1,& s=-2,& E\cdot \beta(E)=3\\
\end{array}\]

In case $(i)$, there are two curves $E_1$, $E_2$ contracted by $\eta_Y$, and $\beta(E_i)=-K_Y-E_i$ for $i=1,2$. Moreover $f_i-E_1$ is also a $(-1)$-curve for $i=1,2$, so $\beta(f_i)=\beta(E_1)+\beta(f_i-E_1)=-K_Y-E_1-K_Y-(f_i-E_1)=-2K_Y-f_i$.

In case $(ii)$, there are three curves $E_1,E_2,E_3$ contracted by $\eta_Y$, and $\beta(E_i)=-2K_Y-E_i$ for $i=1,2,3$.
As before, we find $\beta(f_i)=-4K_Y-f_i$.
\qed\end{proof}

\subsection{Automorphisms of del Pezzo surfaces of order $6$, resp. $4$ -- description of $\alpha$ and $\beta$}

\subsubsection{Automorphisms of del Pezzo surfaces of order $6$}\label{Sec:Order6}\hspace{1mm}
We now give  explicit possibilities for the automorphism $\alpha\in \Aut(X)$ of order $6$.

\noindent {\bf Case I} 
$$X=\left\{(w:x:y:z)\in \p(3,1,1,2)\ \Big\vert\ w^2=z^3+\mu xz^4 +x^6+y^6\right\}$$
$$\alpha((w:x:y:z))=(w: x:-\omega y:z)$$
for some general $\mu\in \C$ so that the surface is smooth and where $\omega=e^{2\im \pi/3}$. The surface is a del Pezzo surface of degree~$1$, and $\alpha$ fixes pointwise the elliptic curve given by $y=0$. When $\mu$ varies, all possible elliptic curves are obtained. The rank of $\Pic{X}^{\alpha}$ is $1$ (\emph{see} \cite[Corollary 6.11]{Do}).

\medskip

\noindent {\bf Case II} 
$$X=\left\{(w:x:y:z)\in \p^3\ \Big\vert\ wx^2+w^3+y^3+z^3+\mu wyz=0\right\},$$
$$\alpha((w:x:y:z))=(w:-x: \omega y: \omega^2 z),$$
where $\mu\in \C$ is such that the cubic surface is smooth. The surface is a del Pezzo surface of degree~$3$, $\alpha^3$ fixes pointwise the elliptic curve given by $x=0$, and $\alpha$ acts on this via a translation of order $3$. When $\mu$ varies, all possible elliptic curves are obtained. The rank of $\Pic{X}^{\alpha}$ is $1$ (\emph{see} \cite[Page 79]{Do}).

\medskip

\noindent {\bf Case III} 
$$X=\left\{(w:x:y:z)\in \p^3\ \Big\vert\ w^3+x^3+y^3+(x+\mu y)z^2=0\right\},$$
$$\alpha((w:x:y:z))=(\omega w:x:y:-z),$$

where $\mu\in \C$ is such that the cubic surface is smooth. The surface is a del Pezzo surface of degree~$3$, $\alpha^3$ fixes pointwise the elliptic curve given by $z=0$, and $\alpha$ acts on it via an automorphism of order $3$ with $3$ fixed points. When $\mu$ varies the birational class of $\alpha$ changes (because the isomorphism class of the curve fixed by $\alpha^2$ changes) but not the isomorphism class of the elliptic curve fixed by $\alpha^3$. The rank of $\Pic{X}^{\alpha}$ is $1$ (\emph{see} \cite[Page 79]{Do}).

\subsubsection{Automorphisms of del Pezzo surfaces of order $4$}\label{Sec:Order4}\hspace{1mm}
We now give  explicit possibilities for the automorphism $\beta\in \Aut(Y)$ of order $4$.
$$Y=\left\{(w:x:y:z)\in \mathbb{P}(2,1,1,1)\ \left\vert\ w^2-x^4=\prod_{i=1}^4yz(y+z)(y+\mu z)=0\right.\right\}$$
$$\beta((w:x:y:z))=(w:\im x:y:z),$$

where $\mu\in \C\backslash \{0,1\}$. The surface is a del Pezzo surface of degree~$2$ and $\beta$ fixes pointwise the elliptic curve given by~$x=0$. When $\mu$ varies, all possible elliptic curves are obtained. The rank of $\Pic{Y}^{\beta}$ is $1$ (\emph{see} \cite[last line of page 67]{Do} or \cite{BlaC}).

There are other possibilities of automorphisms $\beta$ of order $4$ of rational surfaces $Y$ such that $\beta^2$ fixes an elliptic curve, but none for which the rank of $\Pic{Y}^\beta$ is $1$ (\emph{see}  \cite{BlaC}).

\subsection{The map $X_4\dasharrow Y_4$ that conjugates $\alpha^3$ to $\beta^2$}\label{Subsec:Themapwhichconjugates}
We now fix $\alpha\in \Aut(X)$, $\beta\in \Aut(Y)$, automorphisms of order~$6$ and $4$ respectively, which act minimally on del Pezzo surfaces $X$ and $Y$, so that $\alpha^3$ and $\beta^2$ fix (pointwise) elliptic curves $\Gamma_X\subset X$ and $\Gamma_Y\subset Y$, which are isomorphic (as abstract curves).

We denote by $\eta_X\colon X\to X_4$ and $\eta_Y\colon Y\to Y_4$ two birational morphisms to del Pezzo surfaces of degree~$4$, so that~$\alpha^3$ and $\beta^2$ act minimally on $X_4$ and $Y_4$ respectively. We denote by $f_1$, $f_2\in \Pic{X_4}\subset \Pic{X}$, respectively by $f_1'$, $f_2'\in \Pic{Y_4}\subset \Pic{Y}$,  the two divisors corresponding to the two conic bundles invariant by $\alpha^3$, respectively by $\beta^2$.

We will choose two points $q_1$, $q_2\in \eta_X(\Gamma_X)\subset X_4$, and denote by $\tau\colon Z_4\to X_4$ the blow-up of these two points. 

\begin{lemm}\label{LemmConj}
For some good choice of $q_1$, $q_2$, there exists a birational morphism $\tau'\colon Z_4 \to~Y_4$ satisfying the following properties:
\begin{enumerate}
\item
the morphism $\tau'$ is the contraction of the strict transforms of the two irreducible curves equivalent to $f_1$ passing through $q_1$ and $q_2$ onto two points $q_1',q_2'\in \eta_Y(\Gamma_{Y})$;
\item
the map $\varphi=\tau'\tau^{-1}$ conjugates $\alpha^3$ to $\beta^2$ $($i.e. $\varphi \alpha^3=\beta^2\varphi)$;
\item
neither $q_1$ nor $q_2$ is blown-up by $\eta_X$, and neither $q_1'$ nor $q_2'$ is blown-up by $\eta_Y$;
\item 
identifying $f_1,f_2$ with $\tau^{*}(f_1),\tau^{*}(f_2)\in\Pic{Z_4}$ and  $f_1',f_2'$ with $\tau'^{*}(f_1'),\tau'^{*}(f_2')\in\Pic{Z_4}$, we have the following relations in $\Pic{Z_4}$:
\begin{align*}
&f_1=f_1', && f_1'=f_1,\\
&f_2=f_2'+2f_1'-2E_{\tau'}, &&f_2'=f_2+2f_1-2E_\tau,\\
&E_{\tau}=2f_1'-E_{\tau'}, &&E_{\tau'}=2f_1-E_{\tau},
\end{align*} 
where $E_\tau$, $E_{\tau'}\in \Pic{Z_4}$ correspond to the exceptional divisors of $\tau$ and $\tau'$ respectively, which are the sum of two exceptional curves.
\end{enumerate}
\end{lemm}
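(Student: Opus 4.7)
The strategy is a standard Sarkisov link: blow up two well-chosen points of $\eta_X(\Gamma_X)$, then contract two disjoint $(-1)$-curves, and finally identify the resulting pair (surface, involution) with $(Y_4,\beta^2)$ via Lemma~\ref{Lem:dp4fibres}.

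\smallskip

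\textbf{Construction of the link.} First I would take $q_1,q_2\in\eta_X(\Gamma_X)$ in sufficiently general position: lying outside the exceptional locus of $\eta_X^{-1}$ (which already gives the half of condition (3) concerning $q_1,q_2$) and on two distinct smooth fibres of the conic bundle $\pi_1\colon X_4\to\p^1$ of class $f_1$. After the blow-up $\tau\colon Z_4\to X_4$ with exceptional curves $e_1,e_2$ (so $E_\tau=e_1+e_2$), the strict transforms $F_1,F_2$ of these fibres have classes $f_1-e_1,f_1-e_2$. They are disjoint $(-1)$-curves, so by Castelnuovo's criterion they can be contracted simultaneously by a morphism $\tau'\colon Z_4\to Y_4^\circ$ onto a smooth projective surface, giving condition (1) with $q_i':=\tau'(F_i)$.

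\smallskip

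\textbf{Descent of the involution and Picard-group relations.} Since $q_1,q_2$ lie on $\Gamma_X$ and $\alpha^3$ fixes $\Gamma_X$ pointwise, $\alpha^3$ lifts to a regular involution $\widetilde{\sigma}$ on $Z_4$. Because $\alpha^3$ preserves every fibre of $\pi_1$ (Lemma~\ref{Lemm:DescentSigmaDp4}), $\widetilde{\sigma}$ preserves $F_1$ and $F_2$, and therefore descends to an involution $\sigma^\circ$ on $Y_4^\circ$ which fixes pointwise the strict transform of $\Gamma_X$. Setting $f_1':=f_1\in\Pic{Z_4}$, the identity $E_{\tau'}=F_1+F_2=2f_1-E_\tau$ is immediate, and the dual identity $E_\tau=2f_1'-E_{\tau'}$ follows by symmetry. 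I would then set $f_2':=f_2+2f_1-2E_\tau$ and verify by direct computation that $(f_2')^2=0$, $f_2'\cdot f_1'=2$, and $f_2'\cdot F_i=0$, which identifies $f_2'$ with the class of the second invariant conic bundle on $Y_4^\circ$ and establishes the remaining relations of (4). Counting $(K_{Y_4^\circ})^2=(K_{X_4})^2-2+2=4$ and invoking Lemma~\ref{Lemm:DescentSigmaDp4}, we obtain that $(Y_4^\circ,\sigma^\circ)$ is a del Pezzo pair of degree~$4$ whose fixed elliptic curve is isomorphic to $\Gamma_X$ (hence to $\Gamma_Y$).

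\smallskip

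\textbf{Matching $(Y_4^\circ,\sigma^\circ)$ with $(Y_4,\beta^2)$.} This is the main obstacle. By Lemma~\ref{Lem:dp4fibres} applied to the conic bundle of class $f_1'$, it suffices to produce an identification of the base $\p^1$ under which the four singular fibres of $\pi_1^\circ$ coincide with those of $\pi_1'$, and to match the image of the intersection point of a chosen pair of $(-1)$-sections on each side. The singular fibres of $\pi_1^\circ$ are inherited from those of $\pi_1$ on $X_4$, and their images in $\p^1$ are the branch points of the double cover $\pi_1|_{\Gamma_X}\colon\Gamma_X\to\p^1$; analogously for $\pi_1'$ on $Y_4$. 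Because $\Gamma_X\simeq\Gamma_Y$, these two $4$-point configurations in $\p^1$ have the same cross-ratio (it is determined by the $j$-invariant), so an automorphism of $\p^1$ identifies them. The remaining two-dimensional freedom in the choice of $q_1,q_2\in\Gamma_X$ then suffices to adjust the image of the section intersection to the required point of $\p^1$, and a generic choice within the locus giving the correct singular-fibre configuration simultaneously keeps $q_1',q_2'$ off the exceptional locus of $\eta_Y$ (the remaining part of condition~(3)). Lemma~\ref{Lem:dp4fibres} then provides an isomorphism $Y_4^\circ\xrightarrow{\sim}Y_4$ conjugating $\sigma^\circ$ to $\beta^2$, whence condition~(2). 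The technical heart of the argument is this last matching step; the blow-up/contract construction itself and the Picard computations are essentially formal.
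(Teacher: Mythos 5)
Your construction is the same as the paper's: blow up two general points of $\eta_X(\Gamma_X)$ on distinct smooth fibres of the conic bundle of class $f_1$, contract the two strict transforms (classes $f_1-e_1$, $f_1-e_2$), descend the involution, and verify the Picard relations by direct intersection computations (which all check out, and are equivalent to the paper's derivation via adjunction and $-K_{X_4}=f_1+f_2$). The one place where the argument is not actually carried out is the step you yourself flag as the technical heart: the claim that ``the remaining two-dimensional freedom in the choice of $q_1,q_2$ suffices to adjust the image of the section intersection to the required point of $\p^1$,'' together with the claim that a generic such choice also avoids all the bad loci. As written this is an assertion, not a proof: you have not exhibited the $(-1)$-sections whose intersection point you intend to move, nor shown that the map from pairs $(q_1,q_2)$ to the resulting point of $\p^1$ actually hits the target value determined by $(Y_4,\pi',\beta^2)$, nor that the fibre over that target value is not entirely contained in the locus of pairs you had to exclude (pairs lying over singular fibres or over fibres meeting points blown up by $\eta_X$ or $\eta_Y$).

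The paper closes exactly this gap with an explicit construction. It contracts a $(-1)$-section $E_1$ of $\pi$ and four fibre components to get $X_4\to\p^2$, and for each pair $(q_1,q_2)$ takes $C$, $D$ to be the strict transforms of the conics through $p_1,p_2,p_3,q_1,q_2$ and $p_1,p_4,p_5,q_1,q_2$ respectively; these are sections of $\pi$ meeting in $q_1$, $q_2$ and a third point $r$, whose strict transforms on $W$ are $(-1)$-sections meeting in the single point $\varphi(r)$, so by Lemma~\ref{Lem:dp4fibres} the isomorphism class of $(W,\pi_W,\varphi\alpha^3\varphi^{-1})$ is governed by $\pi(r)\in\p^1$. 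Fixing $q_1$ and prescribing $r$ on the fibre dictated by $(Y_4,\pi',\beta^2)$ determines $q_2$, and the resulting locus of admissible pairs is an irreducible curve of bidegree $(1,1)$ in $\Gamma_X\times\Gamma_X$; since the excluded locus $V$ is a finite union of curves of bidegree $(1,0)$ or $(0,1)$, this $(1,1)$-curve is not contained in $V$ and a general point of it does the job. Without some argument of this kind --- identifying the invariant concretely and showing the good locus is nonempty --- your proof of condition~(2) (and of the $\eta_Y$ half of condition~(3)) is incomplete.
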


\begin{proof}
Denote by $\pi\colon X_4\to \p^1$ and $\pi'\colon Y_4\to \p^1$ the morphisms whose fibres are $f_1$ and~$f_1'$ respectively. As it was already observed in the proof of Lemma~\ref{Lemm:DescentSigmaDp4}, both $\pi$, $\pi'$ are conic bundles, with four singular fibres, and the four singular fibres correspond to the four branch points of the double coverings $\pi\colon \eta_X(\Gamma_X)\to \p^1$ and $\pi'\colon \eta_Y(\Gamma_Y)\to \p^1$. Since $\Gamma_X$ and $\Gamma_Y$ are isomorphic elliptic curves, we can assume that the four points are the same for both morphisms. Denote by~$\Delta\subset \p^1$ the union of the image by $\pi$ of the points blown-up by $\eta_X$, the image by $\pi'$ of the points blown-up by $\eta_Y$, and the points corresponding to singular fibres of $\pi$ (or $\pi'$).

We define a closed subset $V\subset \Gamma_X\times \Gamma_X$ consisting of pairs $(q_1,q_2)$ that we "do not want", and denote by $U$ its complement. The closed subset $V$ is the union of the pairs $(q_1,q_2)$ such that $\pi(q_1)$ or $\pi(q_2)$ belongs to $\Delta$. Observe that $V$ is a finite union of curves of $\Gamma_X\times\Gamma_X$ (of bidegree~$(0,1)$ or $(1,0)$).

Choosing $(q_1,q_2)\in U$, such that $q_1,q_2$ are on distinct fibres of $\pi$, we can define a birational morphism $\tau'\colon Z_4\to W$ which contracts the strict transforms of the fibres of $\pi$ which pass through $q_1$ and $q_2$. The map $\varphi=\tau'\tau^{-1}$ conjugates~$\alpha^3$ to a biregular automorphism of $W$, which preserves any fibre of the conic bundle $\pi_W=\pi\varphi^{-1}$. In fact, $\varphi$ is a sequence of two elementary links of conic bundles. 
 It remains to show that for a good choice of $(q_1,q_2)\in U$, the triplet $(W,\pi_W, \varphi\alpha^3 \varphi^{-1})$ is isomorphic to $(Y,\pi',\beta^2)$, using Lemma~\ref{Lem:dp4fibres}.

Let $E_1\subset X_4$ be a $(-1)$-curve which is a section of $\pi$; we fix a birational morphism $\mu_X\colon X_4\to \p^2$ which contracts $E_1$ and all $(-1)$-curves lying on fibres of $\pi$ that do not intersect $E_1$, which we call $E_2,\dots,E_5$. The fibres of $\pi$ correspond to lines of $\p^2$ passing through the point $p_1=\mu_X(E_1)$, the curves equivalent to $f_2$ correspond to conics passing through~$p_2=\mu_X(E_2),\dots,p_5=\mu_X(E_5)$. For any pair $(q_1,q_2)$, we denote by $C\subset X_4$ (respectively $D\subset X_4$) the strict transform of the conic of $\p^2$ passing through $p_1,p_2,p_3,q_1,q_2$ (respectively $p_1,p_4,p_5,q_1,q_2$), and denote by $C',D'\subset W$ their strict transforms by $\varphi$. The curves $C$, $D$ are sections of $\pi$ and intersect into three points: $q_1$, $q_2$, $r\in X_4$. The cur\-ves~$C'$,~$D'$ are sections of $\pi_W$ of self-intersection $-1$, and intersect into one point, which is~$\varphi(r)\in W$. The isomorphism class of the triplet $(W,\pi_W,\varphi\alpha^3\varphi^{-1})$ is given by~$\pi_W(\varphi(r))\in~\p^1$ (Lemma~\ref{Lem:dp4fibres}), equal to $\pi(r)\in\p^1$. Fixing $q_1$, and choosing one of the two possibilities for~$r$, on the fibre given by the isomorphism class of $(Y,\pi_Y,\beta^2)$, the curves~$C$,~$D$ can be chosen as the conics passing respectively through $p_1$, $p_2$, $p_3$, $q_1$, $r$ and~$p_1$, $p_4$, $p_5$, $q_1$, $r$, so $q_2$ is uniquely defined. This gives us two irreducible curves $V_1,V_2$ of bidegree~$(1,1)$ in $\Gamma_X\times \Gamma_X$, which are thus not contained in $V$. Choosing a general point of~$V_1\cap U$, the triplet $(W,\pi_W,\varphi \alpha^3\varphi^{-1})$ is isomorphic to $(Y,\pi_Y,\beta^2)$. 

The fact that $\eta_X$ does not blow-up $q_1$ or $q_2$ and that $\eta_Y$ does not blow-up $q_1'$ or $q_2'$ is given by the fact that $\pi(q_i)=\pi'(q_i')\notin\Delta$ for $i=1,2$.

It remains to show the relations in $\Pic{Z_4}$. The equalities $f_1=f_1'$ and $E_\tau+E_{\tau'}=2f_1$ are given by the construction of $\tau$, $\tau'$. The adjunction formula, and the fact that $-K_{X_4}=f_1+f_2$, $-K_{Y_4}=f_1'+f_2'$ yields $-K_{Z_4}=f_1+f_2-E_\tau=f_1'+f_2'-E_{\tau'}$ and the remaining equalities.
\qed\end{proof}

\subsection{The hyperbolic embeddings}
Now we have the map $\varphi\colon X_4\dasharrow Y_4$ constructed in  \S\ref{Subsec:Themapwhichconjugates} above, which conjugates~$\alpha^3$ to $\beta^2$, the group generated by $\alpha$ and $\beta$ is a subgroup of the Cremona group, which is isomorphic to $\mathrm{SL}(2,\z)$ if and only if there is no other relation than the obvious $1=\alpha^6=\beta^4=\alpha^3\beta^2$ which arise by construction. We compute the action of $\alpha$, $\beta$ on $\Pic{X}$, $\Pic{Y}$, and on a surface $Z$ which dominates $X$, $Y$, where both $\alpha$, $\beta$ act. This surface exists if the group generated by the action of both maps on the elliptic curve fixed by $\alpha^3$ and $\beta^2$ is a finite subgroup of automorphisms of the curve (which is true for example when either $\alpha$ or $\beta$ fixes the curve), and if it does not exist, we can also compute the action on the limit of the Picard groups obtained.

\begin{equation}\label{Eq:DiagrammFond}
\xymatrix{
& Z\ar[ld]_{\pi_X}\ar[rd]^{\pi_Y} &\\
X\ar[d]_{\eta_X} & Z_4\ar[ld]_{\tau}\ar[rd]^{\tau'} & Y\ar[d]^{\eta_Y}\\
X_4\ar@{-->}[rr]_{\varphi} & & Y_4
}\end{equation}

\begin{prop}
For $j=1,2,3$, choose $\alpha\in \Aut(X)$ as an automorphism of order $6$ of a del Pezzo surface $X$, which is respectively given in case $I$, $II$ or $III$ of  $\S\ref{Sec:Order6}$, such that $\alpha^3$  fixes pointwise an elliptic curve $\Gamma_X$,  and choose $\beta$ as an automorphism of order $4$ of a del Pezzo surface~$Y$ of degree~$2$, which fixes pointwise an elliptic curve $\Gamma_Y$ isomorphic to  $\Gamma_X$, $($which implies that $\alpha^3$ and $\beta^2$ are conjugate$)$.  This yields, with the above construction, a hyperbolic embedding $\theta_{h,j}\colon \SL(2,\z)\subset \Aut(Z)\subset \Bir(Z)\simeq\Bir(\p^2)$ which preserves an elliptic curve $\Gamma$ isomorphic to $\Gamma_X$ and $\Gamma_Y$.

The surface $Z$ is obtained by blowing-up respectively  $12$, $10$ and $10$ points on a smooth cubic curve of $\p^2$ isomorphic to $\Gamma$, and the action of $\theta_{h,i}( \SL(2,\z))$ on $\Gamma$ is respectively the identity, a translation of order $3$ and an automorphism of order $3$ with fixed point. There is no curve of $Z$ distinct from $\Gamma$ which is invariant by $\theta_{h,i}( \SL(2,\z))$. The curve $\Gamma$ can be chosen to be any elliptic curve for $j=1,2$.
\end{prop}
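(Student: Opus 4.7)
The proof breaks into four parts: (i) verifying $\theta_{h,j}$ is a well-defined homomorphism; (ii) showing that $\alpha, \beta$ act biregularly on the same surface $Z$, of the claimed form; (iii) establishing injectivity and hyperbolicity via a ping-pong argument on $\Pic{Z}$; and (iv) identifying the invariant elliptic curve and proving its uniqueness. For (i), the presentation $\SL(2,\z) = \langle a, b \mid a^4 = b^6 = 1,\ a^2 = b^3\rangle$, with $a^2 = b^3$ central, allows one to set $\theta_{h,j}(a) = \beta$ and $\theta_{h,j}(b) = \alpha$: the orders match the constructions in Sections~\ref{Sec:Order6} and~\ref{Sec:Order4}, and $\alpha^3 = \beta^2$ holds on $Z$ by the very definition of $\varphi$ in Lemma~\ref{LemmConj}. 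For (ii), $Z$ is the common resolution arising in diagram~(\ref{Eq:DiagrammFond}), obtained from $Z_4$ by blowing up the strict transforms of the $\eta_X$- and $\eta_Y$-exceptional divisors, which are disjoint from $\{q_1, q_2, q_1', q_2'\}$ by Lemma~\ref{LemmConj}(3). A direct count of $(K_Z)^2$ from either side gives $-3$ in case~I and $-1$ in cases~II,~III, so $Z$ is $\p^2$ blown up at $12, 10, 10$ points respectively, and these points lie on the smooth cubic image of the strict transform $\widetilde\Gamma$. To guarantee that both $\alpha$ and $\beta$ descend biregularly to $Z$, the blown-up points must form unions of orbits of $\langle\alpha|_{\widetilde\Gamma}\rangle$ and $\langle\beta|_{\widetilde\Gamma}\rangle$; this can be arranged by choosing $q_1, q_2$ in the open set $U$ of Lemma~\ref{LemmConj} compatibly with the torsion action on $\widetilde\Gamma$ (trivial in case~1, cyclic of order~3 in cases~2 and~3).

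For (iii), I compute the matrices of $\alpha$ and $\beta$ acting on $\Pic{Z}$ by combining Propositions~\ref{Matrices6}, \ref{Matrices4} and Lemma~\ref{LemmConj}(4), together with the pullbacks through the additional blow-ups $Z \to Z_4$. Equipping $K_Z^\perp \otimes \mathbb{R}$ with the Lorentzian intersection form, I apply the Tits-style ping-pong lemma: exhibit disjoint open cones $C_\alpha, C_\beta$ with $\beta(C_\alpha \setminus \{0\}) \subset C_\beta$ and $\alpha^{\pm 1}(C_\beta \setminus \{0\}) \subset C_\alpha$, built from the expanding eigenvectors of a concrete hyperbolic product of $\alpha$ and $\beta$. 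This simultaneously yields injectivity---the quotient by the central involution $\beta^2 = \alpha^3$ is the free product $\Z{2} * \Z{3} \cong \PSL(2,\z)$, so $\theta_{h,j}$ is an embedding---and hyperbolicity: every element of infinite order in $\SL(2,\z)$ is a non-torsion reduced word alternating between the two factors, hence acts loxodromically on the associated hyperbolic space, with spectral radius strictly greater than $1$ on $\Pic{Z}$; this spectral radius coincides with the first dynamical degree of the corresponding birational map by Cantat's theorem.

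For (iv), the common strict transform $\widetilde\Gamma$ of $\Gamma_X$ and $\Gamma_Y$ on $Z$ is preserved by both $\alpha$ and $\beta$; it is isomorphic to $\Gamma_X \cong \Gamma_Y$, and the induced actions are those prescribed in Section~\ref{Sec:Order6} (together with $\beta$ acting trivially on $\Gamma_Y$). Uniqueness follows from the Picard action: any other invariant irreducible curve would provide an $\SL(2,\z)$-fixed class in $\Pic{Z} \otimes \mathbb{R}$, but the ping-pong of (iii) forces the fixed subspace to be spanned by $K_Z$ and $[\widetilde\Gamma]$, which contains no further effective irreducible class. The orbit dichotomy is then standard: the Zariski closure of any orbit is an invariant subvariety, hence either zero-dimensional (finite), equal to $\widetilde\Gamma$, or the whole $Z$ (Zariski-dense). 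The freedom in the elliptic curve for $j=1,2$ comes from varying the parameter $\mu$ in the defining equations of $X$ (and correspondingly of $Y$). The principal technical obstacle is step (iii): exhibiting the ping-pong cones requires a concrete spectral analysis of moderate-sized matrices and careful bookkeeping through diagram~(\ref{Eq:DiagrammFond}), but once one hyperbolic element with explicit eigendata is identified, the remaining inclusions and the free-product conclusion are essentially routine.
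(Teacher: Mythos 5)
Your overall architecture (the surface $Z$ from diagram~(\ref{Eq:DiagrammFond}), the point count via $(K_Z)^2$, the action on $\Pic{Z}$ assembled from Propositions~\ref{Matrices6}, \ref{Matrices4} and Lemma~\ref{LemmConj}, the identification of the unique invariant curve with the anticanonical strict transform) matches the paper's. The genuine gap is in your step~(iii). You replace the paper's argument by an appeal to a Tits-style ping-pong with cones in $K_Z^{\perp}\otimes\mathbb{R}$ and declare that, once one hyperbolic product is found, ``the remaining inclusions and the free-product conclusion are essentially routine.'' Two problems. First, the cone inclusions are not exhibited, and constructing cones $C_\alpha$, $C_\beta$ that are simultaneously compatible with \emph{all} nontrivial powers of $\alpha$ and of $\beta$ is exactly the nontrivial quantitative content of the proof; the paper supplies it in the form of an explicit induction, in a $\beta$-eigenbasis of $K^{\perp}$, establishing the inequalities $a_n,b_n,c_n,\ell_n\ge 0$, $\ell_n>\tfrac{6}{5}c_n$, $\ell_n>2a_n$, $\ell_n\ge(\tfrac{5}{3})^n$ (resp.\ $\ell_n>c_n$, $\ell_n\ge 10^n$) for $H_n=\rho_{i_n}\cdots\rho_{i_1}(H)$ with $\rho_1=\alpha\beta$, $\rho_2=\alpha^2\beta$. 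Second, even granting the cone inclusions, the bare ping-pong lemma yields only the free-product structure, i.e.\ injectivity; it does \emph{not} by itself exclude that some infinite-order word acts as a parabolic isometry (spectral radius $1$ with unbounded linear or quadratic growth). To conclude hyperbolicity of every infinite-order element you need a uniform expansion estimate along reduced words, which is precisely what the paper's inequality $\ell_{km}\ge(\tfrac{5}{3})^{km}$ delivers (giving dynamical degree $\ge(\tfrac{5}{3})^k$ for a word of length $k$). As written, your proposal defers the entire analytic core of the proof.

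Two smaller remarks. In your step~(iv), note that $[\widetilde\Gamma]=-K_Z$ in $\Pic{Z}$ (the blown-up points all lie on the cubic), so the common fixed subspace you describe is a line, not a plane; the paper verifies directly from the matrices that the fixed part of $W$ is $\mathbb{R}K_Z$. In your step~(ii), the claim that the blown-up points ``can be arranged'' to be unions of orbits of $\alpha|_{\widetilde\Gamma}$ deserves more care in cases $j=2,3$, where $\alpha$ acts on $\Gamma_X$ with order $3$ and the four points of $X$ blown up by $\pi_X$ must nevertheless be permuted; this compatibility is asserted rather than proved in your write-up (the paper is equally terse here, so I flag it only as a point to address rather than as a divergence from the source).
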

\begin{proof}
In case $j=1$, we take $(f_1,f_2,E_1,E_2,E_3)$ as a basis of $\Pic{X}^{\alpha^3}$, where $E_1,E_2,E_3$ are the three curves contracted by $\eta_X$, and $f_1,f_2$ correspond to the fibres of the two conic bundles invariant by $\alpha^3$ on $X_4$. Applying Proposition~\ref{Matrices6}, $\alpha$ preserves the submodule generated by~$f_1$, $f_2$, $E$, where $E=E_1+E_2+E_3$ is the divisor contracted by $\eta_X$, and its action relatively to this basis is

$$\left[\begin{array}{rrrrrrrrr}
1&3&3\\
3&4&6\\
-2&-4&-5\end{array}\right].$$

In cases $j=2,3$, we take $(f_1,f_2,E)$ as a basis of $\Pic{X}^{\alpha^3}$, where $E=E_1$ is the (irreducible) divisor contracted by $\eta_X$, and $f_1,f_2$ correspond to the fibres of the two conic bundles invariant by~$\alpha^3$ on $X_4$. Applying Proposition~\ref{Matrices6}, the action of $\alpha$ on~$\Pic{X}^{\alpha^3}$ relatively to this basis is
$$\left[
\begin{array}{rrrrr}
0& 1 & 0 \\
1 & 1 & 1\\
0 & -2 & -1
\end{array}
\right]$$
(for a good choice of $f_1,f_2,E$).

In each of the three cases, we take $(f_1',f_2',E_1',E_2')$ as a basis of $\Pic{Y}^{\beta^2}$, where $E_1',E_2'$ are the divisors contracted by $\eta_Y$, and $f_1',f_2'$ correspond to the fibres of the two conic bundles invariant by $\beta^2$ on $Y_4$. Applying Proposition~\ref{Matrices4},  $\beta$ preserves the submodule generated by $f_1',f_2',E'$, where $E'=E_1'+E_2'$ is the divisor contracted by $\eta_Y$ and its action relatively to this basis is
\[\left[
\begin{array}{rrrrr}
1& 2 & 2 \\
2 & 1 & 2 \\
-2 & -2 & -3
\end{array}
\right].\]

We denote  by $\pi_X\colon Z\to X$ the blow-up of the points corresponding to the points blown-up by $\tau$ and $\eta_Y$ (\emph{see} Dia\-gram~(\ref{Eq:DiagrammFond})), and denote again their exceptional divisors by $E_\tau$ and~$E'$. Similarly, we denote by $\pi_Y\colon Z\to Y$ the blow-up of the points corresponding to the two points blown-up by $\tau'$ and $\eta_X$, and denote again their exceptional divisors by~$E_{\tau'}$ and $E$. Since $X_4$ and $Y_4$ are del Pezzo surfaces of degree~$4$, they are obtained by blowing-up $5$ points of $\p^2$, all lying on the smooth cubic being the image of~$\Gamma_X$ or $\Gamma_Y$. This implies that~$Z$ is the blow-up of $12$ points of $\p^2$ if $i=1$ and of $10$ points of $\p^2$ if $i=2,3$, all points belonging to the smooth cubic curve.  Moreover, both $\alpha$ and $\beta$ lift to automorphisms of $Z$.

We denote by the same name the pull-backs of the divisors $f_1$, $f_2$, $E$, $E'$,  $E_\tau$  on~$Z$. Recall that $E_\tau$ is the sum of two $(-1)$-curves.  The action of $\alpha$ in case $j=1$, $\alpha$ in case $j\in\{2,3\}$ and $\beta$ in each case on the subvectorspace $W$ of $\Pic{Z}\otimes \mathbb{R}$ generated by $(f_1,f_2,E,E',E_\tau)$ are respectively
$$\left[\begin{array}{rrrrrrrrr}
1&3&3&0&0\\
3&4&6&0&0\\
-2&-4&-5&0&0\\
0&0&0&1&0\\
0&0&0&0&1\end{array}\right], 
\left[\begin{array}{rrrrrrrrr}
0&1&0&0&0\\
1&1&1&0&0\\
0&-2&-1&0&0\\
0&0&0&1&0\\
0&0&0&0&1\end{array}\right]
\mbox{ and }
\left[\begin{array}{rrrrrrrrr}
 5&10& 0 & 6 & 8\\
 2& 5&  0& 2 & 4\\
 0& 0& 1& 0& 0\\
-2&-6& 0& -3& -4\\
-4&-8& 0&-4 & -7\end{array}\right]
$$
relatively to this basis. The first two matrices are obtained because $\alpha$ fixes the curve~$\Gamma_X$, and because $E'$, $E_\tau$ correspond to points of $\Gamma_X$ which are not blown-up by $\eta_X$ (Lemma~\ref{LemmConj}). The second matrix is obtained applying again Lemma~\ref{LemmConj}, which yields the equations $f_1=f_1'$, $f_2=f_2'+2f_1'-2E_{\tau'}$, $E_\tau=2f_1'-E_{\tau'}$. One easily checks that the only elements of~$W$ which are fixed by $\alpha$ and $\beta$ are the multiples of the canonical divisor, correspon\-ding to $[1,1,-1,-1,-1]$. This implies that any curve $C\subset Z$ invariant by the group is a multiple of the elliptic curve  $\Gamma_Z\subset Z$ (strict transform of $\Gamma_X$ and $\Gamma_Y$). This curve having negative self-intersection, $C$ has to be equal to $\Gamma_Z$.

By construction, we have $\alpha^6=\beta^4=1$ and $\beta^2=\alpha^3$. We have to prove that no other relation holds, and that any element of infinite order corresponds to a hyperbolic element of $\Aut(Z)$. Writing $\rho_1=\alpha\beta$ and~$\rho_2=\alpha^2\beta$, this corresponds to show that for any sequence $(i_1,\dots,i_n)$ with $i_k\in \{1,2\}$, the element $\rho_{i_n}\cdot \dots \cdot\rho_{i_1}$ is a hyperbolic element of $\Aut(Z)$.

To show this, we look at the action of $\alpha,\beta$ on the orthogonal $W_0=K^\perp$ of the canonical divisor $K\in W\subset \Pic{Z}$ in $W$. We choose a basis of $W_0$, made of orthogonal eigenvectors of $\beta$.

If $j=1$, the basis is 
$<[1,0,0,-1,0],$ $[2,1,0,-1,-2],$ $[3,1,-2,-1,-2]$, $[4,2,-2,-2,-3]>$, which has signature $<-2,-2,-2,2>$ and the actions of $\alpha,\alpha^2,\beta$ relatively to it are respectively
$$\left[\begin{array}{rrrr}0&-1&-2&-2\\-2&-2&-3&-4\\-1&0&-2&-2\\2&2&4&5\end{array}\right], \left[\begin{array}{rrrr}0&-2&-1&-2\\-1&-2&0&-2\\-2&-3&-2&-4\\2&4&2&5\end{array}\right], \left[\begin{array}{rrrr}-1&0&0&0\\0&-1&0&0\\0&0&1&0\\0&0&0&1\end{array}\right].$$
We denote by $H$ the fourth basis vector, which is the only one with positive square, and compute by induction on $n$ the vector $H_n=\rho_{i_n}\cdot \dots \cdot\rho_{i_1}(H)$ for $n\ge 0$ (with $H_0=H$). Writing $H_n=\left[\begin{array}{r}-a_n\\ -b_n\\ -c_n\\ \ell_n\end{array}\right]$, we prove by induction on $n$ the following inequalities:
\begin{equation}\label{IneqInduc}
 \begin{array}{rcl}
 a_n,\,b_n,\,c_n,\,\ell_n&\ge& 0\\
 \ell_n&>&\frac{6}{5}c_n\\
 \ell_n&>&2a_n\\
 \ell_n&\ge &(\frac{5}{3})^n,\end{array}\end{equation}
where the last one will yield the result, implying that $\rho_{i_k}\cdot \dots\cdot \rho_{i_1}$  is a hyperbolic element of $\Aut(Z)$ of dynamical degree~$\ge (\frac{5}{3})^k$.

Note that $(\ref{IneqInduc})$ is easily checked for $n=0$, since $\ell_0=1$, $a_0=b_0=c_0=0$. We assume the result true for $n$ and prove it for $n+1$. We have $H_{n+1}=\rho_{i_{n+1}}(H_{n})=\alpha^{i_{n+1}}\beta(H_{n})$, which is equal to 

$$\left[\begin{array}{r}-b_{n}+2c_n-2\ell_n\\ -2a_n-2b_n+3c_n-4\ell_n\\ -a_n+2c_n-2\ell_n\\ 2a_n+2b_n-4c_n+5\ell_{n}\end{array}\right]\mbox{ or }
\left[\begin{array}{r}-2b_n+c_n-2\ell_n\\ -a_n-2b_n-2\ell_n\\ -2a_n-3b_n+2c_n-4\ell_n\\ 2a_n+4b_n-2c_n+5\ell_n\end{array}\right].$$

 We deduce  the inequalities  $a_{n+1}$, $b_{n+1}$, $c_{n+1}$, $\ell_{n+1}\ge 0$ directly from $a_n$, $b_n\ge 0$ and $\ell_n\ge c_n\ge 0$. Computing $\ell_{n+1}-2a_{n+1}=\ell_n+2a_{2n}$, we obtain $\ell_{n+1}>2a_{n+1}$. We compute then $5\ell_{n+1}-6c_{n+1}$ to see that it is positive,  and obtain either $13\ell_n-8c_n+4a_n+10b_n>(13-8\cdot \frac{5}{6})\ell_n+4a_n+10b_n>0$ or $\ell_n+2c_n+2b_n-2a_n>0$. To get (\ref{IneqInduc}), it remains to see that
$\ell_{n+1}\ge 5\ell_n-4c_n=\frac{5}{3}\ell_n+ 4(\frac{5}{6}\ell_n-c_n)>\frac{5}{3}\ell_n\ge (\frac{5}{3})^{n+1}.$\\

For  $j=2,3$, the situation is similar, with other data. The basis is now
$<[1,0,0,-1,0],$ $[2,1,0,-1,-2],$ $[8,2,-2,-2,-5]$, $[9,3,-2,-3,-6]>$, which has signature $<-2,-2,-6,6>$ and the actions of $\alpha,\alpha^2,\beta$ relatively to it are respectively
$$\left[\begin{array}{rrrr}-2&-9&-18&-24\\-6&-20&-36&-51\\-6&-18&-35&-48\\7&22&42&58\end{array}\right], \left[\begin{array}{rrrr}-2&-6&-18&-21\\-9&-20&-54&-66\\-6&-12&-35&-42\\8&17&48&58\end{array}\right], \left[\begin{array}{rrrr}-1&0&0&0\\0&-1&0&0\\0&0&1&0\\0&0&0&1\end{array}\right].$$
We again denote by $H$ the fourth basis vector, which is the only one with positive square, and compute by induction on $n$ the vector $H_n=\rho_{i_n}\cdot \dots \cdot\rho_{i_1}(H)$ for $n\ge 0$ (with $H_0=H$). Writing $H_n=\left[\begin{array}{r}-a_{n}\\ -b_{n}\\ -c_{n}\\ \ell_{n}\end{array}\right]$, we prove by induction on $n$ the following inequalities:
\begin{equation}\label{IneqInduc2}
 \begin{array}{rcl}
 a_{n},b_{n},c_{n},\ell_n&\ge& 0\\
 \ell_n&>&c_{n}\\
 \ell_n&\ge &10^n,\end{array}\end{equation}
where the last one will yield the result, implying that $\rho_{i_k}\cdot \dots\cdot \rho_{i_1}$  is a hyperbolic element of $\Aut(Z)$ of dynamical degree~$\ge 10^k$.

Again, $(\ref{IneqInduc2})$ is easily checked for $n=0$, since $\ell_0=1$, $a_0=b_0=c_0=0$. We assume the result true for $n$ and prove it for $n+1$. We have $H_{n+1}=\rho_{i_{n+1}}(H_{n})=\alpha^{i_{n+1}}\beta(H_{n})$, which is equal to 

$$\left[\begin{array}{r}-2a_n-9b_n+18c_n-24\ell_n\\
 -6a_n-20b_n+36c_n-51\ell_n\\ -6a_n-18b_n+35c_n-48\ell_n\\
  7a_n+22b_n-42c_n+58\ell_n\end{array}\right]\mbox{ or }
\left[\begin{array}{r}-2a_n-6b_{n}+18c_n-21\ell_n\\
 -9a_n-20b_n+54c_n-66\ell_n\\ 
 -6a_n-12b_n+35c_n-42\ell_n\\
  8a_n+17b_n-48c_n+58\ell_n\end{array}\right].$$

 We deduce  the inequalities  $a_{n+1}$, $b_{n+1}$, $c_{n+1}$, $\ell_{n+1}\ge 0$ directly from $a_n$, $b_n\ge 0$ and $\ell_n\ge c_n\ge 0$. Since $\ell_{n+1}-c_{n+1}$ is either equal to $a_n+4b_n-7b_n+10\ell_n$ or to $2a_n+5b_n-13c_n+16\ell_n$, it is positive. To get (\ref{IneqInduc2}), it remains to see that
$$\ell_{n+1}\ge 58\ell_n-48c_n=10 \ell_n+48(\ell_n-c_n)\ge 10\ell_n\ge (10)^{n+1}.$$
\qed
\end{proof}

\vspace*{8mm}

\end{document}